\newtheorem{lemma}{Lemma}
\newtheorem{theorem}{Theorem}
\newtheorem{proposition}{Proposition}
\newtheorem{corollary}{Corollary}
\theoremstyle{remark}
\newtheorem{remark}{Remark}
\theoremstyle{definition}
\newtheorem{definition}{Definition}
\DeclareMathOperator{\dom}{dom}
\DeclareMathOperator{\id}{id}
\DeclareMathOperator*{\wlim}{w-lim}
\DeclareMathOperator{\erg}{erg}
\DeclareMathOperator{\diag}{diag}
\DeclareMathOperator{\SGS}{SGS}
\DeclareMathOperator{\SSOR}{SSOR}
\DeclareMathOperator{\TV}{TV}
\DeclareMathOperator{\Div}{div}
\DeclareMathOperator{\res}{res}
\DeclareMathOperator{\est}{est}
\newcommand{\grad}{\nabla}
\newcommand{\lookUp}[1]{}
\newcommand{\seq}[1]{\{{#1}\}}
\newcommand{\sett}[1]{\{{#1}\}}
\newcommand{\abs}[2][]{|{#2}|_{#1}}
\newcommand{\Bigabs}[2][]{\Bigl|{#2}\Bigr|_{#1}}
\newcommand{\norm}[2][]{\|{#2}\|_{#1}}
\newcommand{\bignorm}[2][]{\bigl\|{#2}\bigr\|_{#1}}
\newcommand{\scp}[3][]{\langle{#2},{#3}\rangle_{#1}}
\newcommand{\Bigscp}[3][]{\Bigl\langle{#2},{#3}\Bigr\rangle_{#1}}
\newcommand{\weakto}{\rightharpoonup}
\newcommand{\subgrad}{\partial}
\newcommand{\RR}{\mathbf{R}}
\newcommand{\NN}{\mathbf{N}}
\newcommand{\mF}{\mathcal{F}}
\newcommand{\mG}{\mathcal{G}}
\newcommand{\mK}{\mathcal{K}}
\newcommand{\mH}{\mathcal{H}}
\newcommand{\mM}{\mathcal{M}}
\newcommand{\mA}{\mathcal{A}}
\newcommand{\mL}{\mathcal{L}}
\newcommand{\mO}{\mathcal{O}}
\newcommand{\mI}{\mathcal{I}}
\newcommand{\gap}{\mathfrak{G}}
\newcommand{\err}{\mathfrak{E}}
\DeclareMathAlphabet{\mathbfit}{OML}{cmm}{b}{it}
\newcommand{\mo}{\mathbfit{o}}
\begin{document}

\title{Accelerated Douglas--Rachford methods
for the solution
of convex-concave saddle-point problems}
\author{Kristian Bredies\thanks{Institute for Mathematics and
Scientific Computing, University of Graz, Heinrichstra\ss{}e 36, A-8010 Graz,
Austria. Email: \href{mailto:kristian.bredies@uni-graz.at}{kristian.bredies@uni-graz.at}. The Institute for Mathematics and Scientific Computing is
a member of NAWI Graz (\url{http://www.nawigraz.at/}).} 
\and Hongpeng Sun\thanks{Institute for Mathematical Sciences,
Renmin University of China, No.~59, Zhongguancun Street, Haidian District,
Beijing, 100872 Beijing, People's Republic of China.
Email: \href{mailto:hpsun@amss.ac.cn}{hpsun@amss.ac.cn}.}}

\maketitle

\begin{abstract}
  We study acceleration and preconditioning strategies for
  a class of Douglas--Rachford methods aiming at the solution
  of convex-concave saddle-point problems associated with
  Fenchel--Rockafellar duality. While the basic iteration converges
  weakly in Hilbert space with $\mO(1/k)$ ergodic convergence of
  restricted primal-dual gaps, acceleration can be achieved under
  strong-convexity assumptions. Namely, if either the primal or dual
  functional in the saddle-point formulation
  is strongly convex, then the method can be modified to
  yield $\mO(1/k^2)$   ergodic convergence. 
  In case of both functionals
  being strongly convex, similar modifications lead to an asymptotic
  convergence of $\mO(\vartheta^k)$ for some $0 < \vartheta < 1$.
  All methods allow in particular for preconditioning, i.e., the
  inexact solution of the implicit linear step in terms of linear
  splitting methods with all convergence rates being maintained.
  The efficiency of the proposed methods is verified and compared
  numerically, especially showing competitiveness with
  respect to state-of-the-art accelerated algorithms.
\end{abstract}

\paragraph{Key words.} Saddle-point problems, Douglas--Rachford splitting,
acceleration strategies, linear preconditioners,
convergence analysis, primal-dual gap.

\paragraph{AMS subject classifications.}
65K10,
49K35,
90C25,
65F08.

\section{Introduction}

In this work we are concerned with Douglas--Rachford-type algorithms
for solving the saddle-point problem
\begin{equation}
  \label{eq:saddle-point-prob}
  \min_{x \in \dom \mF} \max_{y \in \dom \mG} \
  \scp{\mK x}{y} + \mF(x) - \mG(y)
\end{equation}
where $\mF: X \to {]{-\infty,\infty}]}$, $\mG: Y \to {]{-\infty,\infty}]}$
are proper, convex and lower semi-continuous functionals on the
real Hilbert spaces $X$ and $Y$, respectively, and $\mK: X \to Y$ is a
linear and continuous mapping. Problems of this type commonly occur as
primal-dual formulations in the context of Fenchel--Rockafellar duality.
In case the latter holds, solutions of~\eqref{eq:saddle-point-prob} are
exactly solutions of the primal-dual problem
\begin{equation}
  \label{eq:primal-dual-prob}
  \min_{x \in X} \ \mF(x) + \mG^*(\mK x), \qquad
  \max_{y \in Y} \ -\mF^*(-\mK^* y) - \mG(y)
\end{equation}
with $\mF^*$ and $\mG^*$ denoting the Fenchel conjugate functionals of
$\mF$ and $\mG$, respectively.
We propose and study a class of Douglas--Rachford iterative algorithms for
the solution of~\eqref{eq:saddle-point-prob} with a focus on acceleration and
preconditioning strategies. The main aspects are threefold: First, the
algorithms only utilize specific implicit steps, the proximal mappings of
$\mF$ and $\mG$ as well as the solution of linear equations. In particular,
they converge independently from the choice of step-sizes associated with
these operations. Here, convergence of the iterates is in the weak sense
and we have a $\mO(1/k)$ rate for restricted primal-dual gaps
evaluated at associated ergodic sequences.
Second, in case of strong convexity of $\mF$ or $\mG$,
the iteration can be modified to yield accelerated convergence.
Specifically, in case of strongly convex $\mF$, the associated primal sequence
$\seq{x^k}$ converges strongly at rate $\mO(1/k^2)$ in terms of the
quadratic norm distance while there is weak convergence for the associated
dual sequence $\seq{y^k}$. The rate $\mO(1/k^2)$ also follows
for restricted primal-dual gaps evaluated at associated ergodic sequences.
Moreover, in case of $\mF$ and $\mG$ both being strongly convex, another
accelerating modification yields strongly convergent primal and dual
sequences with a rate of $\mo(\vartheta^k)$ in terms of squared norms
for some $0 < \vartheta < 1$. Analogously, the full primal-dual gap converges
with the rate $\mO(\vartheta^k)$ when evaluated at associated ergodic sequences.
Third, both the basic and accelerated versions of the Douglas--Rachford
iteration may be amended by preconditioning allowing for inexact solution
of the implicit linear step. Here, preconditioning is incorporated by
performing one or finitely many steps of linear operator splitting method
instead of solving the full implicit equation in each iteration step.
For appropriate preconditioners, the Douglas--Rachford method as well as
its accelerated variants converge with the same asymptotic rates.

To the best knowledge of the authors, this is the first time that
accelerated Douglas--Rachford-type methods for the solution
of~\eqref{eq:saddle-point-prob} are proposed for which the optimal rate
of $\mO(1/k^2)$ can be proven. Nevertheless, a lot of related research
has already been carried out regarding the solution of
\eqref{eq:saddle-point-prob} or \eqref{eq:primal-dual-prob}, the design and convergence
analysis of Douglas--Rachford-type methods as well as acceleration
and preconditioning strategies. For the solution
of~\eqref{eq:saddle-point-prob},
first-order primal-dual methods are commonly used, such as the
popular primal-dual iteration of \cite{CP1} which performs explicit
steps involving the operator $\mK$ and its adjoint as well as proximal steps
with respect to $\mF$ and $\mG$. Likewise, a commonly-used approach
for solving the primal problem in~\eqref{eq:primal-dual-prob} is the
\emph{alternating direction method of multipliers} (ADMM) in which
the arguments of $\mF$ and $\mG$ are treated independently and coupled by
linear constraints involving the operator $\mK$, see \cite{BPC}.
The latter is often identified as a Douglas--Rachford iteration on
the dual problem in~\eqref{eq:primal-dual-prob}, see \cite{DGBA, EP}, however,
as this iteration works, in its most general form, for
maximally monotone operators (see \cite{EP}), it may also be applied directly
on the optimality conditions for~\eqref{eq:saddle-point-prob}, see \cite{BLC},
as it is also done in this paper. Besides this application,
Douglas--Rachford-type algorithms are applied for the minimization
of sums of convex functionals (see \cite{CoPe}, for instance), potentially
with additional structure (see \cite{BH,D}, for instance).

Regarding acceleration strategies, the work \cite{YU} firstly introduced
approaches to minimize a class of possibly non-smooth convex functional
with optimal complexity $\mO(1/k^2)$ on the functional error. Since then,
several accelerated algorithms have been designed,
most notably FISTA \cite{BT} which is applicable for the
primal problem in~\eqref{eq:primal-dual-prob} when $\mG^*$ is smooth, and
again, the acceleration strategy presented in \cite{CP1,CP}, which is
applicable when $\mF$ is strongly convex and yields, as in this paper,
optimal rates for restricted primal-dual gaps evaluated at ergodic
sequences. Recently, an accelerated Douglas--Rachford method for the
minimization of the primal problem in~\eqref{eq:primal-dual-prob} was
proposed in \cite{PSB}, which, however, requires $\mG$ to be quadratic and
involves step-size constraints. In contrast, the acceleration strategies
for the solution of~\eqref{eq:saddle-point-prob}
in the present paper only require strong convexity of the primal
(or dual) functional, which are the same assumptions as in \cite{CP1}.
We also refer to \cite{OCLP,GDSB} for
accelerated ADMM-type algorithms as well as to \cite{Nbook,T} for a
further overview on acceleration strategies.

Finally, preconditioning techniques for the solution
of~\eqref{eq:saddle-point-prob} or~\eqref{eq:primal-dual-prob} can,
for instance, be found in \cite{POC} and base on modifying the
Hilbert-space norms leading to different proximal mappings. In
particular, for non-linear proximal mappings, this approach can be quite
limiting with respect to the choice of preconditioners and often, only
diagonal preconditioners are applicable in practice. The situation is
different for linear mappings where a variety of preconditioners is
applicable. This circumstance has been exploited in \cite{BS1,BS2} where
Douglas--Rachford methods for the solution~\eqref{eq:saddle-point-prob} have
been introduced that realize one or finitely iteration steps of
classical linear operator splitting methods, as it is also done in the
present work. The latter allows in particular for the inexact solution of the
implicit linear step without losing convergence properties or introducing any
kind of error control. Besides these approaches, the Douglas--Rachford
iteration and related methods may be preconditioned by so-called
\emph{metric selection}, see \cite{GB}.
This paper focuses, however, on obtaining linear
convergence only and needs restrictive strong convexity and
smoothness assumptions on the involved functionals. In contrast, the
preconditioning approaches in the present work are applicable for the
proposed Douglas--Rachford methods for the solution
of~\eqref{eq:saddle-point-prob} and, in particular, for the accelerated
variants without any additional assumptions.

The outline of the paper is as follows. First, in Section~\ref{sec:basic-dr},
the Douglas--Rachford-type algorithm is derived from the optimality
conditions for~\eqref{eq:saddle-point-prob} which serves as a basis for
acceleration. We prove weak convergence to saddle-points, an asymptotic
ergodic rate of $\mO(1/k)$ for restricted primal-dual gaps and shortly
discuss preconditioning. This iteration scheme is then modified, in
Section~\ref{sec:acceleration}, to yield accelerated iterations in case
of strong convexity of the involved functionals. In particular, we
obtain the ergodic rates $\mO(1/k^2)$ and $\mO(\vartheta^k)$, respectively,
in case that one or both functionals are strongly convex. For these
accelerated schemes, preconditioning is again discussed. Numerical
experiments and comparisons are then shown in
Section~\ref{sec:numerics} for the accelerated schemes and variational
image denoising with quadratic discrepancy and total variation ($\TV$)
as well as Huber-regularized total variation. We conclude with a summary
of the convergence results and an outlook in Section~\ref{sec:summary}.

\section{The basic Douglas--Rachford iteration}
\label{sec:basic-dr}

To set up for the accelerated Douglas--Rachford methods, let us first
discuss the basic solution strategy for
problem~\eqref{eq:saddle-point-prob}. We proceed by writing the iteration
scheme as a proximal point algorithm associated with as degenerate metric,
similar to \cite{BS1,BS2}. Here, however, the reformulation differs in the
way how the linear and non-linear terms are treated and by the fact that
step-size operators are introduced. The latter allow, in particular,
for different primal and dual step-sizes, a feature that is crucial
for acceleration, as it will turn out in Section~\ref{sec:acceleration}.

The proposed Douglas--Rachford iteration is derived as follows.
First observe that in terms of subdifferentials, saddle-point
pairs $(x,y) \in X \times Y$ can be characterized by the inclusion relation
\begin{equation}
  \label{eq:saddle-point-optimality}
  \begin{pmatrix}
    0 \\ 0
  \end{pmatrix}
  \in
  \begin{pmatrix}
    0 & \mK^* \\ -\mK & 0
  \end{pmatrix}
  \begin{pmatrix}
    x \\ y
  \end{pmatrix}
  +
  \begin{pmatrix}
    \partial \mF & 0 \\ 0 & \partial \mG
  \end{pmatrix}
  \begin{pmatrix}
    x \\ y
  \end{pmatrix}.
\end{equation}
This may, in turn, be written as the finding the root of the sum of
two monotone operators:
Denoting, $Z = X \times Y$, $z = (x,y)$,
we find the equivalent representation
\[
0 \in Az + Bz
\]
with the data
\begin{equation}
  \label{eq:saddle-point-data}
  Az = \begin{pmatrix}
    0 & \mK^* \\ -\mK & 0
  \end{pmatrix}
  \begin{pmatrix}
    x \\ y
  \end{pmatrix},
  \quad
  Bz =
  \begin{pmatrix}
    \partial \mF & 0 \\ 0 & \partial \mG
  \end{pmatrix}
  \begin{pmatrix}
    x \\ y
\end{pmatrix}.
\end{equation}
Both $A$ and $B$
are maximally monotone operators on $Z$. Since $A$ has full domain,
also $A + B$ is maximally monotone \cite{BCP}.

Among the many possibilities for solving the root-finding problem
$0 \in Az + Bz$, the Douglas--Rachford method is fully implicit, i.e.,
a reformulation in terms of the proximal point iteration suggests itself.
This can indeed be achieved with a degenerate metric on $Z \times Z$.
Introduce a step-size operator $\Sigma: Z \to Z$
which is assumed to be a continuous, symmetric, positive operator.
Then, setting $\Sigma^{-1} \tilde z \in Az$, we see that
\[
0 \in Az+Bz \quad \Leftrightarrow
\quad
\left\{
\begin{aligned}
0 &\in Bz + \Sigma^{-1} \tilde z, \\
\Sigma^{-1}\tilde z &\in Az
\end{aligned}
\right.
\quad
\Leftrightarrow
\quad
\left\{
\begin{aligned}
0 &\in Bz + \Sigma^{-1} \tilde z, \\
0 &\in -\Sigma^{-1} z + \Sigma^{-1}A^{-1}\Sigma^{-1} \tilde z.
\end{aligned}
\right.
\]
Hence, one arrives at the equivalent problem
\begin{equation}
  \label{eq:saddle_point_inclusion_prob}
  0 \in \mA
  \begin{pmatrix}
    z \\ \tilde z
  \end{pmatrix},
  \qquad
  \mA =
  \begin{pmatrix}
    B & \Sigma^{-1} \\
    -\Sigma^{-1} & \Sigma^{-1}A^{-1}\Sigma^{-1}
  \end{pmatrix}.
\end{equation}
We employ the proximal point algorithm with respect
to the degenerate metric
\[
\mM =
\begin{pmatrix}
  \Sigma^{-1} & -\Sigma^{-1} \\
  -\Sigma^{-1} & \Sigma^{-1}
\end{pmatrix},
\]
which corresponds to
\begin{equation}
  \label{eq:general_proximal_point}
  0 \in \mM
  \begin{pmatrix}
    z^{k+1} - z^k\\ \tilde z^{k+1} - \tilde z^k
  \end{pmatrix}
  +
  \mA
  \begin{pmatrix}
    z^{k+1}\\ \tilde z^{k+1}
  \end{pmatrix},
\end{equation}
or, equivalently,
\[
\left\{
\begin{aligned}
z^{k+1} &= (\id  + \Sigma B)^{-1}(z^k - \tilde z^k), \\
\tilde z^{k+1} &= (\id  + A^{-1}\Sigma^{-1})^{-1}(2z^{k+1}- z^k + \tilde z^k). \\
\end{aligned}
\right.
\]
Using Moreau's identity and substituting $\bar z^k = z^k - \tilde z^k$,
one indeed arrives at
the Douglas--Rachford iteration
\begin{equation}\label{eq:dr:old}
\left\{
\begin{aligned}
z^{k+1} &= (\id  + \Sigma B)^{-1}(\bar z^k), \\
\bar z^{k+1} &= \bar z^k +
(\id  + \Sigma A)^{-1}(2 z^{k+1} - \bar z^k) - z^{k+1}. \\
\end{aligned}
\right.
\end{equation}
Plugging in the data~\eqref{eq:saddle-point-data} as well as
setting, for $\sigma, \tau > 0$,
\begin{equation}
  \label{eq:step_size_operator}
  \Sigma =
  \begin{pmatrix}
    \sigma \id & 0 \\ 0 & \tau \id
  \end{pmatrix},
\end{equation}
the iterative scheme~\eqref{eq:dr:old} turns out to be equivalent to
\begin{equation}
  \label{eq:douglas-rachford}
  \left\{
    \begin{aligned}
      x^{k+1} &= (\id + \sigma \subgrad \mF)^{-1}(\bar x^k), \\
      y^{k+1} &= (\id + \tau \subgrad \mG)^{-1}(\bar y^k), \\
      \bar x^{k+1} &= x^{k+1} - \sigma \mK^* (y^{k+1} + \bar y^{k+1} - \bar y^k), \\
      \bar y^{k+1} &= y^{k+1} + \tau \mK (x^{k+1} + \bar x^{k+1} - \bar x^k).
    \end{aligned}
\right.
\end{equation}
The latter two equations are coupled but can easily be decoupled,
resulting, e.g., with denoting $d^{k+1} = x^{k+1} + \bar{x}^{k+1} -\bar{x}^{k}$,
in an update scheme as shown in
Table~\ref{tab:douglas-rachford}.
With the knowledge of the resolvents $(\id + \sigma \subgrad \mF)^{-1}$,
$(\id + \tau \subgrad \mG)^{-1}$, $(\id + \sigma \tau \mK^*\mK)^{-1}$,
the iteration can be computed.

\begin{table}
  \centering
  \begin{tabular}{p{0.15\linewidth}p{0.75\linewidth}}
    \toprule

    \multicolumn{2}{l}{\textbf{DR}\ \ \ \ \textbf{Objective:}
    \hfill    Solve  \ \ $\min_{x\in \dom \mF} \max_{y \in  \dom \mG}
    \ \langle \mK x,y\rangle + \mF(x) - \mG(y)$
    \hfill\mbox{} }
    \\
    \midrule

    Initialization:
 &
   $(\bar x^0, \bar y^0) \in X \times Y$
   initial guess, $\sigma > 0, \tau > 0$ step sizes
    \\[\medskipamount]
    Iteration:
 &
   \raisebox{1em}[0mm][7.4\baselineskip]{\begin{minipage}[t]{1.0\linewidth}
       {       \renewcommand{\theHequation}{dr}       \begin{equation}\label{eq:douglas-rachford_}\tag{DR}
         \left\{
           \begin{aligned}
             x^{k+1} &= (\id + \sigma \subgrad \mF)^{-1}(\bar x^k) \\
             y^{k+1} &= (\id + \tau \subgrad \mG)^{-1}(\bar y^k) \\
             b^{k+1} &= (2x^{k+1} - \bar x^k) - \sigma \mK^*(2y^{k+1} - \bar y^k) \\
             d^{k+1} &= (\id + \sigma \tau \mK^*\mK)^{-1}b^{k+1} \\
             \bar x^{k+1} &= \bar x^k - x^{k+1} + d^{k+1} \\
             \bar y^{k+1} &= y^{k+1} + \tau \mK d^{k+1}
           \end{aligned}
         \right.
       \end{equation}}
     \end{minipage}}
    \\
        Output:
 &
      $\seq{(x^k,y^k)}$  primal-dual sequence  \\
    \bottomrule
  \end{tabular}
  \caption{The basic Douglas--Rachford iteration for the solution of
    convex-concave saddle-point problems of
    type~\eqref{eq:saddle-point-prob}.}
  \label{tab:douglas-rachford}
\end{table}

\subsection{Convergence analysis}

Regarding convergence of the iteration~\eqref{eq:douglas-rachford},
introduce the Lagrangian $\mL: \dom \mF \times \dom \mG \to \RR$ associated
with the saddle-point problem~\eqref{eq:saddle-point-prob}:
\[
\mL(x,y) = \scp{\mK x}{y} + \mF(x) - \mG(y).
\]
If Fenchel--Rockafellar duality holds, then we may consider
restricted primal-dual
gaps $\gap_{X_0 \times Y_0}: X \times Y \to {]{-\infty,\infty}]}$
associated with $X_0 \times Y_0 \subset \dom \mF \times \dom \mG$
given by
\[
\gap_{X_0 \times Y_0}(x,y)
= \sup_{(x',y') \in X_0 \times Y_0} \ \mL(x,y') - \mL(x',y).
\]
Letting $(x^*,y^*)$ be a primal-dual solution pair
of~\eqref{eq:saddle-point-prob}, we
furthermore consider the following
restricted primal and dual error functionals
\[
\begin{aligned}
  \err^p_{Y_0}(x) &=  \bigl[ \mF(x)
  + \sup_{y' \in Y_0} \scp{\mK x}{y'} - \mG(y') \bigr]
  - \bigl[ \mF(x^*) + \sup_{y' \in Y_0} \scp{\mK x^*}{y'} - \mG(y') \bigr], \\
  \err^d_{X_0}(y) &=
  \bigl[ \mG(y) + \sup_{x' \in X_0} \scp{-\mK^* y}{x'} - \mF(x') \bigr]
  - \bigl[ \mG(y^*) + \sup_{x' \in X_0} \scp{-\mK^*y^*}{x'} - \mF(x') \bigr].
\end{aligned}
\]
The special case $X_0 = \dom \mF$, $Y_0 = \dom \mG$ gives the well-known
primal-dual gap as well as the primal and dual error, i.e.,
\begin{align*}
  \gap(x,y) &= \mF(x) + \mG^*(\mK x) + \mG(y) + \mF^*(- \mK^* y),
  \\
  \err^p(x) &= \mF(x) + \mG^*(\mK x) - \bigl[\mF(x^*) + \mG^*(\mK x^*)
  \bigr],
  \\
  \err^d(y) &= \mG(y) + \mF^*(-\mK^*y) -
              \bigl[\mG(y^*) + \mF^*(-\mK^*y^*) \bigr],
\end{align*}
where $\mF^*$ and $\mG^*$ are again the Fenchel conjugates.
For this reason, the study of the difference of Lagrangian is
appropriate for convergence analysis.
We will later find situations where $\gap_{X_0 \times Y_0}$,
$\err^p_{Y_0}$ and $\err^d_{X_0}$ coincide,
for the iterates of the proposed methods, with $\gap$, $\err^p$ and
$\err^d$, respectively,
for $X_0$ and $Y_0$ not corresponding to the full domains of $\mF$
and $\mG$.
However, the restricted variants are already suitable for measuring
optimality, as the following proposition shows.

\begin{proposition}
  \label{prop:restricted-error-func}
  Let $X_0 \times Y_0 \subset \dom \mF \times \dom \mG$ contain a
  saddle-point $(x^*,y^*)$ of~\eqref{eq:saddle-point-prob}. Then,
  $\gap_{X_0 \times Y_0}$, $\err^p_{Y_0}$ and $\err^d_{X_0}$ are non-negative
  and we have
  \[
  \gap_{X_0 \times Y_0}(x,y) = \err^p_{X_0}(x) + \err^d_{Y_0}(y),
  \qquad
  \err^p_{Y_0}(x) \leq \gap_{\sett{x^*} \times Y_0} (x,y),
  \qquad
  \err^d_{X_0}(y) \leq \gap_{X_0 \times \sett{y^*}} (x,y)
  \]
  for each $(x,y) \in X \times Y$.
\end{proposition}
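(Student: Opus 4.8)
The plan is to reduce all three assertions to two elementary ingredients: the defining saddle-point inequalities for $(x^*,y^*)$, namely $\mL(x^*,y)\le\mL(x^*,y^*)\le\mL(x,y^*)$ for all $x\in\dom\mF$ and $y\in\dom\mG$ (and in particular for $x\in X_0$, $y\in Y_0$), and the algebraic fact that the supremum defining $\gap_{X_0\times Y_0}(x,y)$ decouples. Indeed, expanding $\mL(x,y')-\mL(x',y)$ and using $\langle\mK x',y\rangle=\langle x',\mK^*y\rangle$, the part depending on $y'$ separates from the part depending on $x'$, so $\gap_{X_0\times Y_0}(x,y)$ equals the sum of $[\mF(x)+\sup_{y'\in Y_0}\langle\mK x,y'\rangle-\mG(y')]$ and $[\mG(y)+\sup_{x'\in X_0}\langle-\mK^*y,x'\rangle-\mF(x')]$. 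Comparing with the definitions of $\err^p_{Y_0}$ and $\err^d_{X_0}$, this reads $\gap_{X_0\times Y_0}(x,y)=\err^p_{Y_0}(x)+\err^d_{X_0}(y)+\gap_{X_0\times Y_0}(x^*,y^*)$, the last term being exactly the sum of the two reference constants subtracted in those definitions. (In the identity of the statement, the subscripts on the error functionals are understood as $Y_0$ for $\err^p$ and $X_0$ for $\err^d$, matching their definitions.)

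Next I would show $\gap_{X_0\times Y_0}(x^*,y^*)=0$: the saddle-point inequalities give $\mL(x^*,y')-\mL(x',y^*)\le 0$ for every $(x',y')\in X_0\times Y_0$, so the supremum is $\le 0$, while the admissible choice $(x',y')=(x^*,y^*)$ forces it to be $\ge 0$. This establishes the decomposition $\gap_{X_0\times Y_0}(x,y)=\err^p_{Y_0}(x)+\err^d_{X_0}(y)$. Non-negativity of $\gap_{X_0\times Y_0}$ is obtained the same way: evaluating the defining supremum at $(x',y')=(x^*,y^*)$ and using the saddle-point inequalities gives $\gap_{X_0\times Y_0}(x,y)\ge\mL(x,y^*)-\mL(x^*,y)\ge 0$. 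Since $\err^p_{Y_0}(x^*)=0$ and $\err^d_{X_0}(y^*)=0$ directly from their definitions, substituting $y=y^*$ and $x=x^*$ into the decomposition gives $\err^p_{Y_0}(x)=\gap_{X_0\times Y_0}(x,y^*)\ge 0$ and $\err^d_{X_0}(y)=\gap_{X_0\times Y_0}(x^*,y)\ge 0$. Finally, the two inequalities follow by applying the decomposition with $X_0$ replaced by the singleton $\sett{x^*}$ (so that $\sett{x^*}\times Y_0$ still contains $(x^*,y^*)$): one obtains $\gap_{\sett{x^*}\times Y_0}(x,y)=\err^p_{Y_0}(x)+\err^d_{\sett{x^*}}(y)$ with $\err^d_{\sett{x^*}}(y)=\mL(x^*,y^*)-\mL(x^*,y)\ge 0$ by the saddle-point property, whence $\err^p_{Y_0}(x)\le\gap_{\sett{x^*}\times Y_0}(x,y)$; the dual estimate is symmetric, with $Y_0$ replaced by $\sett{y^*}$.

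I do not anticipate a real obstacle, since the argument is essentially bookkeeping with the definitions. The two points that need a little care are the decoupling step — the two inner suprema range over independent variables, so the supremum of the sum is the sum of the suprema — together with the sign and adjoint manipulations in the $\langle\mK x',y\rangle$ term, and the recognition that the additive constant produced by the decomposition is exactly $\gap_{X_0\times Y_0}$ evaluated at the saddle-point, which therefore vanishes.
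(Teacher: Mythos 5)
Your proof is correct, and it takes a somewhat different route in two places than the paper. To establish the identity $\gap_{X_0\times Y_0} = \err^p_{Y_0} + \err^d_{X_0}$, the paper first shows that the restricted suprema in the reference constants actually attain the conjugate values, i.e.\ $\sup_{y'\in Y_0}\scp{\mK x^*}{y'}-\mG(y') = \mG^*(\mK x^*)$ and $\sup_{x'\in X_0}\scp{-\mK^*y^*}{x'}-\mF(x') = \mF^*(-\mK^*y^*)$, and then invokes Fenchel--Rockafellar duality $\mF(x^*)+\mG^*(\mK x^*)+\mG(y^*)+\mF^*(-\mK^*y^*)=0$ to make the constant vanish. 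You instead observe directly via the same decoupling that the constant equals $\gap_{X_0\times Y_0}(x^*,y^*)$, which is zero purely because of the two saddle-point inequalities; this is cleaner and bypasses the explicit appeal to duality and conjugates. Likewise, for non-negativity of $\err^p_{Y_0}$ the paper runs a separate subgradient-inequality argument from the optimality condition $-\mK^*y^*\in\subgrad\mF(x^*)$, whereas you obtain it for free by plugging $y=y^*$ into the decomposition (using $\err^d_{X_0}(y^*)=0$, which is immediate from the definition). Both approaches are sound and roughly the same length; yours is arguably more self-contained since every step reduces to the saddle-point property. The final two inequalities are proved identically in both cases, by applying the decomposition with $X_0$ or $Y_0$ shrunk to a singleton.
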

\begin{proof}
  We first show the statements for $\gap_{X_0 \times Y_0}$. As $(x^*,y^*)$
  is a saddle-point, $\mL(x,y^*) - \mL(x^*,y) \geq 0$ for any $(x,y)
  \in X \times Y$, hence $\gap_{X_0 \times Y_0}(x,y) \geq 0$.
  By optimality of saddle-points, it follows that
  \[
  \mG^*(\mK x^*) =
  \scp{\mK x^*}{y^*} - \mG(y^*) \geq \sup_{y' \in Y_0} \scp{\mK x^*}{y'}
  - \mG(y') \geq \scp{\mK x^*}{y^*} - \mG(y^*),
  \]
  and analogously, $\mF^*(-\mK^* y^*) = \sup_{x' \in X_0}
  \scp{-\mK^* y^*}{x'} - \mF(x')$. By Fenchel--Rockafellar duality,
  $\mF(x^*) + \mG^*(\mK x^*) + \mG(y^*) + \mF^*(-\mK^* y^*) = 0$, hence
  \begin{align*}
    \err^p_{Y_0}(x) + \err^d_{X_0}(y)
    &=
      \mF(x) + \sup_{y' \in Y_0} \scp{\mK x}{y'} - \mG(y')
      + \mG(y) + \sup_{x' \in X_0} \scp{-\mK^*y^*}{x'} - \mF(x')
    \\
    & = \sup_{(x',y') \in X_0 \times Y_0} \mL(x,y') - \mL(x',y)
      = \gap_{X_0 \times Y_0}(x,y).
  \end{align*}
  Next, observe that optimality of $(x^*,y^*)$ and the subgradient
  inequality implies
  \begin{multline*}
    \err^p_{Y_0}(x) \geq \mF(x) + \scp{\mK x}{y^*} - \mG(y^*) - \mF(x^*)
    - \scp{\mK x^*}{y^*} + \mG(y^*) \\
    = \mF(x) - \mF(x^*)
    - \scp{- \mK^* y^*}{x - x^*} \geq 0.
  \end{multline*}
  Analogously, one sees that $\err^d_{X_0} \geq 0$.
  Using the non-negativity and the already-proven identity for
  $\gap_{X_0 \times Y_0}$ yields
  $\err^p_{Y_0}(x) \leq \err^p_{Y_0}(x) + \err^d_{\sett{x^*}}(y) =
  \gap_{\sett{x^*} \times Y_0}(x,y)$ and the
  analogous estimate $\err^d_{X_0}(x) \leq \gap_{X_0 \times \sett{y^*}}(x,y)$.
                              \end{proof}
We are particularly interested in the restricted error functionals
associated with bounded sets containing a saddle-point. These can,
however, coincide with the ``full'' functionals under mild assumptions.
\begin{lemma}
  \label{lem:full-gap-errors}
  Suppose a saddle point $(x^*,y^*)$ of $\mL$ exists.
  \begin{enumerate}
  \item
    If $\mF$ is strongly coercive, i.e., $\mF(x)/\norm{x} \to \infty$
    if $\norm{x} \to \infty$, then
    for each bounded $Y_0 \subset \dom \mG$ there is
    a bounded $X_0 \subset \dom \mF$
    such that $\err^d = \err^d_{X_0}$ on $Y_0$.
  \item
    If $\mG$ is strongly coercive, then for each
    bounded $X_0 \subset \dom \mF$ there is a bounded $Y_0 \subset
    \dom \mG$
    such that
    $\err^p = \err^p_{Y_0}$ on $X_0$.
  \item
    If both $\mF$ and $\mG$ are strongly coercive, then
    for bounded $X_0 \times Y_0 \subset \dom \mF \times \dom \mG$
    there exist bounded $X_0' \times Y_0' \subset \dom \mF \times
    \dom \mG$ such that
    $\gap = \gap_{X_0' \times Y_0'}$ on $X_0 \times Y_0$.
  \end{enumerate}
\end{lemma}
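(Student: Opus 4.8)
The plan is to reduce all three assertions to a single observation about Fenchel conjugates of strongly coercive functionals: \emph{if $\mF$ is strongly coercive, then the supremum defining $\mF^*(\xi)$ may be restricted to a fixed bounded set, uniformly for $\xi$ in any prescribed bounded set.} To establish this, fix a point $x^* \in \dom\mF$ (e.g.\ the first component of the assumed saddle point) and a radius $R > 0$. For any $\xi$ with $\norm{\xi} \le R$ and any $x' \in \dom\mF$ one has $\scp{\xi}{x'} - \mF(x') \le R\norm{x'} - \mF(x') = -\norm{x'}\bigl(\mF(x')/\norm{x'} - R\bigr)$, which tends to $-\infty$ as $\norm{x'} \to \infty$ by strong coercivity, whereas at $x'=x^*$ we always have $\scp{\xi}{x^*} - \mF(x^*) \ge -R\norm{x^*} - \mF(x^*)$, a finite lower bound not depending on $\xi$. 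Consequently there is a radius $M = M(R) \ge \norm{x^*}$, depending on $R$ but not on the individual $\xi$, such that $\mF^*(\xi) = \sup_{x'\in\dom\mF,\, \norm{x'}\le M}\bigl(\scp{\xi}{x'} - \mF(x')\bigr)$ whenever $\norm{\xi}\le R$. Getting the radius $M$ to hold uniformly in $\xi$ — i.e.\ depending on $R$ alone — is the main (and essentially the only) point; the rest is bookkeeping with the definitions.

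For the first assertion, put $R = \sup_{y\in Y_0\cup\sett{y^*}}\norm{\mK^* y}$, which is finite since $Y_0$ is bounded and $\mK^*$ is continuous, and let $X_0 = \sett{x'\in\dom\mF : \norm{x'}\le M(R)}$, a bounded subset of $\dom\mF$ containing $x^*$. Applying the observation with $\xi = -\mK^* y$ gives $\sup_{x'\in X_0}\bigl(\scp{-\mK^* y}{x'} - \mF(x')\bigr) = \mF^*(-\mK^* y)$ for every $y\in Y_0$, and the same equality with $y$ replaced by $y^*$; inserting these two identities into the definitions of $\err^d_{X_0}$ and $\err^d$ shows $\err^d_{X_0} = \err^d$ on $Y_0$. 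The second assertion is entirely symmetric: with the roles of $\mF,\mK^*$ replaced by $\mG,\mK$, one applies the observation to $\mG$ with $\xi = \mK x$ ranging over the bounded set $\mK\bigl(X_0\cup\sett{x^*}\bigr)$, obtaining a bounded $Y_0 \subset\dom\mG$ (which we may take to contain $y^*$) with $\sup_{y'\in Y_0}\bigl(\scp{\mK x}{y'} - \mG(y')\bigr) = \mG^*(\mK x)$ for all $x\in X_0\cup\sett{x^*}$, hence $\err^p_{Y_0} = \err^p$ on $X_0$.

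For the third assertion, combine the first two. Given bounded $X_0\times Y_0\subset\dom\mF\times\dom\mG$, the first assertion applied to $Y_0$ produces a bounded $X_0'\subset\dom\mF$ with $\err^d_{X_0'} = \err^d$ on $Y_0$, and the second applied to $X_0$ produces a bounded $Y_0'\subset\dom\mG$ with $\err^p_{Y_0'} = \err^p$ on $X_0$; after enlarging the radii we may assume $x^*\in X_0'$ and $y^*\in Y_0'$. Then for $(x,y)\in X_0\times Y_0$ the additive decomposition $\gap_{X_0'\times Y_0'}(x,y) = \err^p_{Y_0'}(x) + \err^d_{X_0'}(y)$ established in the proof of Proposition~\ref{prop:restricted-error-func} gives $\gap_{X_0'\times Y_0'}(x,y) = \err^p(x) + \err^d(y)$; since a saddle point exists we have $\mF(x^*) + \mG^*(\mK x^*) + \mG(y^*) + \mF^*(-\mK^* y^*) = 0$, so the right-hand side equals $\mF(x) + \mG^*(\mK x) + \mG(y) + \mF^*(-\mK^* y) = \gap(x,y)$, proving $\gap = \gap_{X_0'\times Y_0'}$ on $X_0\times Y_0$. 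Alternatively, one may bypass the error functionals and apply the uniform-restriction observation directly to the two suprema appearing in the definition of $\gap_{X_0'\times Y_0'}$.
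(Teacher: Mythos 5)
Your proof is correct and follows essentially the same approach as the paper's: use strong coercivity to confine the supremum defining $\mF^*(-\mK^* y)$ (resp.\ $\mG^*(\mK x)$) to a fixed ball, uniformly over the bounded set of arguments, and then obtain the second and third statements by symmetry and combination. The only cosmetic differences are that you phrase the key step as a stand-alone observation about Fenchel conjugates of strongly coercive functionals and anchor the lower bound at $x^*$ rather than at a generic $x_0 \in \dom\mF$, and that you spell out the third assertion (via the additive decomposition of Proposition~\ref{prop:restricted-error-func}) where the paper simply notes it follows by combination.
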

\begin{proof}
  Clearly, it suffices to show the first statement
  as the second follows by analogy and the third by combining the
  first and the second.
    For this purpose, let $Y_0 \subset \dom \mG$ be bounded, i.e.,
  $\norm{y} \leq C$ for each $y \in Y_0 \cup \sett{y^*}$
  and some $C > 0$ independent of
  $y$. Pick $x_0 \in \dom \mF$ and choose $M \geq \norm{x_0}$ such that
  $\mF(x') \geq C \norm{\mK} (\norm{x'} + \norm{x_0}) + \mF(x_0)$
  for each $\norm{x'} > M$ which is possible by strong
  coercivity of $\mF$. Then, for all $y \in Y_0 \cup \sett{y^*}$
  and $\norm{x'} > M$,
  \[
  \scp{-\mK^* y}{x'} - \mF(x') \leq \norm{\mK} \norm{x'} \norm{y} -
  C \norm{\mK} \norm{x'} - C \norm{\mK}\norm{x_0} - \mF(x_0) \leq
  \scp{-\mK^*y}{x_0} -  \mF(x_0),
  \]
  hence the supremum $\mF^*(-\mK^* y) = \sup_{x' \in \dom \mF} \
  \scp{-\mK^*y}{x'} - \mF(x')$ is
  attained on the bounded set
  $X_0 = \sett{\norm{x'} \leq M}$.
  This shows $\err^d = \err^d_{X_0}$ on $Y_0$.
            \end{proof}

As the next step preparing the convergence analysis,
let us denote the symmetric bilinear form associated
with the positive semi-definite
operator $\mM$:
\[
\scp[\mM]{(z_1,\tilde z_1)}{(z_2, \tilde z_2)} =
\Bigscp{\mM
  \begin{bmatrix}
    z_1 \\ \tilde z_1
  \end{bmatrix}}
{\begin{bmatrix}
    z_2 \\ \tilde z_2
  \end{bmatrix}}
= \scp{\Sigma^{-1}(z_1 - \tilde z_1)}{z_2 - \tilde z_2}.
\]
With the choice~\eqref{eq:step_size_operator} and the substitution
$\bar z = z - \tilde z = (\bar x, \bar y)$ for $(z, \tilde z) = (x,y,\tilde x,\tilde y)$,
this becomes, with a slight abuse of notation,
\[
\scp[\mM]{(z_1, \tilde z_1)}{(z_2, \tilde z_2)} =
\scp[\mM]{\bar z_1}{\bar z_2} =
\frac1\sigma \scp{\bar x_1}{\bar x_2} + \frac1\tau \scp{\bar y_1}{\bar y_2}.
\]
Naturally, the induced squared norm will be denoted by
$\norm[\mM]{(z,\tilde z)}^2 =
\norm[\mM]{\bar z}^2 = \frac1\sigma \norm{\bar x}^2 + \frac1\tau \norm{\bar y}^2$.

\begin{lemma}
  \label{lem:douglas-rachford-lagrangian}
  For each $k \in \NN$ and $z = (x,y) \in \dom \mF \times \dom \mG$,
  iteration~\eqref{eq:douglas-rachford} satisfies
  \begin{equation}
    \label{eq:douglas-rachford-lagrangian-estimate}
    \begin{aligned}
      \mL(x^{k+1},y) - \mL(x,y^{k+1})
      & \leq \frac{\norm[\mM]{\bar z^{k} - (\id - \Sigma A) z}^2}2 -
      \frac{\norm[\mM]{\bar z^{k+1} - (\id - \Sigma A) z}^2}2
      - \frac{\norm[\mM]{\bar z^{k} - \bar z^{k+1}}^2}2
      \\
      & = \frac1{\sigma}
      \Bigl(\frac{\norm{\bar x^k - x + \sigma \mK^* y}^2}{2} -
      \frac{\norm{\bar x^{k+1} - x + \sigma \mK^* y}^2}{2} -
      \frac{\norm{\bar x^k - \bar x^{k+1}}^2}{2}
      \Bigr)  \\
      & \quad
      + \frac1{\tau}
      \Bigl(\frac{\norm{\bar y^k - y - \tau \mK x}^2}{2} -
      \frac{\norm{\bar y^{k+1} - y - \tau \mK x}^2}{2} -
      \frac{\norm{\bar y^k - \bar y^{k+1}}^2}{2}
      \Bigr).
    \end{aligned}
  \end{equation}
\end{lemma}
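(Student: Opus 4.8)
The plan is to extract a subgradient inequality for $\mF$ and $\mG$ from the two resolvent steps of \eqref{eq:douglas-rachford}, to combine it with the skew-adjointness of the coupling operator $A$ from \eqref{eq:saddle-point-data}, and to recognize the resulting bound as a telescoping quantity in the $\mM$-metric by means of a polarization identity. Writing $z^{k+1} = (x^{k+1},y^{k+1})$, the first two lines of \eqref{eq:douglas-rachford} are equivalent to $\Sigma^{-1}(\bar z^k - z^{k+1}) \in B z^{k+1}$, i.e.\ $\sigma^{-1}(\bar x^k - x^{k+1}) \in \subgrad\mF(x^{k+1})$ and $\tau^{-1}(\bar y^k - y^{k+1}) \in \subgrad\mG(y^{k+1})$. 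The subgradient inequality for $\mF$ and $\mG$ then gives, for every $z = (x,y) \in \dom\mF \times \dom\mG$,
\[
\mF(x^{k+1}) - \mF(x) + \mG(y^{k+1}) - \mG(y) \le \scp{\Sigma^{-1}(\bar z^k - z^{k+1})}{z^{k+1} - z};
\]
since the bilinear part of the Lagrangian satisfies $\scp{\mK x^{k+1}}{y} - \scp{\mK x}{y^{k+1}} = \scp{Az}{z^{k+1}}$ (directly from the form of $A$) and $\scp{Az}{z} = 0$ by skew-adjointness of $A$, adding it yields
\[
\mL(x^{k+1},y) - \mL(x,y^{k+1}) \le \scp{\Sigma^{-1}(\bar z^k - z^{k+1}) + Az}{z^{k+1} - z}.
\]
This is the only place where an inequality, rather than an identity, enters.

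The second step is to bring in the remaining line of \eqref{eq:dr:old}. Setting $w^{k+1} := z^{k+1} + \bar z^{k+1} - \bar z^k = (\id + \Sigma A)^{-1}(2z^{k+1} - \bar z^k)$, the defining relation rearranges to $A w^{k+1} = \Sigma^{-1}(z^{k+1} - \bar z^{k+1})$, equivalently $\bar z^{k+1} = z^{k+1} - \Sigma A w^{k+1}$. On the other side, abbreviating $v := (\id - \Sigma A)z$ so that $Az = \Sigma^{-1}(z - v)$, a three-point (polarization) identity in the $\mM$-inner product shows that the right-hand side of \eqref{eq:douglas-rachford-lagrangian-estimate} equals $\scp{\Sigma^{-1}(\bar z^k - \bar z^{k+1})}{\bar z^{k+1} - v}$. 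It thus remains to prove
\[
\scp{\Sigma^{-1}(\bar z^k - z^{k+1}) + Az}{z^{k+1} - z} \le \scp{\Sigma^{-1}(\bar z^k - \bar z^{k+1})}{\bar z^{k+1} - v}.
\]

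To close this I would substitute $z^{k+1} = w^{k+1} - \bar z^{k+1} + \bar z^k$ together with $\bar z^{k+1} = z^{k+1} - \Sigma A w^{k+1}$ into both sides. At this point no functional values appear anymore, and a direct expansion collapses the difference of the two sides to $\scp{A(w^{k+1} - z)}{w^{k+1} - z}$, which is zero because $A$ is skew-adjoint; hence the displayed inequality in fact holds with equality, and the first line of \eqref{eq:douglas-rachford-lagrangian-estimate} follows. The second, componentwise line is then obtained simply by expanding $\norm[\mM]{\cdot}^2 = \tfrac1\sigma\norm{\cdot}^2 + \tfrac1\tau\norm{\cdot}^2$ and inserting $(\id - \Sigma A)z = (x - \sigma\mK^* y,\, y + \tau\mK x)$. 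I expect the delicate point to be precisely this last bookkeeping step: one has to pick the auxiliary variable $w^{k+1}$ and the reparametrization $v = (\id - \Sigma A)z$ so that all non-skew contributions cancel and only the identically vanishing skew form $\scp{A(w^{k+1}-z)}{w^{k+1}-z}$ is left over; everything else in the argument is routine.
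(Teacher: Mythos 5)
Your argument is correct and reaches the same telescoping form $\scp[\mM]{\bar z^k - \bar z^{k+1}}{\bar z^{k+1} - (\id - \Sigma A) z}$ as the paper, followed by the same polarization identity. The difference is purely in the packaging of the algebraic cancellation: the paper works componentwise, adds the coupling terms $\scp{\mK x^{k+1}}{y} - \scp{\mK x}{y^{k+1}}$ by hand, and explicitly observes that the cross terms $\pm\scp{\mK(x^{k+1}+\bar x^{k+1}-\bar x^k)}{y^{k+1}+\bar y^{k+1}-\bar y^k}$ cancel upon addition; you instead stay in the vector notation of \eqref{eq:dr:old}, introduce the auxiliary $w^{k+1}=(\id+\Sigma A)^{-1}(2z^{k+1}-\bar z^k)$ so that $Aw^{k+1}=\Sigma^{-1}(z^{k+1}-\bar z^{k+1})$, and let all non-telescoping contributions collapse into a single skew form $\scp{A(w^{k+1}-z)}{w^{k+1}-z}=0$. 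Both proofs have the same substance and the same single invocation of the subgradient inequality; the operator-level version buys conceptual transparency (it makes clear that skew-adjointness of $A$ is exactly what makes the Lagrangian bound telescope), while the paper's componentwise version is more directly reusable in the later accelerated lemmas where the cross terms no longer cancel exactly. Your last step — expanding $\norm[\mM]{\cdot}^2$ into the $\sigma^{-1}$- and $\tau^{-1}$-weighted pieces and substituting $(\id - \Sigma A)z=(x-\sigma\mK^* y,\,y+\tau\mK x)$ — is indeed just bookkeeping, as you say.
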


\begin{proof}
  The iteration~\eqref{eq:douglas-rachford} tells that
  $\frac{1}{\sigma}(\bar x^k - x^{k+1}) \in \subgrad \mF(x^{k+1})$
  and $\frac{1}{\tau}(\bar y^k - y^{k+1}) \in \subgrad \mG(y^{k+1})$,
  meaning, in particular, that
  \[
  \mF(x^{k+1}) - \mF(x) \leq
  \frac1\sigma \scp{\bar x^k - x^{k+1}}{x^{k+1} - x},
  \qquad
  \mG(y^{k+1}) - \mG(y) \leq
  \frac1\tau \scp{\bar y^k - y^{k+1}}{y^{k+1} - y}.
  \]
  With the identity
  \[
  \frac1\sigma
  \scp{\bar x^k - x^{k+1}}{x^{k+1} - x} =
  \frac1\sigma \scp{\bar x^k - \bar x^{k+1}}{\bar x^{k+1} - x}
  + \frac1\sigma\scp{\bar x^{k+1} - x^{k+1}}{x^{k+1} + \bar x^{k+1} - \bar x^k - x}
  \]
  and the update rule for $\bar x^{k+1}$ in~\eqref{eq:douglas-rachford}
  in the difference $\bar x^{k+1} - x^{k+1}$,
  we
  get
  \[
  \begin{aligned}
    \frac1\sigma
    &\langle \bar x^k - x^{k+1}, x^{k+1} - x \rangle
    -\scp{\mK x}{y^{k+1}} \\
    &
    = \frac1\sigma \scp{\bar x^k - \bar x^{k+1}}{\bar x^{k+1} - x}
    - \scp{\mK(x^{k+1} + \bar x^{k+1} - \bar x^k - x)}
    {y^{k+1} + \bar y^{k+1} - \bar y^k} + \scp{-\mK x}{y^{k+1}} \\
    &= \frac1\sigma \scp{\bar x^k - \bar x^{k+1}}{\bar x^{k+1} - x}
    - \scp{\mK x}{\bar y^k - \bar y^{k+1}}
    - \scp{\mK(x^{k+1} + \bar x^{k+1} - \bar x^k)}
    {y^{k+1} + \bar y^{k+1} - \bar y^k}.
  \end{aligned}
  \]
  Analogously, we may arrive at
  \[
  \begin{aligned}
    \frac1\tau
    &  \scp{\bar y^k -  y^{k+1}}{y^{k+1} - y}
    +\scp{\mK x^{k+1}}{y} \\
    & = \frac1\tau \scp{\bar y^k - \bar y^{k+1}}{\bar y^{k+1} - y}
    + \scp{\bar x^k - \bar x^{k+1}}{\mK^* y} +
    \scp{\mK(x^{k+1} + \bar x^{k+1} - \bar x^k)}
    {y^{k+1} + \bar y^{k+1} - \bar y^k}.
  \end{aligned}
  \]
  Consequently, the estimate
  \begin{align}
    \label{eq:dr-lagrangian-est0}
    \notag
    \mL(x^{k+1},y)
    &- \mL(x,y^{k+1})  \\
    \notag
    &\leq
      \frac1\sigma \langle \bar x^k - x^{k+1}, x^{k+1} - x \rangle
      -\scp{\mK x}{y^{k+1}} +
      \frac1\tau \langle\bar y^k - y^{k+1}, y^{k+1} - y \rangle
      +\scp{\mK x^{k+1}}{y}  \\
    \notag
    &= \frac1\sigma \scp{\bar x^k - \bar x^{k+1}}{\bar x^{k+1} - x + \sigma \mK^* y}
      + \frac1\tau \scp{\bar y^k - \bar y^{k+1}}{\bar y^{k+1} - y - \tau \mK x} \\
    &= \scp[\mM]{\bar z^k - \bar z^{k+1}}{\bar z^{k+1} -(\id - \Sigma A) z},
  \end{align}
  remembering the definitions~\eqref{eq:saddle-point-data}
  and~\eqref{eq:step_size_operator}.
  Employing the Hilbert space identity $\scp{u}{v} = \frac12
  \norm{u+v}^2 - \frac12 \norm{u}^2 - \frac12 \norm{v}^2$ leads us
  to~\eqref{eq:douglas-rachford-lagrangian-estimate}.
\end{proof}

\begin{lemma}
  \label{lem:douglas-rachford-fixed-points}
  Iteration~\eqref{eq:douglas-rachford} possesses the following
  properties:
  \begin{enumerate}
  \item
    A point $(x^*,y^*,\bar x^*,\bar y^*)$ is a fixed point if and only
    if $(x^*,y^*)$ is a saddle point for~\eqref{eq:saddle-point-prob} and
    $x^* - \sigma \mK^* y^* = \bar x^*$,
    $y^* + \tau \mK x^* = \bar y^*$.
  \item
    \sloppy
    If $\wlim_{i \to \infty} (\bar x^{k_i}, \bar y^{k_i}) = (\bar x^*, \bar y^*)$ and
    $\lim_{i \to \infty} (\bar x^{k_i} - \bar x^{k_i+1}, \bar y^{k_i} - \bar y^{k_i+1})
    = (0,0)$, then
    $(x^{k_i+1}, y^{k_i+1}, \bar x^{k_i+1}, \bar y^{k_i+1})$ converges weakly to
    a fixed point.
  \end{enumerate}
\end{lemma}

\begin{proof}
  Regarding the first point, observe that as~\eqref{eq:douglas-rachford}
  is equivalent to~\eqref{eq:general_proximal_point} with the
  choice~\eqref{eq:step_size_operator} and $\tilde z^k = z^k - \bar z^k$ for
  all $k$, fixed-points $(z^*,\bar z^*)$ are exactly the solutions
  to~\eqref{eq:saddle_point_inclusion_prob} of the form
  $(z^*, \tilde z^*)$, $\tilde z^* = z^* - \bar z^*$.
  With $A$ and $B$ according to~\eqref{eq:saddle-point-data},
  fixed-points are equivalent to solutions
  of~\eqref{eq:saddle_point_inclusion_prob} which obey
  $0 \in Az^* + Bz^*$ and $z^* - \Sigma A z^* = \bar z^*$ which means that
  $z^* = (x^*,y^*)$ is a solution of~\eqref{eq:saddle-point-prob} and
  $\bar z^*= (\bar x^*, \bar y^*)$ satisfies $\bar x^* = x^* - \sigma \mK^* y^*$
  and $\bar y^* = y^* + \tau \mK x^*$.

  \sloppy
  Next, suppose that some subsequence $(\bar x^{k_i}, \bar y^{k_i})$ converges weakly
  to $(\bar x^*, \bar y^*)$ and
  $\lim_{i \to \infty} (\bar x^{k_i+1} - \bar x^{k_i}, \bar y^{k_i+1} - \bar y^{k_i})
  = (0,0)$
  as $i \to \infty$.
  The iteration~\eqref{eq:douglas-rachford} then gives
  \[
  \begin{bmatrix}
    x^{k_i+1} \\ y^{k_i+1}
  \end{bmatrix}
  =
  \begin{bmatrix}
    \id & - \sigma \mK^* \\
    \tau \mK & \id
  \end{bmatrix}^{-1}
  \begin{bmatrix}
    \bar x^{k_i+1} + \sigma \mK^*(\bar y^{k_i+1} - \bar y^{k_i}) \\
    \bar y^{k_i+1} - \tau \mK(\bar x^{k_i+1} - \bar x^{k_i})
  \end{bmatrix},
  \]
  so the sequence $(x^{k_i+1},y^{k_i+1})$ converges weakly to
  $(x^*,y^*)$ with $x^* - \sigma \mK y^* = \bar x^*$ and
  $y^* + \tau \mK x^* = \bar y^*$ by weak sequential continuity of continuous
  linear operators.
  Now, iteration~\eqref{eq:douglas-rachford} can also be written as
  \[
  \left\{
    \begin{aligned}
      \frac1\sigma (\bar x^{k_i} - \bar x^{k_i+1})
      - \mK^*(\bar y^{k_i+1} - \bar y^{k_i})
      &\in \mK^* y^{k_i+1} + \subgrad \mF(x^{k_i+1}) \\
      \frac1\tau (\bar y^{k_i} - \bar y^{k_i+1})
      + \mK(\bar x^{k_i+1} - \bar x^{k_i})
      &\in  - \mK x ^{k_i+1} + \subgrad \mG(y^{k_i+1})
    \end{aligned}
  \right.
  \]
  where the left-hand side converges strongly to $(0,0)$. As
  $(x,y) \mapsto \bigl(\mK^* y + \subgrad \mF(x), -\mK x
  + \subgrad \mG(y) \bigr)$ is maximally
  monotone, it is in particular weak-strong closed. Consequently,
  $(0,0) \in \bigl(\mK^* y^* + \subgrad \mF(x^*),
  - \mK x^* + \subgrad \mG(y^*) \bigr)$,
  so $(x^*,y^*)$ is a saddle-point of~\eqref{eq:saddle-point-prob}.
  In particular, $x^* - \sigma \mK y^* = \bar x^*$ and
  $y^* + \tau \mK x^* = \bar y^*$, hence, the weak limit of
  $\seq{(x^{k_i+1},y^{k_i+1},\bar x^{k_i+1}, \bar y^{k_i+1})}$
  is a fixed-point of the
  iteration.
\end{proof}

\begin{proposition}
  \label{prop:dr-weak-convergence}
  \sloppy
  If~\eqref{eq:saddle-point-prob} possesses a solution, then
  iteration~\eqref{eq:douglas-rachford_}   converges weakly to a
  $(x^*,y^*, \bar x^*, \bar y^*)$ for which $(x^*,y^*)$ is a solution
  of~\eqref{eq:saddle-point-prob} and $(\bar x^*,\bar y^*)
  = (x^* - \sigma \mK^* y^*, y^* + \tau \mK x^*)$.
\end{proposition}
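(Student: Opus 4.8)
The plan is to prove weak convergence of the Douglas--Rachford iteration \eqref{eq:douglas-rachford_} by exploiting the Fejér-monotonicity that is implicit in the Lagrangian estimate of Lemma~\ref{lem:douglas-rachford-lagrangian}, together with the fixed-point characterization and the weak-closedness property established in Lemma~\ref{lem:douglas-rachford-fixed-points}. First I would fix a saddle-point $(x^*,y^*)$ of \eqref{eq:saddle-point-prob} (which exists by assumption) and set $\bar z^* = (\bar x^*, \bar y^*) = (x^* - \sigma \mK^* y^*,\, y^* + \tau \mK x^*)$, so that by Lemma~\ref{lem:douglas-rachford-fixed-points} the point $(z^*,\bar z^*)$ is a fixed point of the iteration and, writing $z^* = (x^*, y^*)$, one has $\bar z^* = (\id - \Sigma A)z^*$. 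Applying Lemma~\ref{lem:douglas-rachford-lagrangian} with $z = z^*$ and using that $\mL(x^{k+1}, y^*) - \mL(x^*, y^{k+1}) \geq 0$ by the saddle-point property, I obtain
\[
0 \leq \mL(x^{k+1},y^*) - \mL(x^*,y^{k+1}) \leq
\frac{\norm[\mM]{\bar z^k - \bar z^*}^2}{2}
- \frac{\norm[\mM]{\bar z^{k+1} - \bar z^*}^2}{2}
- \frac{\norm[\mM]{\bar z^k - \bar z^{k+1}}^2}{2}.
\]
This yields three consequences at once: the sequence $\seq{\norm[\mM]{\bar z^k - \bar z^*}}$ is non-increasing, hence $\seq{\bar z^k}$ is bounded in the $\norm[\mM]{\cdot}$-norm (equivalently in the Hilbert norm, since $\Sigma$ is a positive isomorphism); the telescoped sum $\sum_k \norm[\mM]{\bar z^k - \bar z^{k+1}}^2$ is finite, so in particular $\bar z^k - \bar z^{k+1} \to 0$ strongly; and $\mL(x^{k+1},y^*) - \mL(x^*,y^{k+1}) \to 0$.

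Next I would extract weak subsequential limits. Since $\seq{\bar z^k}$ is bounded, there is a subsequence $\bar z^{k_i} \weakto \bar z^\dagger = (\bar x^\dagger, \bar y^\dagger)$; along this subsequence $\bar z^{k_i} - \bar z^{k_i+1} \to 0$ strongly, so by the second part of Lemma~\ref{lem:douglas-rachford-fixed-points} the corresponding tail iterates $(x^{k_i+1}, y^{k_i+1}, \bar x^{k_i+1}, \bar y^{k_i+1})$ converge weakly to a fixed point $(x^\dagger, y^\dagger, \bar x^\dagger, \bar y^\dagger)$, with $(x^\dagger, y^\dagger)$ a saddle-point and $\bar z^\dagger = (\id - \Sigma A)z^\dagger$. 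The final step is the standard Opial-type argument for uniqueness of the weak limit: the preceding estimate, applied now with the generic saddle-point replaced by the specific fixed point just found, shows $\seq{\norm[\mM]{\bar z^k - \bar z^\dagger}}$ is also convergent (monotone); combining convergence of $\norm[\mM]{\bar z^k - \bar z^*}$ and $\norm[\mM]{\bar z^k - \bar z^\dagger}$ for any two subsequential fixed points $\bar z^*, \bar z^\dagger$ via the polarization identity forces equality of the two limits, hence $\bar z^k \weakto \bar z^\dagger$ for the whole sequence. Then the explicit linear relation
\[
\begin{bmatrix} x^{k+1} \\ y^{k+1} \end{bmatrix}
=
\begin{bmatrix} \id & -\sigma \mK^* \\ \tau \mK & \id \end{bmatrix}^{-1}
\begin{bmatrix} \bar x^{k+1} + \sigma \mK^*(\bar y^{k+1} - \bar y^k) \\
\bar y^{k+1} - \tau \mK(\bar x^{k+1} - \bar x^k) \end{bmatrix}
\]
from the proof of Lemma~\ref{lem:douglas-rachford-fixed-points}, together with $\bar z^k - \bar z^{k+1} \to 0$ and weak sequential continuity of bounded linear operators, shows $(x^k, y^k) \weakto (x^*, y^*)$ for the (now unambiguous) limit point, completing the proof.

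The main obstacle is the Opial argument proving uniqueness of the weak limit: one needs that $\seq{\norm[\mM]{\bar z^k - \bar w}}$ converges for \emph{every} fixed point $\bar w$ of the iteration — not merely for the $\bar z^*$ associated with an arbitrary saddle-point — which follows because every fixed point $(w, \bar w)$ has $(w_1, w_2)$ itself a saddle-point, so Lemma~\ref{lem:douglas-rachford-lagrangian} applies verbatim with $z = w$; the remaining care is in handling the degenerate metric $\mM$, but since under the choice \eqref{eq:step_size_operator} the $\mM$-seminorm of $(z,\tilde z)$ depends only on $\bar z = z - \tilde z$ and is a genuine norm in that variable equivalent to the Hilbert norm, all Fejér-monotonicity and Opial bookkeeping goes through in the $\bar z$-variable without essential modification.
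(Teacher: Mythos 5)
Your proof is correct and follows essentially the same route as the paper's: start from a saddle-point, apply Lemma~\ref{lem:douglas-rachford-lagrangian} to obtain Fej\'er monotonicity of $\norm[\mM]{\bar z^k - \bar z^*}$, deduce boundedness, summability of $\norm[\mM]{\bar z^k - \bar z^{k+1}}^2$, hence $\bar z^k - \bar z^{k+1} \to 0$, extract a weak subsequential limit and identify it as a fixed point via Lemma~\ref{lem:douglas-rachford-fixed-points}, then close with an Opial-type polarization argument for uniqueness and the linear relation to pass from $\seq{\bar z^k}$ to $\seq{(x^k,y^k)}$. The paper spells out the polarization step by explicit expansion of the four squared norms rather than naming it, and works throughout in the explicit $\frac1\sigma$--$\frac1\tau$ weighted norms rather than the abstract $\norm[\mM]{\cdot}$, but these are cosmetic differences; your observation that the degenerate $\mM$-seminorm is a genuine norm in the $\bar z$-variable is exactly why the paper's bookkeeping goes through.
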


\begin{proof}
  Let $(x',y')$ be a solution of~\eqref{eq:saddle-point-prob}. Then,
  $0 \leq \mL(x^{k+1},y') - \mL(x',y^{k+1})$ for each $k$, so
  by~\eqref{eq:douglas-rachford-lagrangian-estimate} and recursion
  we have for $k_0 \leq k$ that
  \begin{align}
    \notag
    \frac{\norm{\bar x^{k} - x' + \sigma \mK^* y'}^2}{2\sigma}
    + \frac{\norm{\bar y^{k} - y' - \tau \mK x'}^2}{2\tau}
    &+ \biggl[ \sum_{k' = k_0}^{k-1}
    \frac{\norm{\bar x^{k'} - \bar x^{k'+1}}^2}{2\sigma}
    + \frac{\norm{\bar y^{k'} - \bar y^{k'+1}}^2}{2\tau}
    \biggr] \\
    \label{eq:dr_boundedness}
    &\leq
    \frac{\norm{\bar x^{k_0} - x' + \sigma \mK^* y'}^2}{2\sigma}
    + \frac{\norm{\bar y^{k_0} - y' - \tau \mK x'}^2}{2\tau}.
  \end{align}
  With $x'' = x' - \sigma \mK^* y'$ and $y'' = y' + \tau \mK x'$
  and $d_k = \frac1{2\sigma}
  \norm{\bar x^k - x''}^2 + \frac1{2\tau} \norm{\bar y^k - y''}^2$,
  this implies that $\seq{d_k}$
  is a non-increasing sequence with limit $d^*$. Furthermore,
  $\bar x^k - \bar x^{k+1} \to 0$ as well as $\bar y^k - \bar y^{k+1} \to 0$
  as $k \to \infty$.

  Now,~\eqref{eq:dr_boundedness} says that $\seq{(\bar x^k,\bar y^k)}$ is
  bounded whenever~\eqref{eq:saddle-point-prob} has a solution,
  hence there exists a weak accumulation point
  $(\bar x^*, \bar y^*)$. Setting $(x^*,y^*) \in Z$ as the unique
  solution to the equation
  \begin{equation}
    \label{eq:weak-limit-saddle-point}
    \left\{
      \begin{aligned}
        x^* - \sigma \mK^* y^* &= \bar x^*, \\
        y^* + \tau \mK x^* &= \bar y^*,
      \end{aligned}
    \right.
  \end{equation}
  gives, by virtue of Lemma~\ref{lem:douglas-rachford-fixed-points},
  that $(x^*, y^*, \bar x^*, \bar y^*)$ is a fixed-point of the iteration
  with $(x^*,y^*)$ being a solution of~\eqref{eq:saddle-point-prob} and
  an accumulation point of $\seq{(x^k, y^k)}$.
  Suppose that $(\bar x^{**}, \bar y^{**})$ is another
  weak accumulation point of the same sequence and
  choose the corresponding
  $(x^{**}, y^{**})$ according to~\eqref{eq:weak-limit-saddle-point}.
  Then,
  \begin{align*}
    \frac1\sigma \scp{\bar x^k}{\bar x^{**} - \bar x^{*}} +
    &\frac1\tau \scp{\bar y^k}{\bar y^{**} - \bar y^{*}} =
      \frac{\norm{\bar x^k - x^* + \sigma \mK^* y^*}^2}{2\sigma}
      +   \frac{\norm{\bar y^k - y^* - \tau \mK x^*}^2}{2\tau}
    \\
    &
      -\frac{\norm{\bar x^k - x^{**} + \sigma \mK^* y^{**}}^2}{2\sigma}
      -   \frac{\norm{\bar y^k - y^{**} - \tau \mK x^{**}}^2}{2\tau}
      - \frac{\norm{\bar x^*}^2}{2\sigma} - \frac{\norm{\bar y^*}^2}{2\tau} \\
    &
      + \frac{\norm{\bar x^{**}}^2}{2\sigma}
      + \frac{\norm{\bar y^{**}}^2}{2\tau}.
  \end{align*}
  As both $(x^*,y^*)$ and $(x^{**}, y^{**})$ are solutions
  to~\eqref{eq:saddle-point-prob}, the right-hand side converges
  as a consequence of~\eqref{eq:dr_boundedness}. Plugging in the subsequences
  weakly converging to $(\bar x^*, \bar y^*)$ and $(\bar x^{**}, \bar y^{**})$,
  respectively,
  on the left-hand side, we see that both limits must coincide:
  \[
  \frac1\sigma \scp{\bar x^*}{\bar x^* - \bar x^{**}} +
  \frac1\tau \scp{\bar y^*}{\bar y^* - \bar y^{**}}
  =
  \frac1\sigma \scp{\bar x^{**}}{\bar x^* - \bar x^{**}} +
  \frac1\tau \scp{\bar y^{**}}{\bar y^* - \bar y^{**}},
  \]
  hence $\bar x^* = \bar x^{**}$ and $\bar y^* = \bar y^{**}$, implying
  that the whole sequence $\seq{(\bar x^k, \bar y^k)}$ converges weakly
  to $(\bar x^*, \bar y^*)$. Inspecting the
  iteration~\eqref{eq:douglas-rachford} we see that
  $\wlim_{k \to \infty}(x^{k} - \sigma \mK^* y^{k}, y^k + \tau \mK x^k)
  = (\bar x^*, \bar y^*)$ and as solving for $(x^k,y^k)$ constitutes a
  continuous linear operator, also $\wlim_{k \to \infty} (x^k, y^k)
  = (x^*,y^*)$, what remained to show.
\end{proof}

In particular, restricted primal-dual gaps vanish for the iteration
and can thus be used as a stopping criterion.

\begin{corollary}
  In the situation of Proposition~\ref{prop:dr-weak-convergence},
  for $X_0 \times Y_0 \subset \dom \mF \times \dom \mG$ bounded
  and containing a saddle-point, the associated
  restricted primal-dual gap obeys
  \[
  \gap_{X_0 \times Y_0}(x^k, y^k) \geq 0 \quad
  \text{for each} \quad k,
  \qquad
  \lim_{k \to \infty} \gap_{X_0 \times Y_0}(x^k, y^k) = 0.
  \]
\end{corollary}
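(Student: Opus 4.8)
The plan is to combine the weak convergence from Proposition~\ref{prop:dr-weak-convergence} with the non-negativity of restricted primal-dual gaps from Proposition~\ref{prop:restricted-error-func} and a telescoping argument based on Lemma~\ref{lem:douglas-rachford-lagrangian}. Non-negativity of $\gap_{X_0 \times Y_0}(x^k,y^k)$ is immediate: since $X_0 \times Y_0$ contains a saddle-point, Proposition~\ref{prop:restricted-error-func} gives $\gap_{X_0 \times Y_0}(x,y) \geq 0$ for every $(x,y)$, hence in particular for the iterates. The content of the corollary is therefore the convergence $\gap_{X_0 \times Y_0}(x^k,y^k) \to 0$.

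For the convergence, I would first show that $(x^k,y^k) \to (x^*,y^*)$ weakly with $(x^*,y^*)$ a saddle-point, which is exactly Proposition~\ref{prop:dr-weak-convergence}. Then I would argue that $\gap_{X_0 \times Y_0}$ is weakly lower semi-continuous: for each fixed $(x',y') \in X_0 \times Y_0$ the map $(x,y) \mapsto \mL(x,y') - \mL(x',y) = \scp{\mK x}{y'} + \mF(x) - \mG(y') - \scp{\mK x'}{y} - \mF(x') + \mG(y)$ is a sum of a weakly continuous affine term (the bilinear parts, by weak sequential continuity of $\mK$ and $\mK^*$) and the weakly lower semi-continuous convex functionals $\mF$ and $\mG$; taking the supremum over $(x',y') \in X_0 \times Y_0$ preserves weak lower semi-continuity. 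Hence $\liminf_{k\to\infty} \gap_{X_0 \times Y_0}(x^k,y^k) \geq \gap_{X_0 \times Y_0}(x^*,y^*) \geq 0$, but this only gives a lower bound; for the upper bound one must use the algorithmic estimate.

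The decisive step is the upper bound $\limsup_{k\to\infty} \gap_{X_0 \times Y_0}(x^k,y^k) = 0$. Here I would invoke Lemma~\ref{lem:douglas-rachford-lagrangian}: for any $(x,y) \in X_0 \times Y_0$ it yields $\mL(x^{k+1},y) - \mL(x,y^{k+1}) \leq \frac12\norm[\mM]{\bar z^k - (\id-\Sigma A)z}^2 - \frac12\norm[\mM]{\bar z^{k+1} - (\id-\Sigma A)z}^2$, so the Lagrangian differences are bounded above by the decrement of a non-negative quantity that converges (the sequence $\seq{(\bar x^k,\bar y^k)}$ is bounded by~\eqref{eq:dr_boundedness}, and $\bar z^k - \bar z^{k+1} \to 0$ from the same estimate). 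However, the supremum over $(x,y) \in X_0 \times Y_0$ on the left does not commute with this per-point estimate because the right-hand side depends on $(x,y)$. The clean way around this is to pass through the ergodic sequence: one shows $\gap_{X_0 \times Y_0}$ evaluated at the Cesàro averages decays like $\mO(1/k)$ by averaging the per-point estimate and then using convexity of $\mL(\cdot,y) - \mL(x,\cdot)$ in the first slot and concavity in the second — but the corollary is stated for the iterates $(x^k,y^k)$, not the ergodic ones. I therefore expect the author's argument to be more direct: since $\bar z^k - (\id-\Sigma A)z^k \to \bar z^* - (\id - \Sigma A)z^*$ weakly and $\norm[\mM]{\bar z^k - \bar z^{k+1}} \to 0$, one sharpens Lemma~\ref{lem:douglas-rachford-lagrangian} with a uniform-in-$(x,y)$ bound over the bounded set $X_0 \times Y_0$: the right-hand side factors as $\scp[\mM]{\bar z^k - \bar z^{k+1}}{\bar z^{k+1} - (\id-\Sigma A)z}$ (cf.~\eqref{eq:dr-lagrangian-est0}), whose absolute value is at most $\norm[\mM]{\bar z^k - \bar z^{k+1}}\bigl(\sup_{z\in X_0\times Y_0}\norm[\mM]{\bar z^{k+1}-(\id-\Sigma A)z}\bigr)$, and the second factor is bounded uniformly in $k$ because $\seq{\bar z^{k}}$ is bounded and $X_0, Y_0$ are bounded. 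Taking the supremum over $(x,y) \in X_0 \times Y_0$ then gives $\gap_{X_0 \times Y_0}(x^{k+1},y^{k+1}) \leq C\norm[\mM]{\bar z^k - \bar z^{k+1}} \to 0$, which together with non-negativity completes the proof.

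The main obstacle is precisely the interchange of the supremum defining the gap with the per-point algorithmic estimate; the resolution is the observation that the estimate's right-hand side has the factored form $\scp[\mM]{\bar z^k - \bar z^{k+1}}{\,\cdot\,}$ with the vanishing factor $\norm[\mM]{\bar z^k-\bar z^{k+1}}$ independent of $(x,y)$ and the remaining factor uniformly bounded over the bounded set $X_0 \times Y_0$, so the supremum is controlled.
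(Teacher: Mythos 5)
Your proof is correct and uses exactly the paper's argument: apply Cauchy--Schwarz to the factored form $\scp[\mM]{\bar z^k - \bar z^{k+1}}{\bar z^{k+1} - (\id - \Sigma A)z}$ from~\eqref{eq:dr-lagrangian-est0}, observe that $\norm[\mM]{\bar z^{k+1} - (\id - \Sigma A)z}$ is bounded uniformly over the bounded set $X_0 \times Y_0$ since $\seq{\bar z^k}$ is bounded, then take the supremum and use $\norm[\mM]{\bar z^k - \bar z^{k+1}} \to 0$. The digressions about lower semi-continuity and the ergodic sequence are unnecessary detours but do not affect the correctness of the argument you ultimately give.
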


\begin{proof}
      The convergence
  follows from applying the Cauchy--Schwarz inequality
  to~\eqref{eq:dr-lagrangian-est0}, taking the supremum and
  noting that $\norm[\mM]{\bar z^k - \bar z^{k+1}} \to 0$ as
  $k \to \infty$ as shown in Proposition~\ref{prop:dr-weak-convergence}.
\end{proof}
This convergence comes, however, without a rate, making it necessary
to switch to ergodic sequences.
Here, the usual $\mO(1/k)$ convergence speed follows
almost immediately.

\begin{theorem}
  \label{thm:douglas-rachford-ergodic-rate}
  Let $X_0 \subset \dom \mF$, $Y_0 \subset \dom \mG$ be bounded and such that
  $X_0 \times Y_0$ contains a saddle-point, i.e., a
  solution of~\eqref{eq:saddle-point-prob}.
  Then, in addition to $\wlim_{k \to \infty} (x^k,y^k) = (x^*,y^*)$
  the ergodic sequences
  \begin{equation}
    \label{eq:douglas-rachford-erg-seq}
    x_{\erg}^k = \frac1k \sum_{k' = 1}^{k} x^{k'}, \qquad
    y_{\erg}^k = \frac1k \sum_{k' = 1}^{k} y^{k'}
  \end{equation}
  converge weakly to $x^*$ and $y^*$, respectively,
  and the restricted primal-dual gap obeys
  \begin{equation}
    \label{eq:dr-O1k-convergence}
    \gap_{X_0 \times Y_0}(x_{\erg}^k, y_{\erg}^k) \leq \frac 1k
    \sup_{(x,y) \in X_0 \times Y_0} \
    \Bigl[
    \frac{\norm{\bar x^0 - x + \sigma \mK^* y}^2}{2\sigma} +
    \frac{\norm{\bar y^0 - y - \tau \mK x}^2}{2\tau}
    \Bigr]
    = \mO(1/k).
  \end{equation}
\end{theorem}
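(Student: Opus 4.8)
The plan is to build everything from the one-step estimate in Lemma~\ref{lem:douglas-rachford-lagrangian}. First I would fix an arbitrary $(x,y) \in X_0 \times Y_0$ and sum~\eqref{eq:douglas-rachford-lagrangian-estimate} over $k' = 0, 1, \dots, k-1$. The right-hand side telescopes: the terms $\frac{1}{2}\norm[\mM]{\bar z^{k'} - (\id - \Sigma A)z}^2 - \frac{1}{2}\norm[\mM]{\bar z^{k'+1} - (\id - \Sigma A)z}^2$ collapse to $\frac{1}{2}\norm[\mM]{\bar z^0 - (\id - \Sigma A)z}^2 - \frac{1}{2}\norm[\mM]{\bar z^k - (\id - \Sigma A)z}^2$, and the remaining squared-difference terms $-\frac{1}{2}\norm[\mM]{\bar z^{k'} - \bar z^{k'+1}}^2$ are all nonpositive, so they can be dropped. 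This gives
\[
\sum_{k'=0}^{k-1} \bigl[ \mL(x^{k'+1}, y) - \mL(x, y^{k'+1}) \bigr] \leq \frac{\norm[\mM]{\bar z^0 - (\id - \Sigma A)z}^2}{2} = \frac{\norm{\bar x^0 - x + \sigma \mK^* y}^2}{2\sigma} + \frac{\norm{\bar y^0 - y - \tau \mK x}^2}{2\tau},
\]
using the explicit form of $\scp[\mM]{\cdot}{\cdot}$ and $(\id - \Sigma A)z = (x + \sigma \mK^* y, y - \tau \mK x)$ recorded just before the lemma.

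Next I would exploit convexity–concavity of the Lagrangian to pass to the ergodic iterates. Since $x \mapsto \mL(x,y)$ is convex and $y \mapsto \mL(x,y)$ is concave, Jensen's inequality yields $\mL(x_{\erg}^k, y) \leq \frac{1}{k}\sum_{k'=1}^k \mL(x^{k'}, y)$ and $\mL(x, y_{\erg}^k) \geq \frac{1}{k}\sum_{k'=1}^k \mL(x, y^{k'})$, hence
\[
\mL(x_{\erg}^k, y) - \mL(x, y_{\erg}^k) \leq \frac{1}{k}\sum_{k'=0}^{k-1} \bigl[ \mL(x^{k'+1}, y) - \mL(x, y^{k'+1}) \bigr] \leq \frac{1}{k}\Bigl[ \frac{\norm{\bar x^0 - x + \sigma \mK^* y}^2}{2\sigma} + \frac{\norm{\bar y^0 - y - \tau \mK x}^2}{2\tau} \Bigr].
\]
Taking the supremum over $(x,y) \in X_0 \times Y_0$ — which is legitimate because the bound on the right is already stated uniformly — gives exactly~\eqref{eq:dr-O1k-convergence}, and the right-hand side is finite because $X_0 \times Y_0$ is bounded and $\mK$ is continuous, so it is indeed $\mO(1/k)$.

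It remains to address the weak convergence of the ergodic sequences. Here $\wlim_{k\to\infty}(x^k,y^k) = (x^*,y^*)$ is already supplied by Proposition~\ref{prop:dr-weak-convergence}, and it is a standard fact that Cesàro averages of a weakly convergent sequence converge weakly to the same limit: for any $\xi \in X$, $\scp{x_{\erg}^k}{\xi} = \frac1k\sum_{k'=1}^k \scp{x^{k'}}{\xi} \to \scp{x^*}{\xi}$ since the scalar sequence $\scp{x^{k'}}{\xi}$ converges to $\scp{x^*}{\xi}$, and similarly for $y_{\erg}^k$. I anticipate no real obstacle in this argument; the only point requiring a moment's care is the telescoping/dropping step, where one must verify the sign of the discarded terms and handle the index shift between the summation range $k' = 0, \dots, k-1$ in the estimate and $k' = 1, \dots, k$ in the definition of the ergodic sequences — but these match up precisely, since iterating over $k'$ produces iterates $x^{k'+1}$ for $k'+1$ ranging over $1, \dots, k$.
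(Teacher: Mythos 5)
Your proof is correct and follows essentially the same route as the paper's: sum the one-step Lagrangian estimate from Lemma~\ref{lem:douglas-rachford-lagrangian} over $k'=0,\dots,k-1$, telescope and drop the nonpositive difference terms, pass to the ergodic iterates via convexity-concavity of $\mL$ (Jensen), and take the supremum over $X_0 \times Y_0$. The paper proves weak convergence of the ergodic sequences by explicitly invoking the Stolz--Ces\`aro theorem, which is precisely the ``standard Ces\`aro averaging fact'' you cite, so there is no substantive difference.
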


\begin{proof}
  First of all, the assumptions state the existence of a saddle-point,
  so by Proposition~\ref{prop:dr-weak-convergence}, $\seq{(x^k,y^k)}$ converges
  weakly to a solution $(x^*,y^*)$
  of~\eqref{eq:saddle-point-prob}.
  To obtain the weak convergence of $\seq{x_{\erg}^k}$, test with
  an $x$ and utilize the Stolz--Ces\`aro theorem to obtain
  \[
  \lim_{k \to \infty}  \scp{x_{\erg}^k}{x} = \lim_{k \to \infty}
  \frac{\sum_{k'=1}^k \scp{x^{k'}}{x}}{\sum_{k' = 1}^k 1} =
  \lim_{k \to \infty} \scp{x^k}{x} = \scp{x^*}{x}.
  \]
  The property $\wlim_{k \to \infty} y_{\erg}^k = y^*$ can be proven analogously.

  Finally, in order to show the estimate on
  $\gap_{X_0 \times Y_0}(x^k_{\erg}, y^k_{\erg})$, observe that
  for each $(x,y) \in X_0 \times Y_0$, the function
  $(x',y') \mapsto \mL(x',y) - \mL(x,y')$ is convex. Thus,
  using~\eqref{eq:douglas-rachford-lagrangian-estimate} gives
  \begin{align*}
    \mL(x^k_{\erg}, y) - \mL(x, y^k_{\erg})
    &\leq \frac1k \sum_{k'=0}^{k-1} \mL(x^{k'+1},y) - \mL(x,y^{k'+1}) \\
    &\leq \frac1k \sum_{k' = 0}^{k-1}
      \frac1{\sigma}
      \Bigl(\frac{\norm{\bar x^{k'} - x + \sigma \mK^* y}^2}{2} -
      \frac{\norm{\bar x^{k'+1} - x + \sigma \mK^* y}^2}{2} \Bigr) \\
    & \qquad\qquad
      + \frac1\tau
      \Bigl(\frac{\norm{\bar y^{k'} - y - \tau \mK x}^2}{2} -
      \frac{\norm{\bar y^{k'+1} - y - \tau \mK x}^2}{2}
      \Bigr) \\
    &\leq \frac1k
      \Bigl( \frac{\norm{\bar x^0 - x + \sigma \mK^* y}^2}{2\sigma} +
      \frac{\norm{\bar y^0 - y - \tau \mK x}^2}{2\tau} \Bigr).
  \end{align*}
  Taking the supremum over all $(x,y) \in X_0 \times Y_0$
  gives $\gap_{X_0 \times Y_0}(x^k_{\erg}, y^k_{\erg})$ on the left-hand
  side and its estimate from above in~\eqref{eq:dr-O1k-convergence}.
        \end{proof}

\subsection{Preconditioning}
\label{subsec:dr-precon}

The iteration~\eqref{eq:douglas-rachford_} can now be preconditioned as
follows. We amend the dual variable $y$ by a preconditioning
variable $x_p \in X$ and the linear operator $\mK$ by
$\mH: X \to X$ linear, continuous and consider
the saddle-point problem
\begin{equation}
  \label{eq:precon-saddle-point}
  \min_{x \in \dom \mF} \max_{\substack{y \in \dom \mG,\\ x_p = 0}} \
  \scp{\mK x}{y} + \scp{\mH x}{x_p} + \mF(x) - \mG(y) - \mI_{\sett{0}}(x_p)
\end{equation}
whose solutions have exactly the form $(x^*, y^*, 0)$ with
$(x^*,y^*)$ being a saddle-point of~\eqref{eq:saddle-point-prob}.
Plugging this into the Douglas--Rachford iteration
for~\eqref{eq:precon-saddle-point} yields by
construction that $x_p^{k+1} = 0$ as well as
$\bar x^{k+1}_p = \tau \mH d^{k+1}$ for each $k$, so no new quantities
have to be introduced into the iteration. Introducing the notation
$T = \id + \sigma\tau \mK^*\mK$ and $M = \id + \sigma\tau(\mK^*\mK
+ \mH^*\mH)$, we see that the step $d^{k+1} = T^{-1} b^{k+1}$
in~\eqref{eq:douglas-rachford_} is just replaced by
\[
d^{k+1} = d^k + M^{-1}(b^{k+1} - Td^k),
\]
which can be interpreted as one iteration step of an operator
splitting method for the solution of $T d^{k+1} = b^{k+1}$ with
respect to the splitting $T = M - (M - T)$. The full iteration is
shown in Table~\ref{tab:precon-douglas-rachford} (the
term \emph{feasible preconditioner} is defined after the next paragraph).

\begin{table}
  \centering
  \begin{tabular}{p{0.15\linewidth}p{0.75\linewidth}}
    \toprule

    \multicolumn{2}{l}{\textbf{pDR}\ \ \ \ \textbf{Objective:}
    \hfill    Solve  \ \ $\min_{x\in \dom \mF} \max_{y \in  \dom \mG}
    \ \langle \mK x,y\rangle + \mF(x) - \mG(y)$
    \hfill\mbox{} }
    \\
    \midrule

    Initialization:
 &
   $(\bar x^0, \bar y^0, d^0) \in X \times Y \times X$
   initial guess, $\sigma > 0, \tau > 0$ step sizes, \\
 &
   $M$ feasible preconditioner for $T = \id + \sigma\tau\mK^*\mK$
    \\[\medskipamount]
    Iteration:
 &
   \raisebox{1em}[0mm][6.9\baselineskip]{\begin{minipage}[t]{1.0\linewidth}
       {       \renewcommand{\theHequation}{pdr}       \begin{equation}\label{eq:precon-douglas-rachford_}\tag{pDR}
         \left\{
           \begin{aligned}
             x^{k+1} &= (\id + \sigma \subgrad \mF)^{-1}(\bar x^k) \\
             y^{k+1} &= (\id + \tau \subgrad \mG)^{-1}(\bar y^k) \\
             b^{k+1} &= (2x^{k+1} - \bar x^k) - \sigma \mK^*(2y^{k+1} - \bar y^k) \\
             d^{k+1} &= d^k + M^{-1}(b^{k+1} - Td^k) \\
             \bar x^{k+1} &= \bar x^k - x^{k+1} + d^{k+1} \\
             \bar y^{k+1} &= y^{k+1} + \tau \mK d^{k+1}
           \end{aligned}
         \right.
       \end{equation}}
     \end{minipage}}
    \\
    \vspace*{-\smallskipamount}
    Output:
 &
   \vspace*{-\smallskipamount}
   $\seq{(x^k,y^k)}$  primal-dual sequence  \\
    \bottomrule
  \end{tabular}
  \caption{The preconditioned
    Douglas--Rachford iteration for the solution of
    convex-concave saddle-point problems of
    type~\eqref{eq:saddle-point-prob}.}
  \label{tab:precon-douglas-rachford}
\end{table}

Given $T = \id + \sigma\tau \mK^*\mK$, one usually aims at choosing
$M$ such that it corresponds to well-known solution techniques such as
the Gauss--Seidel or successive over-relaxation procedure in the
finite-dimensional case, for instance.
For this purpose, one has to ensure that a $\mH: X \to X$ linear and
continuous can be found for given $M$. The following definition from \cite{BS1}
gives a necessary and sufficient criterion.

\begin{definition}
  Let $M, T: X \to X$ linear, self-adjoint, continuous and positive
  semi-definite. Then, $M$ is called a \emph{feasible preconditioner}
  for $T$ if $M$ is boundedly invertible and $M - T$ is positive
  semi-definite.
\end{definition}

Indeed, given $\mH: X \to X$ linear and continuous, it is clear from the
definition of $M$ and $T$ that $M$ is a feasible preconditioner.
The other way around, if $M$ is a feasible preconditioner for $T$,
then one can take the square root of the operator $M - T$ and
consequently, there is linear, continuous and self-adjoint
$\mH: X \to X$ such that $\mH^2 = \frac1{\sigma\tau} (M-T)$ which implies
$M = \id + \sigma\tau(\mK^*\mK + \mH^*\mH)$. Summarized, the
preconditioned Douglas--Rachford iterations based
on~\eqref{eq:precon-saddle-point} exactly corresponds
to~\eqref{eq:precon-douglas-rachford_} with
$M$ being a feasible preconditioner. Its convergence then follows from
Proposition~\ref{prop:dr-weak-convergence} and
Theorem~\ref{thm:douglas-rachford-ergodic-rate}, with weak convergence
and asymptotic rate being maintained.

\begin{theorem}
  \label{thm:precon-douglas-rachford-ergodic-rate}
  Let $X_0 \subset \dom \mF$, $Y_0 \subset \dom \mG$ be bounded and such that
  $X_0 \times Y_0$ contains a saddle-point, i.e., a
  solution of~\eqref{eq:saddle-point-prob}.
  Further, let $M$ be a feasible preconditioner for $T = \id
  + \sigma\tau \mK^*\mK$.

  Then, for the iteration generated by~\eqref{eq:precon-douglas-rachford_},
  it holds that
  $\wlim_{k \to \infty} (x^k,y^k) = (x^*,y^*)$ with
  $(x^*, y^*)$ being a saddle-point of~\eqref{eq:saddle-point-prob}.
  Additionally,
  the ergodic sequence $\seq{(x_{\erg}^k, y_{\erg}^k)}$ given
  by~\eqref{eq:douglas-rachford-erg-seq} converges
  weakly to $(x^*,y^*)$
  and the restricted primal-dual gap obeys
  \begin{equation}
    \label{eq:pdr-O1k-convergence}
    \gap_{X_0 \times Y_0}(x_{\erg}^k, y_{\erg}^k) \leq \frac 1k
    \sup_{(x,y) \in X_0 \times Y_0} \
    \Bigl[
    \frac{\norm{\bar x^0 - x + \sigma \mK^* y}^2}{2\sigma}
    + \frac{\norm[M-T]{d^0 - x}^2}{2\sigma}
    + \frac{\norm{\bar y^0 - y - \tau \mK x}^2}{2\tau}
    \Bigr]
  \end{equation}
  where $\norm[M-T]{x}^2 = \scp{(M-T)x}{x}$.
\end{theorem}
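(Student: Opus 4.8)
The plan is to reduce the preconditioned iteration \eqref{eq:precon-douglas-rachford_} to the \emph{unpreconditioned} iteration \eqref{eq:douglas-rachford} applied to the enlarged saddle-point problem \eqref{eq:precon-saddle-point}, and then invoke Proposition~\ref{prop:dr-weak-convergence} and Theorem~\ref{thm:douglas-rachford-ergodic-rate} directly. The key observation, already sketched in the text preceding the statement, is that running \eqref{eq:douglas-rachford} on \eqref{eq:precon-saddle-point} with primal variable $x$, dual variables $(y, x_p)$, coupling operator $(x,y,x_p) \mapsto \mK^*y + \mH^* x_p$, and step sizes $\sigma$ for the primal block and $\tau$ for both dual blocks produces iterates in which $x_p^{k+1} = 0$ (because the resolvent of $\subgrad \mI_{\{0\}}$ is the zero map) and $\bar x_p^{k+1} = \tau\mH d^{k+1}$, so that the $x_p$-components are slaved to the remaining ones and can be eliminated. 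Carrying out this elimination, the dual block $\bar y$ updates exactly as in \eqref{eq:precon-douglas-rachford_}, while the implicit linear step $d^{k+1} = (\id + \sigma\tau(\mK^*\mK + \mH^*\mH))^{-1} b^{k+1} = M^{-1} b^{k+1}$ of the enlarged problem coincides with one step $d^{k+1} = d^k + M^{-1}(b^{k+1} - T d^k)$ of the splitting $T = M - (M-T)$ only if $d^k$ happens to be the relevant previous iterate; here one uses the identification $\bar x_p^k = \tau \mH d^k$ together with $\mH^2 = \frac1{\sigma\tau}(M - T)$ (valid precisely because $M$ is a feasible preconditioner, as established in the paragraph after the Definition) to check that the $\bar x_p$-recursion of the enlarged Douglas--Rachford iteration is algebraically the same as the $d$-recursion in \eqref{eq:precon-douglas-rachford_}. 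This is the step I expect to require the most care: one must keep track of which quantity plays the role of the "$\bar x_p$" slack across iterations and verify that the coupled updates for $\bar x^{k+1}$ and $\bar y^{k+1}$ in \eqref{eq:precon-douglas-rachford_} are obtained from \eqref{eq:douglas-rachford} with $\mK$ replaced by $(x \mapsto (\mK x, \mH x))$ after substituting $x_p^{k+1}=0$.

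Once this identification is in place, the weak convergence statement is immediate: Proposition~\ref{prop:dr-weak-convergence} applied to \eqref{eq:precon-saddle-point} gives $\wlim_{k\to\infty}(x^k, y^k, x_p^k) = (x^*, y^*, 0)$ with $(x^*, y^*, 0)$ a saddle-point of \eqref{eq:precon-saddle-point}, and since such saddle-points are exactly the $(x^*, y^*, 0)$ with $(x^*,y^*)$ a saddle-point of \eqref{eq:saddle-point-prob}, this yields $\wlim_{k\to\infty}(x^k,y^k) = (x^*,y^*)$ as claimed. Likewise, weak convergence of the ergodic sequences $\seq{(x_{\erg}^k, y_{\erg}^k)}$ follows from the corresponding statement in Theorem~\ref{thm:douglas-rachford-ergodic-rate} by projecting onto the $(x,y)$-components.

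For the ergodic gap bound \eqref{eq:pdr-O1k-convergence}, I would apply the estimate \eqref{eq:dr-O1k-convergence} of Theorem~\ref{thm:douglas-rachford-ergodic-rate} to the enlarged problem \eqref{eq:precon-saddle-point}. Its right-hand side is the supremum over test points $(x, y, x_p) \in X_0 \times Y_0 \times \{0\}$ of
\[
\frac{\norm{\bar x^0 - x + \sigma(\mK^* y + \mH^* x_p)}^2}{2\sigma}
+ \frac{\norm{\bar y^0 - y - \tau \mK x}^2}{2\tau}
+ \frac{\norm{\bar x_p^0 - x_p - \tau \mH x}^2}{2\tau};
\]
setting $x_p = 0$ kills the $\mH^* x_p$ term in the first summand, and the initialization $\bar x_p^0 = \tau \mH d^0$ turns the last summand into $\frac{1}{2\tau}\norm{\tau\mH d^0 - \tau\mH x}^2 = \frac{\tau}{2}\norm{\mH(d^0 - x)}^2 = \frac{\sigma\tau}{2\sigma}\scp{\mH^2(d^0-x)}{d^0-x} = \frac{1}{2\sigma}\scp{(M-T)(d^0-x)}{d^0-x} = \frac{\norm[M-T]{d^0-x}^2}{2\sigma}$, using $\sigma\tau\mH^2 = M-T$. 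Substituting these back recovers exactly the bracketed expression in \eqref{eq:pdr-O1k-convergence}, and since the ergodic $(x,y)$-iterates of the enlarged problem are by construction the ones in \eqref{eq:precon-douglas-rachford_} while the left-hand gap $\gap_{X_0\times Y_0\times\{0\}}$ on \eqref{eq:precon-saddle-point} agrees with $\gap_{X_0\times Y_0}$ on \eqref{eq:saddle-point-prob} (the $x_p$-test set being the singleton $\{0\}$ contributes nothing once $x_p^k = 0$), the claim follows. The one remaining subtlety is to confirm that $X_0 \times Y_0 \times \{0\}$ indeed contains a saddle-point of \eqref{eq:precon-saddle-point} and is bounded, which is clear since $X_0 \times Y_0$ contains a saddle-point of \eqref{eq:saddle-point-prob} by hypothesis and $\{0\}$ is bounded.
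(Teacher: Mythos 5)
Your proposal is correct and follows essentially the same route as the paper's proof: reinterpret \eqref{eq:precon-douglas-rachford_} as the ordinary Douglas--Rachford iteration \eqref{eq:douglas-rachford} applied to the enlarged saddle-point problem \eqref{eq:precon-saddle-point}, invoke Proposition~\ref{prop:dr-weak-convergence} and Theorem~\ref{thm:douglas-rachford-ergodic-rate} with test set $X_0 \times (Y_0 \times \{0\})$, and substitute $x_p = 0$, $\bar x_p^0 = \tau\mH d^0$, $\sigma\tau\mH^*\mH = M-T$ into \eqref{eq:dr-O1k-convergence} to recover \eqref{eq:pdr-O1k-convergence}. You are more explicit than the paper in verifying the identification of the $d$-update with the slaved $\bar x_p$-recursion, which the paper only references as "the above considerations"; that verification is sound.
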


\begin{proof}
  The above considerations show that one can find a $\mH: X \to X$ such
  that the iteration~\eqref{eq:precon-douglas-rachford_}
  corresponds to the Douglas--Rachford iteration for
  the solution of~\eqref{eq:precon-saddle-point}.
  Proposition~\ref{prop:dr-weak-convergence} then already yields weak
  convergence and Theorem~\ref{thm:douglas-rachford-ergodic-rate}
  gives the estimate~\eqref{eq:dr-O1k-convergence} on the
  restricted primal-dual gap associated with~\eqref{eq:precon-saddle-point}
  (note that we apply it for $X_0 \times (Y_0 \times \sett{0})$).
  Now, observe that as $x_p = 0$ must hold,
        the latter coincides with the restricted primal-dual gap associated
  with~\eqref{eq:saddle-point-prob}. Plugging in
  $\bar x^0_p = \tau \mH d^0$ and $M-T = \sigma\tau \mH^*\mH$
  into~\eqref{eq:dr-O1k-convergence} finally
  yields~\eqref{eq:pdr-O1k-convergence}.
\end{proof}

The condition that $M$ should be feasible for $T$ allows for a great
flexibility in preconditioning. Basically, classical splitting methods
such as symmetric Gauss--Seidel or symmetric successive over-relaxation
are feasible as a $n$-fold application thereof. We summarize the
most important statements regarding feasibility of classical operator
splittings in Table~\ref{tab:choice-preconditioner} and some general
results in the following propositions and refer
to \cite{BS1,BS2} for further details.

\begin{table}\centering \begin{tabular}{l@{\ \ }c@{\ \ }c@{\ \ }c@{\ \ }c@{\ \ }c}   \toprule
  Preconditioner & $T$ & $\lambda \id$ &$(\lambda+1)D$ &$M_{\SGS}$& $M_{\SSOR}$\\
  \midrule
  Conditions & --- & $\lambda \geq \|T\|$  &  $\lambda \geq
   \lambda_{\max}(T - D)$  & --- & $\omega \in (0,2)$\\[\smallskipamount]
Iteration type &
\begin{minipage}{0.12\linewidth}
  \centering
  Douglas--\\Rachford
\end{minipage}
& Richardson & Damped Jacobi &
\begin{minipage}{0.15\linewidth}
  \centering
  Symmetric Gauss--Seidel
\end{minipage}
  &
\begin{minipage}{0.12\linewidth}
  \centering
Symmetric\\
SOR
\end{minipage}
\\[\smallskipamount]
\bottomrule \end{tabular}

\medskip

$D = \diag(T)$, $T = D - E - E^*$, $E$ lower triangular,
$M_{\SGS} = (D - E) D^{-1} (D - E^*)$,

$M_{\SSOR} = (\tfrac1\omega D - E)
(\tfrac{2-\omega}{\omega}D)^{-1} (\tfrac 1\omega D - E^*)$

\caption{Summary of common preconditioners and possible conditions
  for feasibility.
} \label{tab:choice-preconditioner}
\end{table}

\begin{proposition}
  \label{prop:feasible-precon}
  Let $T: X \to X$ linear, continuous, symmetric
  and positive definite be given. Then, the following
  update schemes correspond to feasible preconditioners.
  \begin{enumerate}
  \item
    For $M_0: X \to X$ linear, continuous such that $M_0 - \tfrac12 T$
    is positive definite,
    \begin{equation}
      \label{eq:precon-symmetrization}
      \left\{
        \begin{aligned}
          d^{k+1/2} &= d^k + M_0^{-1}(b^{k+1} - Td^k), \\
          d^{k+1} &= d^{k+1/2} + M_0^{-*}(b^{k+1} - Td^{k+1/2}).
        \end{aligned}
      \right.
    \end{equation}
  \item
    For $M: X \to X$ feasible for $T$ and $n \geq 1$,
    \begin{equation}
      \label{eq:precon-nfold}
      \left\{
        \begin{aligned}
          d^{k+(i+1)/n} &= d^{k+i/n} + M^{-1}(b^{k+1} - Td^{k+i/n}), \\
          i &= 0,\ldots, n-1.
        \end{aligned}
      \right.
    \end{equation}
  \item
    For $T = T_1 + T_2$, with $T_1,T_2: X \to X$ linear, continuous,
    symmetric, $T_1$ positive definite, $T_2$ positive semi-definite and
    $M: X \to X$ feasible for $T_1$, $M - T_2$ boundedly invertible,
    \begin{equation}
      \label{eq:precon-additive}
      \left\{
        \begin{aligned}
          d^{k+1/2} &= d^k + M^{-1}\bigl((b^{k+1} - T_2d^k) - T_1d^k \bigr), \\
          d^{k+1} &= d^k + M^{-1}\bigl((b^{k+1} - T_2d^{k+1/2}) - T_1d^k \bigr).
        \end{aligned}
      \right.
    \end{equation}
  \end{enumerate}
\end{proposition}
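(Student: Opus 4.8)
The plan is to treat all three update schemes with one device: eliminate the auxiliary half-steps (the $d^{k+1/2}$, resp.\ the intermediate iterates $d^{k+i/n}$) and rewrite the net update in terms of the residual $r^{k+1} = b^{k+1} - Td^k$ alone, obtaining a single preconditioned Richardson step $d^{k+1} = d^k + \tilde M^{-1} r^{k+1}$ with $\tilde M^{-1}$ expressed explicitly through $M$ (resp.\ $M_0$) and $T$. It then remains to verify that $\tilde M$ is a feasible preconditioner for $T$, i.e.\ that $\tilde M^{-1}$ is self-adjoint, positive semi-definite and bounded (so that $\tilde M$ is a well-defined, self-adjoint, positive semi-definite, boundedly invertible operator) and that $\tilde M - T$ is positive semi-definite, which I would check in the equivalent form ``$T^{-1} - \tilde M^{-1}$ positive semi-definite'' since $T$ is positive definite. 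Eliminating the half-steps is a routine substitution; the content lies in producing the right congruence identities.

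For~\eqref{eq:precon-symmetrization} one obtains $\tilde M^{-1} = M_0^{-1} + M_0^{-*} - M_0^{-*}TM_0^{-1} = M_0^{-*}(M_0 + M_0^* - T)M_0^{-1}$. Self-adjointness is immediate, and since $\scp{(M_0 + M_0^* - T)x}{x} = 2\scp{(M_0 - \tfrac12 T)x}{x}$, the hypothesis on $M_0$ makes $M_0 + M_0^* - T$ positive definite, so $\tilde M^{-1}$ is a congruence of a positive operator and $\tilde M$ is well-defined, self-adjoint and positive definite. For the gap condition I would use the identity
\[
  T^{-1} - \tilde M^{-1} = \bigl[(M_0 - T)M_0^{-1}\bigr]^* T^{-1} \bigl[(M_0 - T)M_0^{-1}\bigr],
\]
whose right-hand side is positive semi-definite; hence $\tilde M^{-1} \preceq T^{-1}$ and, by operator monotonicity of inversion, $\tilde M \succeq T$.

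For~\eqref{eq:precon-nfold} I would argue through the error operator: if $Td^* = b^{k+1}$, each substep maps the error $e = d - d^*$ to $(\id - M^{-1}T)e$, so $d^{k+1} - d^* = (\id - M^{-1}T)^n(d^k - d^*)$, which gives $\tilde M^{-1}T = \id - (\id - M^{-1}T)^n$ and, upon iterating $M^{-1}(\id - TM^{-1}) = (\id - M^{-1}T)M^{-1}$, the symmetric form $\tilde M^{-1} = M^{-1}\sum_{j=0}^{n-1}(\id - TM^{-1})^j$; the same commutation relation shows each summand, hence $\tilde M^{-1}$, is self-adjoint. Since a feasible $M$ is positive definite, it has a square root, and conjugating by $M^{1/2}$ with $\hat T = M^{-1/2}TM^{-1/2}$ turns this into $M^{1/2}\tilde M^{-1}M^{1/2} = \sum_{j=0}^{n-1}(\id - \hat T)^j = f(\hat T)$ with $f(t) = \tfrac{1-(1-t)^n}{t}$, while feasibility of $M$ for $T$ forces $0 \preceq \hat T \preceq \id$. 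Functional calculus then yields $\id \preceq f(\hat T) \preceq \hat T^{-1}$, i.e.\ $M^{-1} \preceq \tilde M^{-1} \preceq T^{-1}$, which delivers all required properties (boundedness, positivity and bounded invertibility of $\tilde M^{-1}$, and $\tilde M \succeq T$). Alternatively one may induct on $n$, the two-fold step being~\eqref{eq:precon-symmetrization} with $M_0 = M$.

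For~\eqref{eq:precon-additive}, elimination of $d^{k+1/2}$ gives $\tilde M^{-1} = M^{-1} - M^{-1}T_2M^{-1} = M^{-1}(M - T_2)M^{-1}$, i.e.\ $\tilde M = M(M-T_2)^{-1}M$; self-adjointness and bounded invertibility follow from the hypotheses, while the sign of $M - T_2$ governs positivity of $\tilde M$. The key identity, obtained by writing $M = (M-T_2) + T_2$ and expanding, is $\tilde M - T_2 = M + T_2(M-T_2)^{-1}T_2$, whence $\tilde M - T = (M - T_1) + T_2(M-T_2)^{-1}T_2$ is positive semi-definite by feasibility of $M$ for $T_1$ together with positivity of $(M-T_2)^{-1}$. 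I expect this additive case to be the main obstacle: unlike in the other two schemes, positivity of $\tilde M$ (hence feasibility) is not automatic from $M$ being feasible for $T_1$ but genuinely uses the hypotheses on $M - T_2$, so the delicate point is to establish that $M - T_2$ is positive and boundedly invertible and to combine this correctly with $M - T_1 \succeq 0$.
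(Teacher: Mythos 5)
The paper itself does not prove Proposition~\ref{prop:feasible-precon}: after stating it, the authors only give two informal sentences of motivation per item and defer to \cite{BS1,BS2} for details. So there is no paper proof to compare against; I will assess your argument on its own merits.

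Your strategy — eliminate the auxiliary half-step to express the net update as $d^{k+1}=d^k+\tilde M^{-1}(b^{k+1}-Td^k)$ and then check feasibility of $\tilde M$ — is the right one, and your computations of $\tilde M^{-1}$ for all three schemes are correct. For~\eqref{eq:precon-symmetrization}, your identity $T^{-1}-\tilde M^{-1}=[(M_0-T)M_0^{-1}]^*T^{-1}[(M_0-T)M_0^{-1}]$ is correct; but note it presupposes $T$ coercive so that $T^{-1}$ is bounded. You can sidestep this: with $S=M_0+M_0^*-T$ one has $\tilde M = M_0S^{-1}M_0^*$ and the direct identity $\tilde M - T = (M_0 - T)^*\,S^{-1}\,(M_0-T)$, which immediately gives $\tilde M\succeq T$ without ever inverting $T$. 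For~\eqref{eq:precon-nfold}, your functional-calculus argument via $f(t)=\frac{1-(1-t)^n}{t}$ is sound; to avoid the unbounded operator $\hat T^{-1}$ you may again argue directly with $\tilde M - T = M^{1/2}\bigl[f(\hat T)^{-1}-\hat T\bigr]M^{1/2}$ and the scalar inequality $\tfrac1{f(t)}-t=\tfrac{t(1-t)^n}{1-(1-t)^n}\geq 0$ on $(0,1]$.

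Your hesitation about~\eqref{eq:precon-additive} is not excessive caution — it marks a real gap, and in fact the conclusion is \emph{false} under the hypotheses as literally stated. You correctly derived $\tilde M^{-1}=M^{-1}(M-T_2)M^{-1}$ and $\tilde M - T = (M-T_1)+T_2(M-T_2)^{-1}T_2$, and correctly observed that feasibility hinges on positivity of $M-T_2$. That positivity is \emph{not} implied by the listed hypotheses. A concrete counterexample: on $X=\RR^2$ take $T_1=\id$, $T_2=\diag(0,10)$, $T=\diag(1,11)$, and $M=2\,\id$. Then $M$ is feasible for $T_1$ (since $M-T_1=\id\succeq 0$), $T_2$ is positive semi-definite, and $M-T_2=\diag(2,-8)$ is boundedly invertible; yet $\tilde M^{-1}=M^{-1}(M-T_2)M^{-1}=\diag(\tfrac12,-2)$ is not positive, so $\tilde M=\diag(2,-\tfrac12)$ fails to be a feasible preconditioner. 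Thus the hypothesis ``$M-T_2$ boundedly invertible'' must be strengthened to ``$M-T_2$ positive definite'' (parallel to the explicit positivity hypothesis ``$M_0-\tfrac12 T$ positive definite'' in part~1). Under that corrected hypothesis your argument closes: $(M-T_2)^{-1}\succ 0$ makes $T_2(M-T_2)^{-1}T_2\succeq 0$, hence $\tilde M - T\succeq 0$, while $\tilde M = M(M-T_2)^{-1}M$ is self-adjoint, positive definite and boundedly invertible. So your proof is correct once you commit to the fact that the stated hypothesis is insufficient and replace bounded invertibility of $M-T_2$ by positive definiteness.
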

The update scheme~\eqref{eq:precon-symmetrization} is useful if one
has a non-symmetric ``feasible'' preconditioner $M_0$ for $\tfrac12 T$ as
then, the concatenation with the adjoint preconditioner will be symmetric
and feasible. Likewise,~\eqref{eq:precon-nfold} corresponds to the $n$-fold
application of a feasible preconditioner which is again feasible.
Finally,~\eqref{eq:precon-additive} is useful if $T$ can be split into
$T_1 + T_2$ for which $T_1$ can be easily preconditioned by $M$. Then,
one obtains a feasible preconditioner for $T$ only using forward evaluation of
$T_2$.

\section{Acceleration strategies}
\label{sec:acceleration}

We now turn to the main results of the paper.
As already mentioned in the introduction, in the case that the
functionals $\mF$ and $\mG$ possess strong
convexity properties, the iteration~\eqref{eq:douglas-rachford_} can
be modified such that the asymptotic ergodic rates for the restricted
primal-dual gap can further be improved.

\begin{definition}
  A convex functional $\mF: X \to \RR_\infty$ admits the
  \emph{modulus of strong convexity} $\gamma_1 \geq 0$ if for each
  $x, \bar x \in \dom \mF$ and $\lambda \in [0,1]$ we have
  \[
  \mF\bigl(\lambda x + (1-\lambda) \bar x\bigr)
  \leq
  \lambda \mF(x) + (1 - \lambda) \mF(\bar x)
  - \gamma_1 \lambda (1 - \lambda) \frac{\norm{x - \bar x}^2}{2}.
  \]
  Likewise, a concave functional $\mF$
  admits the \emph{modulus of strong concavity} $\gamma_1$ if
  $-\mF$ admits the modulus of strong convexity $\gamma_1$.
\end{definition}
Note that for $x \in \dom \subgrad \mF$, $\xi \in \subgrad \mF(x)$ and
$\bar x \in \dom \mF$ it follows that
\[
\mF(x) + \scp{\xi}{\bar x - x} + \frac{\gamma_1}{2} \norm{\bar x - x}^2
\leq \mF(\bar x).
\]
The basic idea for acceleration is now, in case the modulus of
strong convexity is positive, to use the additional quadratic terms
to obtain estimates of the
type
\begin{align*}
  \rho \bigl( \mL(x^{k+1}, y) - \mL(x,y^{k+1}) \bigr)
  &+ \res(\hat x^k, \hat
    y^k, x^{k+1}, y^{k+1}, x_\rho, y_\rho) \\
  & \leq \frac{\norm{\hat x^k - x_\rho + \sigma \mK^*y_\rho}^2}{2\sigma} +
    \frac{\norm{\hat y^k - y_\rho - \tau \mK x_\rho}^2}{2\tau} \\
  & \quad - \frac{1}{\vartheta} \Bigl[
    \frac{\norm{\hat x^{k+1} - x_\rho + \sigma' \mK^*y_\rho}^2}{2\sigma'} +
    \frac{\norm{\hat y^{k+1} - y_\rho - \tau' \mK x_\rho}^2}{2\tau'}
    \Bigr]
\end{align*}
for a modified iteration based on~\eqref{eq:douglas-rachford_}.
Here, the convex combination $(x_\rho, y_\rho) = (1- \rho)(x',y')
+ \rho(x,y)$ of $(x,y) \in \dom \mF \times \dom \mG$ and a saddle-point
$(x',y')$ is taken for $\rho \in [0,1]$, $(\hat x^k,\hat y^k)$ denotes
an additional primal-dual pair that is updated during the iteration,
$\res$ a non-negative residual functional
that will become apparent later,
$0 < \vartheta < 1$ an acceleration factor and $\sigma' > 0$, $\tau'
> 0$ step-sizes that are possibly different from $\sigma$ and $\tau$.
In order to achieve this, we consider a pair
$(\hat x^k, \hat y^k) \in X \times Y$ and set
\begin{equation}
  \label{eq:acc_update_x_y}
  \left\{
    \begin{aligned}
      x^{k+1} &= (\id + \sigma \subgrad \mF)^{-1}(\hat x^k), \\
      y^{k+1} &= (\id + \tau \subgrad \mG)^{-1}(\hat y^k), \\
    \end{aligned}
\right.
\end{equation}
as well as, with $\vartheta_p, \vartheta_d > 0$,
\begin{equation}\label{update:x:y:k+1}
  \left\{
    \begin{aligned}
      \tilde x^{k+1} &= \sigma \mK^*[y^{k+1} + \vartheta_p(\bar y^{k+1} - \hat y^k)], \\
      \tilde y^{k+1} &= - \tau \mK[x^{k+1} + \vartheta_d(\bar x^{k+1} - \hat x^k)],
    \end{aligned}
\right.
\end{equation}
where, throughout this section, we always denote
$\tilde x^{k+1} = x^{k+1} - \bar x^{k+1}$ and
$\tilde y^{k+1} = y^{k+1} - \bar y^{k+1}$. Observe that for
$\vartheta_p = \vartheta_d = 1$ and $(\hat x^{k+1}, \hat y^{k+1})
= (\bar x^{k+1}, \bar y^{k+1})$, the iteration~\eqref{eq:douglas-rachford} is
recovered which corresponds to~\eqref{eq:douglas-rachford_}.
In the following, however, we will choose these parameters differently depending
on the type of strong convexity of the $\mF$ and $\mG$.
To analyze the step~\eqref{eq:acc_update_x_y} and~\eqref{update:x:y:k+1},
let us first provide an estimate analogous
to~\eqref{eq:douglas-rachford-lagrangian-estimate} in
Lemma~\ref{lem:douglas-rachford-lagrangian}.

\begin{lemma}
  \label{lem:acc_lagrangian_est}
  Let $\rho \in [0,1]$, denote by
  $\gamma_1, \gamma_2 \geq 0$ the moduli of strong convexity
  for $\mF$ and $\mG$, respectively, and let $(x',y') \in \dom \mF
  \times \dom \mG$ be a saddle-point of $\mL$.
  For $(\hat x^k, \hat y^k) \in X \times Y$
  as well as $(x,y) \in \dom \mF \times \dom \mG$,
  the equations~\eqref{eq:acc_update_x_y} and~\eqref{update:x:y:k+1}
  imply
  \begin{align}
    \label{eq:acc_lagrangian_est}
    \notag
    \rho\bigl(\mL(x^{k+1},y) &- \mL(x,y^{k+1}) \bigr) \\
    \notag
    &
      \quad + \frac1{2\sigma}
      \norm{\bar x^{k+1} - x_\rho + \sigma \mK^*y_\rho}^2
      + \frac{\gamma_1}{1+\rho} \norm{x^{k+1} - x_\rho}^2
      + \frac1{2\sigma} \norm{\hat x^k - \bar x^{k+1}}^2 \\
    \notag
    & \quad + (1-\vartheta_p)
      \scp{\hat y^k - \bar y^{k+1}}
      {\mK[x^{k+1} + \bar x^{k+1} - \hat x^k - x_\rho]} \\
    \notag
    & \quad + \frac1{2\tau}
      \norm{\bar y^{k+1} - y_\rho - \tau \mK x_\rho}^2
      + \frac{\gamma_2}{1+\rho} \norm{y^{k+1} - y_\rho}^2
      + \frac1{2\tau} \norm{\hat y^k - \bar y^{k+1}}^2 \\
    \notag
    & \quad - (1 - \vartheta_d)
      \scp{y^{k+1} + \bar y^{k+1} - \hat y^k - y_\rho}
      {\mK[\hat x^k - \bar x^{k+1}]} \\
    & \leq \frac1{2\sigma}
      \norm{\hat x^k - x_\rho + \sigma \mK^*y_\rho}^2 + \frac1{2\tau}
      \norm{\hat y^k - y_\rho - \tau \mK x_\rho}^2
  \end{align}
  where $x_\rho = (1-\rho) x' + \rho x$ and
  $y_\rho = (1-\rho) y' + \rho y$.
\end{lemma}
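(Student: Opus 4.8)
The plan is to mimic the proof of Lemma~\ref{lem:douglas-rachford-lagrangian}, but now carrying along the extra quadratic terms produced by strong convexity and keeping track of the modified update~\eqref{update:x:y:k+1}. First I would start from the strong-convexity subgradient inequalities: since $\frac1\sigma(\hat x^k - x^{k+1}) \in \subgrad\mF(x^{k+1})$ and $\frac1\tau(\hat y^k - y^{k+1}) \in \subgrad\mG(y^{k+1})$ by~\eqref{eq:acc_update_x_y}, I get, for any $(x,y) \in \dom\mF\times\dom\mG$,
\[
\mF(x^{k+1}) - \mF(x) \leq \frac1\sigma\scp{\hat x^k - x^{k+1}}{x^{k+1} - x} - \frac{\gamma_1}{2}\norm{x^{k+1} - x}^2,
\]
and the analogous inequality for $\mG$. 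The key trick is that I do not apply these at the point $(x,y)$ directly but interpolate: I evaluate the $\mF$-inequality at $x_\rho$ and the $\mG$-inequality at $y_\rho$ and use convexity of $\mF$ and $\mG$ together with $x_\rho = (1-\rho)x' + \rho x$ to convert $\mF(x^{k+1}) - \mF(x_\rho) \geq (1-\rho)\bigl(\mF(x^{k+1})-\mF(x')\bigr) + \rho\bigl(\mF(x^{k+1})-\mF(x)\bigr) + \text{(strong convexity slack)}$; the term with $x'$ is controlled because $(x',y')$ is a saddle-point, and what survives is $\rho$ times the Lagrangian difference plus a leftover $\gamma_1$-term of the form $\gamma_1\rho(1-\rho)\norm{x^{k+1}-x}^2/2$ which, combined with the $-\gamma_1\norm{x^{k+1}-x_\rho}^2/2$ from the subgradient inequality and the identity $\norm{x^{k+1}-x_\rho}^2 = \text{(convex combination terms)}$, produces the $\frac{\gamma_1}{1+\rho}\norm{x^{k+1}-x_\rho}^2$ term in the statement. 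The same bookkeeping on the dual side gives the $\frac{\gamma_2}{1+\rho}\norm{y^{k+1}-y_\rho}^2$ term.

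Next, I would process the bilinear coupling exactly as in Lemma~\ref{lem:douglas-rachford-lagrangian}: rewrite $\frac1\sigma\scp{\hat x^k - x^{k+1}}{x^{k+1}-x_\rho}$ by inserting $\bar x^{k+1}$, i.e.,
\[
\frac1\sigma\scp{\hat x^k - x^{k+1}}{x^{k+1}-x_\rho} = \frac1\sigma\scp{\hat x^k - \bar x^{k+1}}{\bar x^{k+1} - x_\rho} + \frac1\sigma\scp{\bar x^{k+1} - x^{k+1}}{x^{k+1} + \bar x^{k+1} - \hat x^k - x_\rho},
\]
and likewise on the dual side. Now, crucially, $\bar x^{k+1} - x^{k+1} = -\tilde x^{k+1} = -\sigma\mK^*[y^{k+1} + \vartheta_p(\bar y^{k+1}-\hat y^k)]$ from~\eqref{update:x:y:k+1}, so the second inner product becomes $-\scp{\mK[x^{k+1}+\bar x^{k+1}-\hat x^k-x_\rho]}{y^{k+1}+\vartheta_p(\bar y^{k+1}-\hat y^k)}$. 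I would split the $\vartheta_p$-factor as $\vartheta_p = 1 - (1-\vartheta_p)$, so that the ``$1$''-part combines with the corresponding ``$1$''-part coming from the dual side (using $\bar y^{k+1} - y^{k+1} = -\tilde y^{k+1} = \tau\mK[x^{k+1}+\vartheta_d(\bar x^{k+1}-\hat x^k)]$ and $\vartheta_d = 1-(1-\vartheta_d)$) to cancel — just as the terms $\scp{\mK(x^{k+1}+\bar x^{k+1}-\bar x^k)}{y^{k+1}+\bar y^{k+1}-\bar y^k}$ cancelled in the unaccelerated proof — while the ``$-(1-\vartheta_p)$'' and ``$-(1-\vartheta_d)$'' parts are precisely the two cross terms
\[
(1-\vartheta_p)\scp{\hat y^k - \bar y^{k+1}}{\mK[x^{k+1}+\bar x^{k+1}-\hat x^k - x_\rho]}, \qquad -(1-\vartheta_d)\scp{y^{k+1}+\bar y^{k+1}-\hat y^k - y_\rho}{\mK[\hat x^k - \bar x^{k+1}]}
\]
appearing on the left of~\eqref{eq:acc_lagrangian_est}. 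Also the terms $-\scp{\mK x_\rho}{y^{k+1}}$ and $+\scp{\mK x^{k+1}}{y_\rho}$ needed to complete $\mL(x^{k+1},y_\rho) - \mL(x_\rho,y^{k+1})$ get absorbed into the $\scp{\hat x^k-\bar x^{k+1}}{\cdot}$-type terms via $\scp{\mK x_\rho}{\bar y^k - \bar y^{k+1}}$-style rearrangements, producing the shifted arguments $\bar x^{k+1} - x_\rho + \sigma\mK^*y_\rho$ and $\bar y^{k+1} - y_\rho - \tau\mK x_\rho$.

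Finally, I would apply the three-term identity $\scp{u}{v} = \frac12\norm{u+v}^2 - \frac12\norm{u}^2 - \frac12\norm{v}^2$ to the surviving $\frac1\sigma\scp{\hat x^k-\bar x^{k+1}}{\bar x^{k+1}-x_\rho+\sigma\mK^*y_\rho}$ and its dual analogue. This turns $\frac1\sigma\scp{\hat x^k-\bar x^{k+1}}{\bar x^{k+1}-x_\rho+\sigma\mK^*y_\rho}$ into $\frac1{2\sigma}\norm{\hat x^k - x_\rho + \sigma\mK^*y_\rho}^2 - \frac1{2\sigma}\norm{\bar x^{k+1} - x_\rho + \sigma\mK^*y_\rho}^2 - \frac1{2\sigma}\norm{\hat x^k - \bar x^{k+1}}^2$, which, after moving the two negative terms to the left, gives exactly the $\norm{\bar x^{k+1} - x_\rho + \sigma\mK^*y_\rho}^2$ and $\norm{\hat x^k - \bar x^{k+1}}^2$ terms on the left of~\eqref{eq:acc_lagrangian_est}, with the $\norm{\hat x^k - x_\rho + \sigma\mK^*y_\rho}^2$ term on the right; the dual side is symmetric. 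The main obstacle is purely organizational: carefully tracking the $\rho$-interpolation in the strong-convexity step so that the $\gamma_1,\gamma_2$ coefficients land exactly as $\frac{\gamma_i}{1+\rho}$ (this requires the algebraic identity relating $\norm{x^{k+1}-x_\rho}^2$, $\norm{x^{k+1}-x'}^2$, $\norm{x^{k+1}-x}^2$ and $\norm{x-x'}^2$ for a convex combination, together with using that $\norm{x_\rho - x'}^2 \geq 0$-type slack is discarded), and making sure the ``$\vartheta_p = 1 - (1-\vartheta_p)$'' splitting is done consistently on both primal and dual sides so that exactly the two advertised cross terms remain and nothing else. Everything else is the same Hilbert-space identity manipulation as in Lemma~\ref{lem:douglas-rachford-lagrangian}.
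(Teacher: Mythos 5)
Your overall strategy matches the paper's proof: you correctly (i) start from the iteration subgradient inequalities at $x_\rho$ and $y_\rho$ sharpened by strong convexity, (ii) insert $\bar x^{k+1},\bar y^{k+1}$ and substitute~\eqref{update:x:y:k+1}, (iii) split $\vartheta_p = 1 - (1-\vartheta_p)$ and $\vartheta_d = 1 - (1-\vartheta_d)$ so that the ``$1$''-parts cancel against the added coupling $\scp{\mK x^{k+1}}{y_\rho}-\scp{\mK x_\rho}{y^{k+1}}$ while the remaining parts produce exactly the two cross terms, and (iv) finish with $\scp{u}{v}=\tfrac12\norm{u+v}^2-\tfrac12\norm{u}^2-\tfrac12\norm{v}^2$. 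The bilinear bookkeeping in your second and third paragraphs is correct and is essentially the paper's computation.

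Where your plan is not yet a proof is in deriving the coefficient $\tfrac{\gamma_i}{1+\rho}$. As described, it only yields $\tfrac{\gamma_i}{2}$, which is strictly weaker for $\rho<1$ (and at $\rho=0$, used in Proposition~\ref{prop:acc1_lagrangian_est}, the full $\gamma_1$ is needed). Two concrete issues. First, the strong-convexity slack from $x_\rho = (1-\rho)x'+\rho x$ is $\tfrac{\gamma_1}{2}\rho(1-\rho)\norm{x-x'}^2$, not $\tfrac{\gamma_1}{2}\rho(1-\rho)\norm{x^{k+1}-x}^2$ as you wrote. Second, and more importantly, ``the term with $x'$ is controlled because $(x',y')$ is a saddle-point'' must be used quantitatively: you need the strong-convexity lower bound $\tfrac{\gamma_1}{2}\norm{x^{k+1}-x'}^2 + \tfrac{\gamma_2}{2}\norm{y^{k+1}-y'}^2 \leq \mL(x^{k+1},y')-\mL(x',y^{k+1})$, not merely nonnegativity. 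Keeping these two quadratic contributions and combining them via the parallelogram inequality $\norm{x^{k+1}-x}^2 \leq 2\norm{x^{k+1}-x'}^2 + 2\norm{x-x'}^2$ gives $\tfrac{1-\rho}{1+\rho}\tfrac{\gamma_1}{2}\norm{x^{k+1}-x_\rho}^2 \leq (1-\rho)\tfrac{\gamma_1}{2}\bigl[\norm{x^{k+1}-x'}^2 + \rho\norm{x-x'}^2\bigr]$; adding this to the $\tfrac{\gamma_1}{2}\norm{x^{k+1}-x_\rho}^2$ from the subgradient inequality at $x_\rho$ is what produces $\tfrac{\gamma_1}{1+\rho}$. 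Your mention of a ``$\norm{x_\rho-x'}^2 \geq 0$-type slack'' is not the right ingredient here.
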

\begin{proof}
  First, optimality of $(x',y')$ according
  to~\eqref{eq:saddle-point-optimality}, strong convexity
  and the subgradient inequality give
  \[
  \frac{\gamma_1}{2} \norm{x^{k+1} - x'}^2
  + \frac{\gamma_2}{2} \norm{y^{k+1} - y'}^2
  \leq \mL(x^{k+1}, y') - \mL(x', y^{k+1}).
  \]
  Combined with strong concavity of
  $x \mapsto -\mL(x, y^{k+1})$ and $y \mapsto \mL(x^{k+1},y)$
  with moduli $\gamma_1$ and
  $\gamma_2$, respectively,
  this yields for the convex combinations
  $x_\rho$ and $y_\rho$ that
  \begin{multline*}
    (1-\rho) \frac{\gamma_1}{2}
    \bigl[ \norm{x^{k+1} - x'}^2
    + \rho \norm{x - x'}^2 \bigr]
    + (1-\rho) \frac{\gamma_2}{2}
    \bigl[ \norm{y^{k+1} - y'}^2
    + \rho \norm{y - y'}^2 \bigr]
    \\
    + \rho\bigl( \mL(x^{k+1}, y) - \mL(x, y^{k+1}) \bigr)
    \leq \mL(x^{k+1}, y_\rho) - \mL(x_\rho, y^{k+1}).
  \end{multline*}
  Employing convexity once more, we arrive at
  \[
  \frac{1- \rho}{1 + \rho} \frac{\gamma_1}{2} \norm{x^{k+1} - x_\rho}^2
  \leq
  (1-\rho) \frac{\gamma_1}{2}
  \bigl[ \norm{x^{k+1} - x'}^2
  + \rho \norm{x - x'}^2 \bigr]
  \]
  and the analogous statement for the dual variables. Putting things
  together yields
  \begin{multline}
    \label{eq:strong_convexity_lagrange_est}
    \frac{1- \rho}{1 + \rho}
    \Bigl[ \frac{\gamma_1}{2} \norm{x^{k+1} - x_\rho}^2
    + \frac{\gamma_2}{2} \norm{y^{k+1} - y_\rho}^2 \Bigr]
    + \rho \bigl( \mL(x^{k+1},y) - \mL(x, y^{k+1}) \bigr) \\
    \leq \mL(x^{k+1},y_\rho) - \mL(x_\rho, y^{k+1}).
  \end{multline}
  The next steps aim at estimating the right-hand side
  of~\eqref{eq:strong_convexity_lagrange_est}.
  Using again the strong convexity of $\mF$ and $\mG$ and
  the subgradient inequality
  for~\eqref{eq:acc_update_x_y}, we obtain
  \begin{align*}
    \frac{\gamma_1}2 \norm{x^{k+1} - x_\rho}^2
    &+ \mF(x^{k+1}) - \mF(x_\rho) \\
    &\leq \frac1\sigma
      \scp{\hat x^k - x^{k+1}}{x^{k+1} - x_\rho}  \\
    & = \frac1\sigma \scp{\hat x^k - x^{k+1} + \tilde x^{k+1}}
      {x^{k+1} - \tilde x^{k+1} - x_\rho} + \frac1\sigma \scp{\hat x^k
      + \tilde x^{k+1} - 2 x^{k+1} + x_\rho}{\tilde x^{k+1}}
    \\
    &= \frac1\sigma \scp{\hat x^k - \bar x^{k+1}}{\bar x^{k+1} - x_\rho}
      - \frac1\sigma \scp{x^{k+1} + \bar x^{k+1} - \hat x^k - x_\rho}
      {\tilde x^{k+1}}
  \end{align*}
  as well as
  \begin{align*}
    \frac{\gamma_2}2\norm{y^{k+1} - y_\rho}^2 &+ \mG(y^{k+1}) - \mG(y_\rho)
    \\
    &
    \leq \frac1\tau \scp{\hat y^k - \bar y^{k+1}}{\bar y^{k+1} - y_\rho}
           - \frac1\tau \scp{y^{k+1} + \bar y^{k+1} - \hat y^k - y_\rho}
           {\tilde y^{k+1}}.
  \end{align*}
  Collecting the terms involving $\tilde x^{k+1}$ and $\tilde y^{k+1}$,
  adding the primal-dual
  coupling terms $\scp{\mK x^{k+1}}{y_\rho}  - \scp{\mK x_\rho}{y^{k+1}}$
  as well as employing~\eqref{update:x:y:k+1} gives
  \begin{align*}
    - \frac1\sigma
    \scp{x^{k+1} + \bar x^{k+1} - \hat x^k - x_\rho}{\tilde x^{k+1}}
    &- \frac1\tau \scp{y^{k+1} + \bar y^{k+1}
      - \hat y^k - y_\rho}{\tilde y^{k+1}} \\
    + \scp{\mK x^{k+1}}{y_\rho}  - \scp{\mK x_\rho}{y^{k+1}}
    &= \scp{y^{k+1} + \bar y^{k+1} - \hat y^k - y_\rho}{\mK[x^{k+1}
      + \bar x^{k+1} - \hat x^k]} \\
    & \quad
      - \scp{y^{k+1} + \bar y^{k+1} - \hat y^k}{\mK[x^{k+1}
      + \bar x^{k+1} - \hat x^k - x_\rho]} \\
    &\quad
      + (1-\vartheta_p)
      \scp{\bar y^{k+1} - \hat y^k}
      {\mK[x^{k+1} + \bar x^{k+1} - \hat x^k - x_\rho]} \\
    &\quad
      -(1 - \vartheta_d)
      \scp{y^{k+1} + \bar y^{k+1} - \hat y^k - y_\rho}
      {\mK[\bar x^{k+1} - \hat x^k]}
    \\
    &\quad + \scp{\mK x^{k+1}}{y_\rho}  - \scp{\mK x_\rho}{y^{k+1}} \\
    &= (1-\vartheta_p)
      \scp{\bar y^{k+1} - \hat y^k}
      {\mK[x^{k+1} + \bar x^{k+1} - \hat x^k - x_\rho]} \\
    & \quad
      -(1 - \vartheta_d)
      \scp{y^{k+1} + \bar y^{k+1} - \hat y^k - y_\rho}
      {\mK[\bar x^{k+1} - \hat x^k]} \\
    &\quad + \scp{y_\rho}{\mK[\hat x^k - \bar x^{k+1}]}
      - \scp{\hat y^k - \bar y^{k+1}}{\mK x_\rho}.
  \end{align*}
  It then follows that
  \begin{align}
    \label{eq:acc1_lagrangian_est0}
    \notag
    \mF(x^{k+1})
    &- \mF(x_\rho) + \mG(y^{k+1}) - \mG(y_\rho)
      + \scp{\mK x^{k+1}}{y_\rho} - \scp{\mK x_\rho}{y^{k+1}}
    \\
    \notag
    &\leq
      \frac1\sigma \scp{\hat x^k - \bar x^{k+1}}{\bar x^{k+1} - x_\rho + \sigma \mK^*y_\rho}
      + \frac1\tau \scp{\hat y^k - \bar y^{k+1}}{\bar y^{k+1} - y_\rho - \tau \mK x_\rho}
    \\
    \notag
    & \quad
      - \frac{\gamma_1}2 \norm{x^{k+1} - x_\rho}^2
      + (1-\vartheta_p)
      \scp{\bar y^{k+1} - \hat y^k}
      {\mK[x^{k+1} + \bar x^{k+1} - \hat x^k - x_\rho]} \\
    &\quad
      - \frac{\gamma_2}2 \norm{y^{k+1} - y_\rho}^2
      -(1 - \vartheta_d)
      \scp{y^{k+1} + \bar y^{k+1} - \hat y^k - y_\rho}
      {\mK[\bar x^{k+1} - \hat x^k]}.
  \end{align}
  As before,
  employing the Hilbert-space identity $\scp{u}{v} = \tfrac12 \norm{u+v}^2
  - \tfrac12 \norm{u}^2 - \tfrac12 \norm{v}^2$,
  plugging the result into~\eqref{eq:strong_convexity_lagrange_est}
  and rearranging finally
  yields~\eqref{eq:acc_lagrangian_est}.
\end{proof}

\subsection{\texorpdfstring{Strong convexity of $\mF$}{Strong convexity of F}}

Now, assume that $\gamma_1 > 0$ while for $\gamma_2$, no restrictions are
made, such that we set $\gamma_2 = 0$. In this case, we can rewrite the
quadratic terms involving $\bar x^{k+1}$ and $\bar y^{k+1}$
in~\eqref{eq:acc_lagrangian_est} as follows.

\begin{lemma}
  \label{lem:acc_sqr_norm_repr}
  In the situation of Lemma~\ref{lem:acc_lagrangian_est} and for
  $\gamma > 0$ we have
  \begin{align}
    \label{eq:acc_quadratic_terms}
    \notag
    \frac1{2\sigma} \| \bar x^{k+1} - x_\rho +
    &\sigma \mK^*y_\rho \|^2
      + \frac\gamma2 \norm{x^{k+1} - x_\rho}^2
      + \frac1{2\tau} \norm{\bar y^{k+1} - y_\rho - \tau \mK x_\rho}^2 \\
    \notag
    & =
      \frac1\vartheta \Bigl[
      \frac1{2\sigma'} \norm{\hat x^{k+1} - x_\rho + \sigma' \mK^*y_\rho}^2
      + \frac1{2\tau'} \norm{\hat y^{k+1} - y_\rho - \tau' \mK x_\rho}^2
      \Bigr] \\
    \notag
    &  \quad
      + \frac{1 - \vartheta}{\vartheta}
      \Bigl[
      \frac{1}{\tau} \scp{y^{k+1} - y_\rho}{\tau \mK x_\rho + \tilde y^{k+1}}
      - \frac1\sigma \scp{x^{k+1} - x_\rho}{\sigma \mK^*y_\rho - \tilde x^{k+1}}
      \Bigr] \\
    & \quad
      - \frac{\sigma \gamma}{2\tau}  \norm{\tau \mK x_\rho + \tilde y^{k+1}}^2
  \end{align}
  where
  \begin{equation}
    \label{eq:acc_final_step}
    \vartheta = \frac{1}{\sqrt{1 + \sigma\gamma}}, \qquad
    \left\{
      \begin{aligned}
        \sigma' &= \vartheta \sigma, \\
        \tau' &= \vartheta^{-1} \tau,
      \end{aligned}
    \right.
    \qquad
    \left\{
      \begin{aligned}
        \hat x^{k+1} &= x^{k+1} - \vartheta \tilde x^{k+1} , \\
        \hat y^{k+1} &= y^{k+1} - \vartheta^{-1} \tilde y^{k+1}.
      \end{aligned}
    \right.
  \end{equation}
\end{lemma}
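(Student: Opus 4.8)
The statement is a purely algebraic identity between quadratic forms, so the plan is to verify it by expanding both sides and matching coefficients. First I would introduce abbreviations that isolate the vectors that actually occur on both sides. Recalling the section-wide convention $\tilde x^{k+1} = x^{k+1} - \bar x^{k+1}$, $\tilde y^{k+1} = y^{k+1} - \bar y^{k+1}$, set
\[
u = x^{k+1} - x_\rho, \qquad v = y^{k+1} - y_\rho, \qquad
p = \sigma \mK^* y_\rho - \tilde x^{k+1}, \qquad q = \tau \mK x_\rho + \tilde y^{k+1}.
\]
Then $\bar x^{k+1} = x^{k+1} - \tilde x^{k+1}$ gives $\bar x^{k+1} - x_\rho + \sigma \mK^* y_\rho = u + p$, and together with $\hat x^{k+1} = x^{k+1} - \vartheta \tilde x^{k+1}$ and $\sigma' = \vartheta\sigma$ also $\hat x^{k+1} - x_\rho + \sigma' \mK^* y_\rho = u + \vartheta p$; dually, $\bar y^{k+1} - y_\rho - \tau \mK x_\rho = v - q$ and, from $\hat y^{k+1} = y^{k+1} - \vartheta^{-1}\tilde y^{k+1}$ and $\tau' = \vartheta^{-1}\tau$, $\hat y^{k+1} - y_\rho - \tau' \mK x_\rho = v - \vartheta^{-1} q$. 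Moreover the mixed term on the right-hand side of~\eqref{eq:acc_quadratic_terms} is exactly $\tfrac{1-\vartheta}{\vartheta\tau}\scp{v}{q} - \tfrac{1-\vartheta}{\vartheta\sigma}\scp{u}{p}$ and the residual term is $-\tfrac{\sigma\gamma}{2\tau}\norm{q}^2$. With these substitutions the asserted identity decouples into an ``$x$-part'' in the variables $u,p$ and a ``$y$-part'' in the variables $v,q$, which can be checked separately.

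Next I would expand every squared norm via $\norm{a \pm b}^2 = \norm{a}^2 \pm 2\scp{a}{b} + \norm{b}^2$ and compare coefficients. For the $x$-part the left-hand side contributes $\tfrac{1}{2\sigma}\norm{u+p}^2 + \tfrac{\gamma}{2}\norm{u}^2$ and the right-hand side $\tfrac{1}{2\vartheta^2\sigma}\norm{u+\vartheta p}^2 - \tfrac{1-\vartheta}{\vartheta\sigma}\scp{u}{p}$ (using $\sigma' = \vartheta\sigma$); matching the $\norm{u}^2$-coefficients amounts to $\tfrac{1}{2\sigma} + \tfrac{\gamma}{2} = \tfrac{1}{2\vartheta^2\sigma}$, which is precisely the defining relation $\vartheta^{-2} = 1 + \sigma\gamma$ from~\eqref{eq:acc_final_step}, while the $\scp{u}{p}$- and $\norm{p}^2$-coefficients then agree because $\tfrac{1}{\vartheta\sigma} - \tfrac{1-\vartheta}{\vartheta\sigma} = \tfrac{1}{\sigma}$ and $\tfrac{\vartheta^2}{2\vartheta^2\sigma} = \tfrac{1}{2\sigma}$. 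For the $y$-part the left-hand side contributes $\tfrac{1}{2\tau}\norm{v-q}^2$ and the right-hand side $\tfrac{1}{2\tau}\norm{v-\vartheta^{-1}q}^2 + \tfrac{1-\vartheta}{\vartheta\tau}\scp{v}{q} - \tfrac{\sigma\gamma}{2\tau}\norm{q}^2$ (using $\tau' = \vartheta^{-1}\tau$); here the $\norm{v}^2$-coefficients are trivially equal, the $\scp{v}{q}$-coefficients agree since $-\tfrac{1}{\vartheta\tau} + \tfrac{1-\vartheta}{\vartheta\tau} = -\tfrac{1}{\tau}$, and the $\norm{q}^2$-coefficients agree since $\vartheta^{-2} - \sigma\gamma = 1$, again by the definition of $\vartheta$. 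Since both parts balance term by term, summing them yields~\eqref{eq:acc_quadratic_terms}.

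I do not expect a genuine obstacle here; the content is bookkeeping. The only points requiring care are choosing the substitutions $u,v,p,q$ so that the identity splits into a primal and a dual piece, and recognizing that the two seemingly ad hoc ingredients on the right—the factor $\vartheta^{-1}$ rescaling $q$ through $\tau' = \vartheta^{-1}\tau$ and the residual term $-\tfrac{\sigma\gamma}{2\tau}\norm{q}^2$—are exactly calibrated so that the excess $\vartheta^{-2} - 1 = \sigma\gamma$ produced in the $\norm{q}^2$-coefficient is absorbed, and symmetrically that the scaling $\sigma' = \vartheta\sigma$ together with the $\scp{u}{p}$-correction matches the extra $\tfrac{\gamma}{2}\norm{u}^2$ on the left via $\vartheta^{-2} = 1+\sigma\gamma$.
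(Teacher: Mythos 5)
Your proof is correct and follows essentially the same route as the paper's: a direct algebraic verification that splits into an $x$-part and a $y$-part, each expanded by the law of cosines and balanced via the defining relation $\vartheta^{-2} = 1 + \sigma\gamma$. The abbreviations $u,v,p,q$ are a helpful presentational device but do not change the substance of the computation.
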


\begin{proof}
  This is a result of straightforward computations which we
  present for completeness and reader's convenience.
  Computing
  \begin{align*}
    \frac1{2\sigma} \norm{\bar x^{k+1} - x_\rho + \sigma \mK^*y_\rho}^2
    &+ \frac\gamma2
      \norm{x^{k+1} - x_\rho}^2 \\
    &=
      \frac{1 + \sigma \gamma}{2\sigma} \norm{x^{k+1} - x_\rho}^2
      + \frac{1 + \sigma\gamma}\sigma
      \scp{x^{k+1} - x_\rho}{\frac{1}{\sqrt{1 + \sigma\gamma}}
      [\sigma \mK^*y_\rho - \tilde x^{k+1}]} \\
    & \quad
      + \frac{1 + \sigma\gamma}{2\sigma} \norm{\frac{1}{\sqrt{1 + \sigma
      \gamma}} [\sigma \mK^*y_\rho - \tilde x^{k+1}]}^2  \\
    & \quad
      - \frac1{\sigma} (\sqrt{1 + \sigma\gamma} - 1)
      \scp{x^{k+1} - x_\rho}{\sigma \mK^*y_\rho - \tilde x^{k+1}}
    \\
    &= \sqrt{1 + \sigma\gamma} \frac{\sqrt{1 + \sigma\gamma}}{2\sigma}
      \norm{x^{k+1} - x_\rho + \frac1{\sqrt{1 + \sigma\gamma}}
      [\sigma \mK^*y_\rho - \tilde x^{k+1}]}^2 \\
    & \quad
      - \frac1{\sigma} (\sqrt{1 + \sigma\gamma} - 1)
      \scp{x^{k+1} - x_\rho}{\sigma \mK^*y_\rho - \tilde x^{k+1}}
  \end{align*}
  as well as
  \begin{align*}
    \frac1{2\tau} \| \bar y^{k+1} - y_\rho &- \tau \mK x_\rho \|^2
    = \frac{1}{2\tau} \norm{y^{k+1} - y_\rho}^2
      - \frac{1}{\tau} \scp{y^{k+1} - y_\rho}
      {\sqrt{1+\sigma \gamma}[\tau \mK x_\rho + \tilde y^{k+1}]} \\
    & \quad
      + \frac{1}{2\tau}\norm{\sqrt{1 + \sigma\gamma}[\tau \mK x_\rho
      + \tilde y^{k+1}]}^2 + \frac{1}{\tau} (\sqrt{1+\sigma \gamma} - 1)
      \scp{y^{k+1} - y_\rho}{\tau \mK x_\rho + \tilde y^{k+1}}
    \\
    & \quad - \frac{\sigma \gamma}{2\tau}  \norm{\tau \mK x_\rho + \tilde y^{k+1}}^2 \\
    & = \frac{\sqrt{1 + \sigma\gamma}}{2\tau \sqrt{1 + \sigma\gamma}}
      \norm{y^{k+1}
      - y_\rho - \sqrt{1+\sigma\gamma} [\tau \mK x_\rho + \tilde y^{k+1}]}^2 \\
    & \quad
      + \frac{1}{\tau} (\sqrt{1+\sigma \gamma} - 1)
      \scp{y^{k+1} - y_\rho}{\tau \mK x_\rho + \tilde y^{k+1}}
      - \frac{\sigma \gamma}{2\tau}  \norm{\tau \mK x_\rho + \tilde y^{k+1}}^2
  \end{align*}
  and plugging in the definitions~\eqref{eq:acc_final_step}
  already gives~\eqref{eq:acc_quadratic_terms}.
\end{proof}

As the next step, we aim at combining the ``error terms'' on the
right-hand side of~\eqref{eq:acc_quadratic_terms} and left-hand side
of~\eqref{eq:acc_lagrangian_est} such that they become non-negative.
One important point here is to choose $\vartheta_p$ and $\vartheta_d$
such that possibly negative terms involving $y^{k+1} - y_\rho$ cancel out.

\begin{lemma}
  \label{lem:acc_error_term_est}
  In the situation of Lemma~\ref{lem:acc_sqr_norm_repr},
  with $\vartheta_p = \vartheta^{-1}$ and $\vartheta_d = \vartheta$,
  we have
  \begin{align}
    \label{eq:acc_error_terms}
    \notag
    (1-\vartheta_p)
    &\scp{\hat y^k - \bar y^{k+1}}
      {\mK[x^{k+1} + \bar x^{k+1} - \hat x^k - x_\rho]}
                      - (1 - \vartheta_d)
      \scp{y^{k+1} + \bar y^{k+1} - \hat y^k - y_\rho}
          {\mK[\hat x^k - \bar x^{k+1}]} \\
    \notag
    &
      + \frac{1 - \vartheta}{\vartheta}
      \Bigl[
      \frac{1}{\tau} \scp{y^{k+1} - y_\rho}{\tau \mK x_\rho + \tilde y^{k+1}}
      - \frac1\sigma \scp{x^{k+1} - x_\rho}{\sigma \mK^*y_\rho - \tilde x^{k+1}}
      \Bigr]
    \\
    &
      \notag
      - \frac{\sigma \gamma}{2\tau}  \norm{\tau \mK x_\rho + \tilde y^{k+1}}^2 \\
    &\geq
      - \frac{c}{2\sigma} \norm{\hat x^k - \bar x^{k+1}}^2
      - \frac{c}{2\tau} \norm{\hat y^k - \bar y^{k+1}}^2
      - \frac{c(1+\sigma \gamma)\sigma \gamma\tau \|\mK\|^2}
      {2(c^2 - \sigma^2 \gamma \tau \|\mK\|^2)} \|x^{k+1} - x_\rho\|^2
  \end{align}
  whenever $\sigma^2\gamma\tau\norm{\mK}^2 < c^2 < 1$ holds
  for some $c > 0$.
\end{lemma}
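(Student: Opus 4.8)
The plan is to carry out a completely explicit reduction of the left-hand side of~\eqref{eq:acc_error_terms}, exploiting the choices $\vartheta_p = \vartheta^{-1}$, $\vartheta_d = \vartheta$ and the update rule~\eqref{update:x:y:k+1}, until only an elementary scalar inequality remains, and then to finish with two applications of Young's inequality. The first step is to set $v := \hat y^k - \bar y^{k+1}$, to rewrite $x^{k+1} + \bar x^{k+1} - \hat x^k - x_\rho = (x^{k+1}-x_\rho) - (\hat x^k - \bar x^{k+1})$ (and the analogous dual identities), and to use~\eqref{update:x:y:k+1} with the present parameter values in the form
\[
\sigma\mK^* y_\rho - \tilde x^{k+1} = -\sigma\mK^*(y^{k+1}-y_\rho) + \vartheta^{-1}\sigma\mK^* v, \qquad
\tau\mK x_\rho + \tilde y^{k+1} = -\tau\mK(x^{k+1}-x_\rho) + \vartheta\tau\mK(\hat x^k - \bar x^{k+1}).
\]

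Substituting these into~\eqref{eq:acc_error_terms}, the symmetric inner products $\scp{x^{k+1}-x_\rho}{\mK^*(y^{k+1}-y_\rho)}$ and $\scp{y^{k+1}-y_\rho}{\mK(x^{k+1}-x_\rho)}$ cancel against each other, and --- this is precisely what the choice $\vartheta_p = \vartheta^{-1}$, $\vartheta_d = \vartheta$ is designed to achieve --- the two remaining terms proportional to $\scp{y^{k+1}-y_\rho}{\mK(\hat x^k - \bar x^{k+1})}$ also cancel. Since $\vartheta = (1+\sigma\gamma)^{-1/2}$, so that $(1-\vartheta_p)(1+\vartheta_p) = \vartheta^{-2} - 1 = \sigma\gamma$ and $(1+\vartheta)(\vartheta_p-1) = \vartheta^{-1}-\vartheta = \sigma\gamma\vartheta$, the surviving coefficients collapse and, with $w := \mK(x^{k+1}-x_\rho) - \vartheta\,\mK(\hat x^k - \bar x^{k+1})$, the left-hand side of~\eqref{eq:acc_error_terms} turns out to equal $\sigma\gamma\bigl[-\scp{v}{w} - \tfrac\tau2\norm{w}^2\bigr]$.

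It then remains to bound this scalar quantity from below. I would use Young's inequality twice: first $-\scp{v}{w} \ge -\tfrac\delta2\norm{v}^2 - \tfrac1{2\delta}\norm{w}^2$ with $\delta = c/(\sigma\gamma\tau)$, so that the $\norm{\hat y^k - \bar y^{k+1}}^2$-term receives exactly the coefficient $-c/(2\tau)$; second $\norm{w}^2 \le (1+\xi)\norm{\mK}^2\norm{x^{k+1}-x_\rho}^2 + (1+\xi^{-1})\vartheta^2\norm{\mK}^2\norm{\hat x^k - \bar x^{k+1}}^2$, with $\xi > 0$ chosen so that the $\norm{x^{k+1}-x_\rho}^2$-coefficient comes out exactly as $-c(1+\sigma\gamma)\sigma\gamma\tau\norm{\mK}^2/(2(c^2 - \sigma^2\gamma\tau\norm{\mK}^2))$. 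This forces $1+\xi = c^2(1+\sigma\gamma)\bigl/\bigl((c+\sigma\gamma)(c^2-\sigma^2\gamma\tau\norm{\mK}^2)\bigr)$, which is admissible ($>1$) exactly because $\sigma^2\gamma\tau\norm{\mK}^2 < c^2$; and a short computation --- which rewrites $1+\xi^{-1}$ as $c^2(1+\sigma\gamma)\bigl/\bigl(c^2(1-c) + \sigma^2\gamma\tau\norm{\mK}^2(c+\sigma\gamma)\bigr)$ --- shows that the residual coefficient of $\norm{\hat x^k - \bar x^{k+1}}^2$ is then $\le -c/(2\sigma)$, the step $c^2(1-c) \ge 0$ being the only place $c < 1$ enters. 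Collecting the three contributions yields~\eqref{eq:acc_error_terms}.

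The main obstacle is the first, algebraic part: one must organize the expansion so that the symmetric and the ``mixed'' coupling terms visibly vanish, and confirm that the remaining coefficients really do simplify to $\sigma\gamma$ and $\sigma\gamma\vartheta$ by way of $\vartheta^2 = 1/(1+\sigma\gamma)$. Once the left-hand side has been brought into the compact form $\sigma\gamma\bigl(-\scp{v}{w} - \tfrac\tau2\norm{w}^2\bigr)$, the remainder is a careful but routine calibration of the two Young parameters, in which the hypothesis $\sigma^2\gamma\tau\norm{\mK}^2 < c^2 < 1$ is seen to be exactly the condition making all three target coefficients simultaneously attainable.
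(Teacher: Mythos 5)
Your proof is correct and follows essentially the same approach as the paper: the same cancellation (driven by $\vartheta_p\vartheta_d=1$ and $\vartheta_p^2-1=\sigma\gamma$) yields the compact form $\sigma\gamma\bigl(-\scp{v}{w}-\tfrac{\tau}{2}\norm{w}^2\bigr)$, which the paper reaches implicitly as $(1-\vartheta_p^2)\scp{\hat y^k-\bar y^{k+1}}{\mK[(x^{k+1}-x_\rho)+\vartheta_d(\bar x^{k+1}-\hat x^k)]}$ together with the extra quadratic term, and the same two Young's inequalities close the estimate. The only difference is which coefficient you calibrate to equality: you fix the $\norm{x^{k+1}-x_\rho}^2$-coefficient exactly and invoke $c<1$ at the very end, whereas the paper uses $c<1$ early (to enlarge $\tfrac{(\vartheta_p^2-1)^2}{2c}+\tfrac{\vartheta_p^2-1}{2}$ to $\tfrac{\vartheta_p^4-\vartheta_p^2}{2c}$) and then fixes the $\norm{\hat x^k-\bar x^{k+1}}^2$-coefficient exactly --- the final bounds are identical.
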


\begin{proof}
  We start with rewriting the scalar-product terms from
  the right-hand side of~\eqref{eq:acc_quadratic_terms} by plugging
  in~\eqref{update:x:y:k+1} as follows
      \begin{align*}
    \frac1\tau \langle y^{k+1} - y_\rho,
    &\tau \mK x_\rho + \tilde y^{k+1}\rangle
      - \frac1\sigma \scp{x^{k+1} - x_\rho}{\sigma \mK^*y_\rho - \tilde x^{k+1}} \\
    &= \scp{y^{k+1} - y_\rho}{\mK[x_\rho - x^{k+1}
      + \bar x^{k+1} - \hat x^k]} - \scp{y^{k+1} - y_\rho + \bar y^{k+1}
      - \hat y^k}{\mK[x_\rho - x^{k+1}]} \\
    & \quad - (1 + \vartheta_d)\scp{y^{k+1} - y_\rho}{\mK[\bar x^{k+1} - \hat x^k]}
      + (1 - \vartheta_p) \scp{\bar y^{k+1} - \hat y^k}{\mK[x_\rho - x^{k+1}]}
    \\
    &= \scp{\hat y^k - \bar y^{k+1}}{\mK[x_\rho - x^{k+1} + \bar x^{k+1} - \hat x^k]}
      - \scp{y^{k+1} - y_\rho + \bar y^{k+1} - \hat y^k}{\mK[\hat x^k - \bar x^{k+1}]}
    \\
    & \quad + (1 + \vartheta_d)\scp{y^{k+1} - y_\rho}{\mK[\hat x^k - \bar x^{k+1}]}
      + (1 - \vartheta_p) \scp{\bar y^{k+1} - \hat y^k}{\mK[x_\rho - x^{k+1}]}
    \\
    &= \scp{\hat y^k - \bar y^{k+1}}{\mK[\vartheta_p(x_\rho - x^{k+1}) + \bar x^{k+1} - \hat x^k]}
    \\
    & \quad
      - \scp{-\vartheta_d(y^{k+1} - y_\rho) + \bar y^{k+1} - \hat y^k}{\mK[\hat x^k - \bar x^{k+1}]}.
  \end{align*}
  Incorporating the scalar-product terms from the left-hand side
  of~\eqref{eq:acc_lagrangian_est}, the
  choice of $\vartheta_p$ and $\vartheta_d$ yields
      \begin{align*}
  (1 - \vartheta_p)
  &\scp{\hat y^k - \bar y^{k+1}}
  {\mK[x^{k+1} + \bar x^{k+1} - \hat x^k - x_\rho]} -(1 - \vartheta_d)
  \scp{y^{k+1} + \bar y^{k+1} - \hat y^k - y_\rho}
  {\mK[\hat x^k - \bar x^{k+1}]} \\
  & \quad - \frac{1 - \vartheta}{\vartheta}
    \scp{-\vartheta_d(y^{k+1} - y_\rho) + \bar y^{k+1} - \hat y^k}
    {\mK[\hat x^k - \bar x^{k+1}]} \\
  & \quad + \frac{1 - \vartheta}{\vartheta}
    \scp{\hat y^k - \bar y^{k+1}}{\mK[\vartheta_p(x_\rho - x^{k+1})
    + \bar x^{k+1} - \hat x^k]} \\
  &=
    \scp{\hat y^k - \bar y^{k+1}}{\mK[(1 - \vartheta_p^2)(x^{k+1}- x_\rho)
    + (\vartheta_d - \vartheta_p)(\bar x^{k+1} - \hat x^k)]} \\
  &= (1 - \vartheta_p^2) \scp{\hat y^k - \bar y^{k+1}}
    {\mK[(x^{k+1} - x_\rho) + \vartheta_d(\bar x^{k+1} - \hat x^{k})]}.
\end{align*}
Using Young's inequality, this allows to estimate, as $c > 0$,
\begin{align*}
(1 - \vartheta_p^2) &\scp{\hat y^k - \bar y^{k+1}}
    {\mK[(x^{k+1} - x_\rho) + \vartheta_d(\bar x^{k+1} - \hat x^{k})]} \\
&\geq
- \frac{c}{2\tau} \norm{\hat y^k - \bar y^{k+1}}^2 -
\frac{(1-\vartheta_p^2)^2\tau}{2c} \norm{\mK[(x^{k+1} - x_\rho)
  + \vartheta_d(\bar x^{k+1} - \hat x^k)]}^2.
\end{align*}
Taking the missing
quadratic term into account,
using that $c < 1$ and
once again the definitions of $\vartheta$, $\vartheta_p$ and
$\vartheta_d$, gives, for $\varepsilon > 0$, the estimate
\begin{align*}
  \frac{(1-\vartheta_p^2)^2\tau}{2c} &\norm{\mK[(x^{k+1} - x_\rho) + \vartheta_d(\bar x^{k+1} - \hat x^k)]}^2
    + \frac{\sigma\gamma}{2\tau} \norm{\tau \mK x_\rho + \tilde y^{k+1}}^2  \\
  &=  \Bigl(\frac{(\vartheta_p^2-1)^2}{2c} + \frac{\vartheta_p^2-1}{2}\Bigr)\tau\norm{\mK[(x^{k+1} - x_\rho) + \vartheta_d(\bar x^{k+1} - \hat x^k)]}^2
  \\
  & \leq \frac{\vartheta_p^4-\vartheta_p^2}{2c}\tau\norm{\mK[(x^{k+1} - x_\rho) + \vartheta_d(\bar x^{k+1} - \hat x^k)]}^2\\
  & \leq (1 + \varepsilon)\frac{\vartheta_p^2 - 1}{2c} \tau\|\mK\|^2\|\bar x^{k+1} - \hat x^k\|^2
  + \Bigl(1 + \frac{1}{\varepsilon}\Bigr)\frac{\vartheta_p^4- \vartheta_p^2}{2c} \tau \|\mK\|^2 \|x^{k+1}-x_\rho\|^2.
\end{align*}
We would like to set
\[
 (1 + \varepsilon)\frac{\vartheta_p^2 - 1}{2c} \tau\|\mK\|^2 = \frac{c}{2 \sigma},
\]
which is equivalent to
\[
\varepsilon
= \frac{c^2 - \sigma^2\gamma\tau \norm{\mK}^2}{\sigma^2\gamma\tau \norm{\mK}^2}
\]
and leading to $\varepsilon > 0$ since
$\sigma^2\gamma\tau \norm{\mK}^2 < c^2$ by assumption. Plugged into
the last term in the above estimate, we get
\begin{align*}
  \Bigl(1 + \frac{1}{\varepsilon}\Bigr)\frac{\vartheta_p^4- \vartheta_p^2}{2c} \tau \|\mK\|^2 \|x^{k+1}-x_\rho\|^2
  &= \frac{c^2}{c^2 - \sigma^2\gamma\tau\|\mK\|^2} \frac{\vartheta_p^2-1}{2c}\vartheta_p^2 \tau \|\mK\|^2\|x^{k+1}-x_\rho \|^2 \\
  & = \frac{c(1+\sigma \gamma)\sigma \gamma \tau \|\mK\|^2}{2(c^2 - \sigma^2 \gamma \tau \|\mK\|^2)} \|x^{k+1} - x_\rho \|^2.
\end{align*}
Putting all estimates together then yields~\eqref{eq:acc_error_terms}.
\end{proof}
In view of combining~\eqref{eq:acc_lagrangian_est},
\eqref{eq:acc_quadratic_terms} and~\eqref{eq:acc_error_terms}, the factor
in front of $\norm{x^{k+1} - x_\rho}^2$ in~\eqref{eq:acc_error_terms} should not
be too large. Choosing $\gamma > 0$ small enough, one can indeed control
this quantity. Doing so, one arrives at the following result.

\begin{proposition}
  \label{prop:acc1_lagrangian_est}
  With the definitions in~\eqref{eq:acc_final_step} and $\gamma$ chosen such
  that
  \begin{equation}
    \label{eq:acc_gamma_restr}
    0 < \gamma < \frac{2\gamma_1}{1 + \rho
      + (1 + \rho + 2\sigma \gamma_1)\sigma\tau \norm{\mK}^2}
  \end{equation}
  there is a $0 < c < 1$ and a $c' > 0$ such that
  the following estimate holds:
  \begin{align}
    \label{eq:acc1_lagrangian_est}
    \notag
    \rho \bigl( \mL(x^{k+1},y) - \mL(x,y^{k+1}) \bigr)
    &
      + \frac1\vartheta \Bigl[
      \frac1{2\sigma'} \norm{\hat x^{k+1} - x_\rho + \sigma' \mK^*y_\rho}^2
      + \frac1{2\tau'} \norm{\hat y^{k+1} - y_\rho - \tau' \mK x_\rho}^2
      \Bigr]
    \\
    \notag
    & \quad + (1-c) \Bigl[
      \frac1{2\sigma} \norm{\hat x^k - \bar x^{k+1}}^2 +
      \frac1{2\tau} \norm{\hat y^k - \bar y^{k+1}}^2 \Bigr]
      + \frac{c'}{2} \norm{x^{k+1} - x_\rho}^2
    \\
    & \leq \frac1{2\sigma}
      \norm{\hat x^k - x_\rho + \sigma \mK^*y_\rho}^2 + \frac1{2\tau}
      \norm{\hat y^k - y_\rho - \tau \mK x_\rho}^2.
  \end{align}
\end{proposition}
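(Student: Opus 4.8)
The plan is to combine the three estimates established above: the Lagrangian estimate \eqref{eq:acc_lagrangian_est} from Lemma~\ref{lem:acc_lagrangian_est} (specialized to $\gamma_2 = 0$), the quadratic-term reformulation \eqref{eq:acc_quadratic_terms} from Lemma~\ref{lem:acc_sqr_norm_repr}, and the error-term estimate \eqref{eq:acc_error_terms} from Lemma~\ref{lem:acc_error_term_est}. First I would observe that the left-hand side of \eqref{eq:acc_lagrangian_est} contains the quadratic block $\frac1{2\sigma}\norm{\bar x^{k+1} - x_\rho + \sigma\mK^*y_\rho}^2 + \frac{\gamma_1}{1+\rho}\norm{x^{k+1}-x_\rho}^2 + \frac1{2\tau}\norm{\bar y^{k+1} - y_\rho - \tau\mK x_\rho}^2$, which matches the left-hand side of \eqref{eq:acc_quadratic_terms} exactly upon setting $\gamma = \frac{2\gamma_1}{1+\rho}$. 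Substituting \eqref{eq:acc_quadratic_terms} for this block converts it into the telescoping term $\frac1\vartheta[\frac1{2\sigma'}\norm{\hat x^{k+1} - x_\rho + \sigma'\mK^*y_\rho}^2 + \frac1{2\tau'}\norm{\hat y^{k+1} - y_\rho - \tau'\mK x_\rho}^2]$, plus the scalar-product ``error terms'' and the correction $-\frac{\sigma\gamma}{2\tau}\norm{\tau\mK x_\rho + \tilde y^{k+1}}^2$, which are precisely the quantities bounded from below in \eqref{eq:acc_error_terms}.

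Next I would apply Lemma~\ref{lem:acc_error_term_est} to those error terms. This requires choosing $c$ with $\sigma^2\gamma\tau\norm{\mK}^2 < c^2 < 1$; since $\gamma = \frac{2\gamma_1}{1+\rho}$, the condition $\sigma^2\gamma\tau\norm{\mK}^2 < 1$ is what the restriction \eqref{eq:acc_gamma_restr} on $\gamma$ guarantees (after noting that \eqref{eq:acc_gamma_restr} bounds the admissible $\gamma$, hence via $\gamma_1 = \frac{(1+\rho)\gamma}{2}$ also bounds $\gamma_1$; one must check the algebra here translates correctly). With such a $c$ fixed, \eqref{eq:acc_error_terms} produces the lower bound $-\frac{c}{2\sigma}\norm{\hat x^k - \bar x^{k+1}}^2 - \frac{c}{2\tau}\norm{\hat y^k - \bar y^{k+1}}^2 - \frac{c(1+\sigma\gamma)\sigma\gamma\tau\norm{\mK}^2}{2(c^2 - \sigma^2\gamma\tau\norm{\mK}^2)}\norm{x^{k+1} - x_\rho}^2$. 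The first two negative pieces combine with the $\frac1{2\sigma}\norm{\hat x^k - \bar x^{k+1}}^2 + \frac1{2\tau}\norm{\hat y^k - \bar y^{k+1}}^2$ already present on the left of \eqref{eq:acc_lagrangian_est}, leaving the coefficient $(1-c) > 0$ as claimed. The last negative piece competes against the term $\frac{\gamma_1}{1+\rho}\norm{x^{k+1}-x_\rho}^2 = \frac{\gamma}{2}\norm{x^{k+1}-x_\rho}^2$; after subtracting, the net coefficient of $\norm{x^{k+1}-x_\rho}^2$ is $c' = \gamma - \frac{c(1+\sigma\gamma)\sigma\gamma\tau\norm{\mK}^2}{c^2 - \sigma^2\gamma\tau\norm{\mK}^2}$, and I would verify that \eqref{eq:acc_gamma_restr} (with $\gamma$ in place of $\frac{2\gamma_1}{1+\rho}$, up to the relation above) forces this to be strictly positive for an appropriate choice of $c$ close to $1$.

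The main obstacle I anticipate is precisely this last bookkeeping step: showing that the upper bound \eqref{eq:acc_gamma_restr} on $\gamma$ simultaneously (i) makes $\sigma^2\gamma\tau\norm{\mK}^2$ small enough that a valid $c\in(\sigma\sqrt{\gamma\tau}\norm{\mK}, 1)$ exists, and (ii) makes $c' > 0$ for such a $c$. One needs to pick $c$ as a function of the other data --- a natural choice is $c^2$ somewhere between $\sigma^2\gamma\tau\norm{\mK}^2$ and $1$, perhaps $c = \sqrt{\sigma\tau}\norm{\mK}\cdot(\text{something})$ or simply $c\to 1^-$ --- and then the positivity of $c'$ reduces to an inequality equivalent to \eqref{eq:acc_gamma_restr}; I would expand $(1-c^2)$ in the denominator, clear denominators, and match terms. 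Everything else is direct substitution and collecting like terms. Finally, with $c$ and $c'$ so chosen, assembling \eqref{eq:acc_lagrangian_est}, \eqref{eq:acc_quadratic_terms} and \eqref{eq:acc_error_terms} yields \eqref{eq:acc1_lagrangian_est} after discarding the (now manifestly non-negative) leftover terms, completing the proof.
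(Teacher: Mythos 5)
Your high-level plan---combine \eqref{eq:acc_lagrangian_est}, \eqref{eq:acc_quadratic_terms} and \eqref{eq:acc_error_terms}, then pick $c$ close to $1$---matches the paper's, but there is a genuine gap in how you treat $\gamma$. You propose to set $\gamma = \frac{2\gamma_1}{1+\rho}$ so that the quadratic block in \eqref{eq:acc_lagrangian_est} matches the left-hand side of \eqref{eq:acc_quadratic_terms} exactly. This fails twice over. First, the choice contradicts the hypothesis \eqref{eq:acc_gamma_restr}: since the denominator there is strictly larger than $1+\rho$ (because $\sigma\tau\norm{\mK}^2 > 0$), any admissible $\gamma$ satisfies $\gamma < \frac{2\gamma_1}{1+\rho}$ strictly. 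Second, and more fundamentally, the substitution via \eqref{eq:acc_quadratic_terms} \emph{consumes} the full $\frac{\gamma}{2}\norm{x^{k+1}-x_\rho}^2$; with your choice that equals the entire $\frac{\gamma_1}{1+\rho}\norm{x^{k+1}-x_\rho}^2$ available (since $\gamma_2 = 0$). Nothing then remains to absorb the strictly negative contribution $-\frac{c(1+\sigma\gamma)\sigma\gamma\tau\norm{\mK}^2}{2(c^2-\sigma^2\gamma\tau\norm{\mK}^2)}\norm{x^{k+1}-x_\rho}^2$ from Lemma~\ref{lem:acc_error_term_est}, so your proposed ``$c' = \gamma - \dots$'' has no positive budget to draw on and the argument cannot close. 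Also note that $\gamma_1$ is the (given) modulus of strong convexity; you cannot define it backwards as $\gamma_1 = \frac{(1+\rho)\gamma}{2}$.

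The role of the strict inequality in \eqref{eq:acc_gamma_restr} is precisely to keep a positive surplus $\bigl(\frac{\gamma_1}{1+\rho} - \frac{\gamma}{2}\bigr)\norm{x^{k+1}-x_\rho}^2$ after using \eqref{eq:acc_quadratic_terms} with the free parameter $\gamma$ itself (not $\frac{2\gamma_1}{1+\rho}$). A direct algebraic manipulation shows that \eqref{eq:acc_gamma_restr} is equivalent to the $c=1$ inequality
\[
\frac{(1+\sigma\gamma)\sigma\gamma\tau\norm{\mK}^2}{2(1-\sigma^2\gamma\tau\norm{\mK}^2)} < \frac{\gamma_1}{1+\rho} - \frac{\gamma}{2},
\]
and in particular forces $\sigma^2\gamma\tau\norm{\mK}^2 < 1$. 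Continuity of $c \mapsto \frac{c(1+\sigma\gamma)\sigma\gamma\tau\norm{\mK}^2}{2(c^2-\sigma^2\gamma\tau\norm{\mK}^2)}$ at $c=1$ then yields $c < 1$ with $c^2 > \sigma^2\gamma\tau\norm{\mK}^2$ and a strictly positive leftover coefficient, which becomes $c'/2$. Once you carry this surplus correctly, the rest of your assembly of the three lemmas goes through.
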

\begin{proof}
  Note that~\eqref{eq:acc_gamma_restr} implies that
  $\sigma^2\gamma\tau \norm{\mK}^2 < 1$
  and
  \[
  \frac{(1+\sigma \gamma) \sigma \gamma \tau \|\mK\|^2}{2(1 - \sigma^2 \gamma \tau \|\mK\|^2)} < \frac{\gamma_1}{1 + \rho} - \frac{\gamma}{2},
  \]
  so by continuity of $c \mapsto c(1+\sigma\gamma)\sigma\gamma \tau
  \norm{\mK}^2/\bigl(2(c^2 - \sigma^2\gamma\tau \norm{\mK}^2) \bigr)$
  in $c =1$, one can find a $c < 1$ with
  $c^2 > \sigma^2\gamma\tau \norm{\mK}^2$ such that
  \[
  c' = \frac{\gamma_1}{1 + \rho} - \frac{\gamma}{2} - \frac{c(1+\sigma \gamma)\sigma \gamma\tau \|\mK\|^2}{2(c^2 - \sigma^2 \gamma \tau \|\mK\|^2)} > 0.
  \]
  The prerequisites for Lemma~\ref{lem:acc_error_term_est} are satisfied,
  hence one can combine~\eqref{eq:acc_lagrangian_est},
  \eqref{eq:acc_quadratic_terms} and~\eqref{eq:acc_error_terms} in order
  to get~\eqref{eq:acc1_lagrangian_est}.
\end{proof}

\begin{remark}
  From the proof it is also immediate
  that if~\eqref{eq:acc_gamma_restr} holds for a
  $\sigma_0 > 0$ instead of $\sigma$, the
  estimate~\eqref{eq:acc1_lagrangian_est} will still hold for
  all $0 < \sigma \leq \sigma_0$ and $\tau > 0$ such that
  $\sigma\tau = \sigma_0\tau_0$
  with $c$ and $c'$ independent from
  $\sigma$.
\end{remark}

The estimate~\eqref{eq:acc1_lagrangian_est} suggests to adapt the
step-sizes $(\sigma,\tau) \to (\sigma',\tau')$ in each iteration step.
This yields the \emph{accelerated Douglas--Rachford iteration} which obeys
the following recursion:
\begin{equation}
  \label{eq:accelerated-douglas-rachford:notable}
  \left\{
    \begin{aligned}
      \vartheta_k &= \frac1{\sqrt{1 + \sigma_k\gamma}}, \\
      x^{k+1} &= (\id + \sigma_k\subgrad \mF)^{-1}(\hat x^k), \\
      y^{k+1} &= (\id + \tau_k\subgrad \mG)^{-1}(\hat y^k), \\
      \tilde x^{k+1} &= \sigma_k \mK^*[y^{k+1} + \vartheta_k^{-1}(\bar y^{k+1} - \hat y^k)], \\
      \tilde y^{k+1} &= - \tau_k \mK[x^{k+1} + \vartheta_k(\bar x^{k+1} - \hat x^k)], \\
      \hat x^{k+1} &= x^{k+1} - \vartheta_k \tilde x^{k+1} , \\
      \hat y^{k+1} &= y^{k+1} - \vartheta_k^{-1} \tilde y^{k+1}, \\
      \sigma_{k+1} &= \vartheta_k\sigma_k,
      \quad \tau_{k+1} = \vartheta_k^{-1}\tau_k.
    \end{aligned}
  \right.
\end{equation}
As before, the iteration can be written down explicitly and in a
simplified manner, as, for instance, the product satisfies
$\sigma_k\tau_k = \sigma_0\tau_0$ and some auxiliary variables
can be omitted. Such a version is summarized in
Table~\ref{tab:accelerated-douglas-rachford} along with conditions we
will need in the following convergence analysis.
One sees in particular that~\eqref{eq:accelerated-douglas-rachford_}
requires only negligibly more computational and implementational
effort than~\eqref{eq:douglas-rachford_}.

\begin{table}
  \centering
  \begin{tabular}{p{0.15\linewidth}p{0.75\linewidth}}
    \toprule

    \multicolumn{2}{l}{\textbf{aDR}\ \ \ \ \textbf{Objective:}
    \hfill    Solve  \ \ $\min_{x\in \dom \mF} \max_{y \in  \dom \mG}
    \ \langle \mK x,y\rangle + \mF(x) - \mG(y)$
    \hfill\mbox{} }
    \\
    \midrule

    Prerequisites:
 &
   $\mF$ is strongly convex with modulus $\gamma_1 > 0$
    \\[\smallskipamount]
    Initialization:
 &
   $(\hat x^0, \hat y^0) \in X \times Y$
   initial guess, $\sigma_0 > 0, \tau_0 > 0$ initial step sizes, \\
 &
   $0 < \gamma < \frac{2\gamma_1}{1 + \sigma_0\tau_0 \norm{\mK}^2}$
   acceleration factor, $\vartheta_0 = \frac1{\sqrt{1 + \sigma_0\gamma}}$
    \\[\medskipamount]
    Iteration:
 &
   \raisebox{1.6em}[0mm][9\baselineskip]{\begin{minipage}[t]{1.0\linewidth}
       {       \renewcommand{\theHequation}{adr}       \begin{equation}\label{eq:accelerated-douglas-rachford_}\tag{aDR}
         \left\{
           \begin{aligned}
             x^{k+1} &= (\id + \sigma_k\subgrad \mF)^{-1}(\hat x^k) \\
             y^{k+1} &= (\id + \tau_k\subgrad \mG)^{-1}(\hat y^k) \\
             b^{k+1} &= ((1 + \vartheta_k) x^{k+1} - \vartheta_k\hat x^k)
             - \sigma_k\mK^*((1 + \vartheta_k)y^{k+1} - \hat y^k) \\
             d^{k+1} &= (\id + \sigma_0\tau_0 \mK^*\mK)^{-1}b^{k+1} \\
             \hat x^{k+1} &= \vartheta_k(\hat x^k - x^{k+1}) + d^{k+1} \\
             \hat y^{k+1} &= y^{k+1} + \vartheta_k^{-1} \tau_k \mK d^{k+1} \\
             \sigma_{k+1} &= \vartheta_k\sigma_k,
             \quad \tau_{k+1} = \vartheta_k^{-1}\tau_k,
             \quad \vartheta_{k+1} = \tfrac1{\sqrt{1 + \sigma_{k+1}\gamma}} \\
           \end{aligned}
         \right.
       \end{equation}}
     \end{minipage}}
    \\
    \vspace*{-\smallskipamount}
    Output:
 &
   \vspace*{-\smallskipamount}
   $\seq{(x^k,y^k)}$  primal-dual sequence  \\
    \bottomrule
  \end{tabular}
  \caption{The accelerated Douglas--Rachford iteration for the solution of
    convex-concave saddle-point problems of
    type~\eqref{eq:saddle-point-prob}.}
  \label{tab:accelerated-douglas-rachford}
\end{table}

\begin{remark}
  Of course, the iteration~\eqref{eq:accelerated-douglas-rachford_} can
  easily be adapted to involve $(\id +\sigma_0\tau_0\mK\mK^*)^{-1}$
  instead of $(\id +\sigma_0\tau_0\mK^*\mK)^{-1}$, in case the former
  can be computed more efficiently, for instance. A straightforward
  application of Woodbury's formula, however, would introduce an
  additional evaluation of $\mK$ and $\mK^*$, respectively, and hence,
  potentially higher computational effort.
  As in this situation, the issue cannot be resolved by simply interchanging
  primal and dual variable, we explicitly state the necessary
  modifications in order to maintain one evaluation of $\mK$ and $\mK^*$
  in each iteration step:
  \[
  \left\{
    \begin{aligned}
      b^{k+1} &= \bigl((1 + \vartheta_k^{-1})y^{k+1} - \vartheta_k^{-1} \hat y^k \bigr)
      + \tau_k \mK \bigl( (1 + \vartheta_k^{-1})x^{k+1} - \hat x^k \bigr),
      \quad
      d^{k+1} = (\id + \sigma_0\tau_0 \mK\mK^*)^{-1} b^{k+1} \\
      \hat x^{k+1} &= x^{k+1} - \vartheta_k \sigma_k \mK^* d^{k+1},
      \quad
      \hat y^{k+1} = \vartheta_k^{-1}(\hat y^k - y^{k+1}) + d^{k+1}.
    \end{aligned}
  \right.
  \]
\end{remark}

Now, as $\mF$ is strongly convex, we might hope for improved convergence
properties for $\seq{x^k}$ compared to~\eqref{eq:douglas-rachford_}. This
is indeed the case.
For the convergence analysis, we introduce the following quantities.
\begin{equation}
  \label{eq:acc1_erg_weights}
  \lambda_k = \prod_{k'=0}^{k-1} \frac{1}{\vartheta_{k'}},
  \qquad
  \nu_k = \Bigl[\sum_{k'=0}^{k-1} \lambda_{k'}\Bigr]^{-1}
\end{equation}
\begin{lemma}
  \label{lem:acc1_num_seq}
  The sequences $\seq{\lambda_k}$ and $\seq{\nu_k}$ obey, for all
  $k \geq 1$,
  \begin{equation}
    \label{eq:acc1_num_seq_est}
    1 + \frac{k\sigma_0\gamma}{\sqrt{1 + \sigma_0\gamma} + 1}
    \leq \lambda_k \leq 1 + \frac{k\sigma_0\gamma}{2},
    \qquad
    \nu_k \leq \Bigl[ k + \frac{(k-1)k
      \sigma_0\gamma}{2(\sqrt{1 + \sigma_0\gamma} + 1)} \Bigr]^{-1}
    = \mO(1/k^2).
  \end{equation}
\end{lemma}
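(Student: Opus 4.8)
The plan is to establish the recursion governing $\seq{\vartheta_k}$ and $\seq{\sigma_k}$, then derive telescoping-type bounds for $\lambda_k = \prod_{k'=0}^{k-1} \vartheta_{k'}^{-1}$, and finally sum these to control $\nu_k$. First I would record that the step-size update $\sigma_{k+1} = \vartheta_k \sigma_k$ together with $\vartheta_k = (1+\sigma_k\gamma)^{-1/2}$ implies the product $\sigma_k\tau_k = \sigma_0\tau_0$ is preserved (already noted in the text) and, more importantly, that $\seq{\sigma_k}$ is non-increasing, so $\sigma_k \leq \sigma_0$ for all $k$. Consequently $\vartheta_k = (1+\sigma_k\gamma)^{-1/2} \geq (1+\sigma_0\gamma)^{-1/2} = \vartheta_0$, which will give the lower growth bound on $\lambda_k$, while the upper bound will come from a first-order estimate on $\vartheta_k^{-1} = \sqrt{1+\sigma_k\gamma}$.

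Next I would estimate a single factor $\vartheta_k^{-1} = \sqrt{1+\sigma_k\gamma}$. For the upper bound I would use the elementary inequality $\sqrt{1+t} \leq 1 + t/2$ for $t \geq 0$, giving $\vartheta_k^{-1} \leq 1 + \sigma_k\gamma/2 \leq 1 + \sigma_0\gamma/2$ since $\sigma_k \leq \sigma_0$. Telescoping the product of $k$ such factors naively would only give $\lambda_k \leq (1+\sigma_0\gamma/2)^k$, which is exponential and too weak; so instead I would prove the additive bound $\lambda_{k+1} - \lambda_k \leq \lambda_k \sigma_0\gamma/2 \cdot \vartheta_k^{-1}\lambda_k^{-1}\cdots$ — more cleanly, I would observe $\lambda_{k+1} = \lambda_k \vartheta_k^{-1}$ and that $\sigma_k = \sigma_0 \prod_{k'<k}\vartheta_{k'} = \sigma_0/\lambda_k$, so $\vartheta_k^{-1} = \sqrt{1+\sigma_0\gamma/\lambda_k}$, hence $\lambda_{k+1} = \sqrt{\lambda_k^2 + \sigma_0\gamma\lambda_k} \leq \lambda_k + \sigma_0\gamma/2$. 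Iterating this additive recursion from $\lambda_1 = \vartheta_0^{-1} = \sqrt{1+\sigma_0\gamma} \leq 1 + \sigma_0\gamma/2$ yields $\lambda_k \leq 1 + k\sigma_0\gamma/2$. For the lower bound, from $\lambda_{k+1} = \sqrt{\lambda_k^2+\sigma_0\gamma\lambda_k}$ and $\lambda_k \geq 1$ I would show $\lambda_{k+1} \geq \lambda_k + \sigma_0\gamma/(\sqrt{1+\sigma_0\gamma}+1)$ — using $\sqrt{a^2+b} - a = b/(\sqrt{a^2+b}+a)$ with $a=\lambda_k$, $b = \sigma_0\gamma\lambda_k$, so the increment is $\sigma_0\gamma\lambda_k/(\lambda_{k+1}+\lambda_k) \geq \sigma_0\gamma/(\lambda_{k+1}/\lambda_k + 1) \geq \sigma_0\gamma/(\sqrt{1+\sigma_0\gamma}+1)$ since $\lambda_{k+1}/\lambda_k = \vartheta_k^{-1} = \sqrt{1+\sigma_0\gamma/\lambda_k} \leq \sqrt{1+\sigma_0\gamma}$. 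Summing from $\lambda_0 = 1$ gives the claimed lower bound.

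Finally, for $\nu_k = \bigl[\sum_{k'=0}^{k-1}\lambda_{k'}\bigr]^{-1}$ I would simply insert the lower bound $\lambda_{k'} \geq 1 + k'\sigma_0\gamma/(\sqrt{1+\sigma_0\gamma}+1)$ and sum: $\sum_{k'=0}^{k-1}\lambda_{k'} \geq k + \frac{\sigma_0\gamma}{\sqrt{1+\sigma_0\gamma}+1}\sum_{k'=0}^{k-1}k' = k + \frac{(k-1)k\sigma_0\gamma}{2(\sqrt{1+\sigma_0\gamma}+1)}$, and invert. The $\mO(1/k^2)$ statement is then immediate since the denominator grows quadratically in $k$ whenever $\gamma > 0$.

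I expect the main obstacle to be resisting the temptation to telescope the product multiplicatively (which loses the polynomial rate) and instead recognizing the self-referential identity $\sigma_k = \sigma_0/\lambda_k$, which converts the product recursion into the additive recursion $\lambda_{k+1} = \sqrt{\lambda_k^2 + \sigma_0\gamma\lambda_k}$; everything else is then a routine induction on this scalar recursion. A minor technical point worth double-checking is the base case bookkeeping (whether the products start at $k'=0$ and whether $\lambda_0 = 1$, the empty product), to make sure the index ranges in the two bounds of~\eqref{eq:acc1_num_seq_est} line up exactly.
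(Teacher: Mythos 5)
Your proof is correct and follows essentially the same route as the paper: the paper establishes the exact increment identity $\lambda_{k+1}-\lambda_k=\sigma_0\gamma/(\sqrt{1+\sigma_k\gamma}+1)$ (via $\sigma_k=\sigma_0/\lambda_k$) and then bounds the denominator from above by $\sqrt{1+\sigma_0\gamma}+1$ and below by $2$, which is precisely what your additive recursion $\lambda_{k+1}=\sqrt{\lambda_k^2+\sigma_0\gamma\lambda_k}$ encodes. Your worry about multiplicative telescoping and the care over index ranges and the base case $\lambda_0=1$ are well placed and correctly handled.
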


\begin{proof}
  Observing that $\lambda_{k+1} = \frac1{\vartheta_k} \lambda_k$ as well
  as $\sigma_k = \frac{\sigma_0}{\lambda_k}$ we find that $\lambda_k \geq 1$
  and
  \[
  \lambda_{k+1} - \lambda_k = \lambda_k
  \bigl( \sqrt{1 + \sigma_k\gamma} - 1 \bigr)
  = \frac{\lambda_k}{\sqrt{1 + \sigma_k\gamma} + 1}{\sigma_k\gamma}
  = \frac{\sigma_0 \gamma}{\sqrt{1 + \sigma_k \gamma} + 1}.
  \]
  for all $k \geq 0$.
  The sequence $\seq{\sigma_k}$ is monotonically decreasing and positive,
  so estimating the denominator accordingly gives the bounds on $\lambda_k$.
  Summing up yields the estimate on $\nu_k$, in particular, for
  $k \geq 2$,
  we have $\nu_k \leq \frac{4(\sqrt{1 + \sigma_0 \gamma} + 1)}{\sigma_0\gamma}
  \frac1{k^2}$, i.e.,
  $\nu_k = \mO(1/k^2)$.
\end{proof}

\begin{proposition}
  \label{prop:acc1_weak_convergence}
  If~\eqref{eq:saddle-point-prob} possesses a solution, then
  the iteration~\eqref{eq:accelerated-douglas-rachford_} converges
  to a saddle-point $(x^*,y^*)$ of~\eqref{eq:saddle-point-prob}
  in the following sense:
  \[
  \lim_{k \to \infty} x^k = x^* \quad
  \text{with} \quad \norm{x^k - x^*}^2 = \mO(1/k^2),
  \qquad
  \wlim_{k \to \infty} y^k = y^*.
  \]
  In particular, each saddle-point $(x',y')$ of~\eqref{eq:saddle-point-prob}
  satisfies $x' = x^*$.
\end{proposition}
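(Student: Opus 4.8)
The plan is to specialize Proposition~\ref{prop:acc1_lagrangian_est} to $\rho=0$, so that $(x_\rho,y_\rho)$ collapses to an arbitrary fixed saddle-point $(x',y')$ and the Lagrangian difference vanishes. Write $E_k=\tfrac1{2\sigma_k}\norm{\hat x^k-x'+\sigma_k\mK^*y'}^2+\tfrac1{2\tau_k}\norm{\hat y^k-y'-\tau_k\mK x'}^2$ for the right-hand side of~\eqref{eq:acc1_lagrangian_est} and $R_k\ge 0$ for the sum of its three non-negative residual terms (those in $\hat x^k-\bar x^{k+1}$, $\hat y^k-\bar y^{k+1}$ and $x^{k+1}-x'$); the estimate then reads $\vartheta_k^{-1}E_{k+1}+R_k\le E_k$. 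Multiplying by $\lambda_k$ and using $\lambda_{k+1}=\vartheta_k^{-1}\lambda_k$ (see~\eqref{eq:acc1_erg_weights}) gives $\lambda_{k+1}E_{k+1}+\lambda_kR_k\le\lambda_kE_k$, which telescopes to $\lambda_NE_N+\sum_{k=0}^{N-1}\lambda_kR_k\le E_0$. Hence $\seq{\lambda_kE_k}$ is non-increasing and bounded by $E_0$ and $\sum_k\lambda_kR_k<\infty$. Since $\sigma_k=\sigma_0/\lambda_k$, $\tau_k=\tau_0\lambda_k$, and $\lambda_k\ge 1+ck$ for some $c>0$ by Lemma~\ref{lem:acc1_num_seq} (so $\sigma_k\to0$, $\tau_k\to\infty$, $\vartheta_k\to1$), I read off everything I need: $\norm{\hat x^k-x'+\sigma_k\mK^*y'}\le\sqrt{2\sigma_0E_0}/\lambda_k$, $\norm{\hat y^k-y'-\tau_k\mK x'}\le\sqrt{2\tau_0E_0}$, $\lambda_k\norm{\hat x^k-\bar x^{k+1}}\to0$, $\hat y^k-\bar y^{k+1}\to0$, and $\sum_k\lambda_k\norm{x^{k+1}-x'}^2<\infty$.

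For the primal rate I use that optimality~\eqref{eq:saddle-point-optimality} gives $-\mK^*y'\in\subgrad\mF(x')$, i.e.\ $x'=(\id+\sigma_k\subgrad\mF)^{-1}(x'-\sigma_k\mK^*y')$; as this resolvent is non-expansive, the first line of~\eqref{eq:accelerated-douglas-rachford:notable} gives $\norm{x^{k+1}-x'}\le\norm{\hat x^k-x'+\sigma_k\mK^*y'}\le\sqrt{2\sigma_0E_0}/\lambda_k$, whence $\norm{x^k-x'}^2=\mO(1/\lambda_k^2)=\mO(1/k^2)$ and $x^k\to x'$ strongly. A strong limit is unique and $(x',y')$ was arbitrary, so every saddle-point has the same first component, which I call $x^*$; this proves the last assertion.

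For the dual variable I set $v^k=\hat y^k-\tau_k\mK x^*$, which is bounded by the second bound above. Since $\mK x^*\in\subgrad\mG(y')$ gives $y'=(\id+\tau_k\subgrad\mG)^{-1}(y'+\tau_k\mK x^*)$ and the second line of~\eqref{eq:accelerated-douglas-rachford:notable} reads $y^{k+1}=(\id+\tau_k\subgrad\mG)^{-1}(v^k+\tau_k\mK x^*)$, non-expansiveness gives $\norm{y^{k+1}-y'}\le\norm{v^k-y'}$, so $\seq{y^k}$ is bounded and has weak cluster points. To identify them I substitute the expressions for $\tilde x^{k+1}=x^{k+1}-\bar x^{k+1}$ and $\tilde y^{k+1}=y^{k+1}-\bar y^{k+1}$ from~\eqref{eq:accelerated-douglas-rachford:notable} into the resolvent inclusions $\tfrac1{\sigma_k}(\hat x^k-x^{k+1})\in\subgrad\mF(x^{k+1})$ and $\tfrac1{\tau_k}(\hat y^k-y^{k+1})\in\subgrad\mG(y^{k+1})$, turning the iteration into
\[
\tfrac1{\sigma_k}(\hat x^k-\bar x^{k+1})+\vartheta_k^{-1}\mK^*(\hat y^k-\bar y^{k+1})\in\mK^*y^{k+1}+\subgrad\mF(x^{k+1}),
\]
\[
\tfrac1{\tau_k}(\hat y^k-\bar y^{k+1})-\vartheta_k\mK(\hat x^k-\bar x^{k+1})\in-\mK x^{k+1}+\subgrad\mG(y^{k+1}),
\]
where the left-hand sides converge strongly to $0$ (using $\tfrac1{\sigma_k}=\lambda_k/\sigma_0$ with $\lambda_k\norm{\hat x^k-\bar x^{k+1}}\to0$, $\tfrac1{\tau_k}\to0$, $\hat y^k-\bar y^{k+1}\to0$, $\vartheta_k\to1$). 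Since $(x,y)\mapsto(\mK^*y+\subgrad\mF(x),\,-\mK x+\subgrad\mG(y))$ is maximally monotone, hence weak-strong closed, and $x^{k+1}\to x^*$ strongly, any weak cluster point $y^*$ of $\seq{y^k}$ yields a saddle-point $(x^*,y^*)$.

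The hard part is uniqueness of this cluster point. For every $y$ with $(x^*,y)$ a saddle-point, the argument of the first paragraph (with $(x^*,y)$ in place of $(x',y')$) shows $\lambda_kE_k^{(y)}$ converges, where $\lambda_kE_k^{(y)}=\tfrac{\lambda_k^2}{2\sigma_0}\norm{\hat x^k-x^*+\sigma_k\mK^*y}^2+\tfrac1{2\tau_0}\norm{v^k-y}^2$. I would then expand, via the iteration identities, $\lambda_k(\hat x^k-x^*)=s_k-\sigma_0\mK^*y^k+o(1)$ and $v^k=y^k+\tau_0\mK s_k+o(1)$, where $s_k:=\lambda_k(x^k-x^*)$ is bounded by the primal estimate and the $o(1)$ terms come from $\hat y^{k-1}-\bar y^k\to0$ and $\lambda_{k-1}(\hat x^{k-1}-\bar x^k)\to0$; inserting these and expanding the squares, the cross terms coupling $s_k$ with $y^k$ cancel and one is left with
\[
\lambda_kE_k^{(y)}=\tfrac1{2\sigma_0}\norm{s_k}^2+\tfrac{\tau_0}{2}\norm{\mK s_k}^2+\tfrac12\norm[S]{y^k-y}^2+o(1),\qquad S:=\sigma_0\mK\mK^*+\tfrac1{\tau_0}\id,
\]
with $\norm[S]{w}^2=\scp{Sw}{w}$ and $S$ bounded, self-adjoint, positive definite. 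As the first two terms do not depend on $y$, subtracting the relations for two admissible $y_1,y_2$ shows $\scp{y^k}{S(y_1-y_2)}$ converges; if $y^{k_i}\weakto y^*$ and $y^{l_j}\weakto y^{**}$, taking $y_1=y^{**}$, $y_2=y^*$ and passing to the limit along both subsequences gives $0=\scp{y^{**}-y^*}{S(y^{**}-y^*)}$, so $y^*=y^{**}$ by positive definiteness. Hence $y^k\weakto y^*$ with $(x^*,y^*)$ a saddle-point. The genuine obstacle is exactly this last paragraph: deriving the two expansions with provably vanishing remainders and verifying the cancellation of the $s_k$--$y^k$ cross terms; once $\lambda_kE_k^{(y)}$ has been split into a $y$-independent part plus $\tfrac12\norm[S]{y^k-y}^2$, the rest is a standard Opial argument.
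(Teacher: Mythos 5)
Your proposal is essentially correct and follows the same high-level architecture as the paper's proof: specialize Proposition~\ref{prop:acc1_lagrangian_est} to $\rho=0$, telescope with the weights $\lambda_k$, extract the residual bounds, establish the primal rate, verify cluster points are saddle-points via weak-strong closedness of maximally monotone operators, and close with an Opial argument. Two of your steps, however, take a genuinely different and arguably cleaner route than the paper.

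For the primal rate, you use firm non-expansiveness of the resolvent $(\id+\sigma_k\subgrad\mF)^{-1}$ directly against $x'=(\id+\sigma_k\subgrad\mF)^{-1}(x'-\sigma_k\mK^*y')$ to get $\norm{x^{k+1}-x'}\le\norm{\hat x^k-x'+\sigma_k\mK^*y'}\le\sqrt{2\sigma_0E_0}/\lambda_k$. The paper instead passes through the explicit inversion of the coupled linear system~\eqref{eq:acc1_inversion_step} to express $x^k-x'$ in terms of $\xi^k,\zeta^k,\bar x^k-\hat x^{k-1},\bar y^k-\hat y^{k-1}$ and then bounds the result by $C\sigma_k^2$. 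Your version is simpler and delivers the same rate, and it also directly gives boundedness of $s_k=\lambda_k(x^k-x^*)$. For the dual uniqueness, your decomposition $\lambda_kE_k^{(y)}=(\text{$y$-independent})+\tfrac12\norm[S]{y^k-y}^2+o(1)$ with $S=\sigma_0\mK\mK^*+\tau_0^{-1}\id$ makes the Opial structure explicit; the cancellation $-\scp{s_k}{\mK^*(y^k-y)}+\scp{y^k-y}{\mK s_k}=0$ you worry about does indeed go through (both terms equal $\scp{\mK s_k}{y^k-y}$ with opposite signs), and the $o(1)$ remainders are controlled by $\lambda_k(\bar x^k-\hat x^{k-1})\to 0$ and $\bar y^k-\hat y^{k-1}\to 0$ together with boundedness of $s_k$ and $y^k$. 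The paper's version of this argument works with the subsequential weak limits of $\tfrac1{\sigma_k}\xi^k$ and $\zeta^k$ and two identities relating them; this is equivalent but the quadratic-form cancellation is less visible there.

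One small gap: the estimate~\eqref{eq:acc1_lagrangian_est} with $\rho=0$ needs $\gamma<2\gamma_1/\bigl(1+(1+2\sigma_k\gamma_1)\sigma_0\tau_0\norm{\mK}^2\bigr)$, which for small $k$ is \emph{stricter} than the~\eqref{eq:accelerated-douglas-rachford_} initialization condition $\gamma<2\gamma_1/(1+\sigma_0\tau_0\norm{\mK}^2)$. You write the telescoped inequality $\lambda_NE_N+\sum_{k=0}^{N-1}\lambda_kR_k\le E_0$ as if it held from $k=0$, but it only holds from some $k_0$ onwards (chosen so that $\sigma_k$, which tends to $0$, is small enough to satisfy~\eqref{eq:acc_gamma_restr}); the paper introduces this $k_0$ explicitly. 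This does not change the asymptotic conclusions — replace $E_0$ by $\lambda_{k_0}E_{k_0}$ — but the proof should acknowledge the threshold.
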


\begin{proof}
  Observe that since~\eqref{eq:accelerated-douglas-rachford_} requires
  $\gamma < 2\gamma_1/(1 + \sigma_0\tau_0 \norm{\mK}^2)$,
  and since $\sigma_k = \sigma_0/\lambda_k$ as well as
  Lemma~\ref{lem:acc1_num_seq} implies
  $\lim_{k \to \infty} \sigma_k = 0$, there is a
  $k_0 \geq 0$ such that~\eqref{eq:acc_gamma_restr} is satisfied
  for $\rho = 0$ and
  $\sigma_k$ for all $k \geq k_0$. Letting $(x',y')$ be a saddle-point,
                      applying~\eqref{eq:acc1_lagrangian_est}
  recursively gives
  \begin{align}
    \label{eq:acc1_convergence_est}
    \notag
    \lambda_k
    \Bigl[
    \frac1{2\sigma_k} & \norm{\hat x^{k} - x' + \sigma_{k} \mK^*y'}^2
    + \frac1{2\tau_k} \norm{\hat y^{k} - y' - \tau_{k} \mK x'}^2
      \Bigr]
    \\
    \notag
    & \quad + \sum_{k'=k_0}^{k-1}
      \lambda_{k'} \Bigl[
      \frac{1-c}{2\sigma_{k'}} \norm{\hat x^{k'} - \bar x^{k'+1}}^2 +
      \frac{1-c}{2\tau_{k'}} \norm{\hat y^{k'} - \bar y^{k'+1}}^2
      + \frac{c'}{2} \norm{x^{k'+1} - x'}^2\Bigr]
    \\
    & \leq \lambda_{k_0} \Bigl[ \frac1{2\sigma_{k_0}}
      \norm{\hat x^{k_0} - x' + \sigma_{k_0} \mK^*y'}^2 + \frac1{2\tau_{k_0}}
      \norm{\hat y^{k_0} - y' - \tau_{k_0} \mK x'}^2 \Bigr].
  \end{align}
  This implies, on the one hand, the convergence of
  the series
  \[
  \sum_{k=0}^{\infty}
  \lambda_{k} \Bigl[
  \frac{1}{2\sigma_{k}} \norm{\hat x^{k} - \bar x^{k+1}}^2 +
  \frac{1}{2\tau_{k}} \norm{\hat y^{k} - \bar y^{k+1}}^2
  + \frac{1}{2} \norm{x^{k+1} - x'}^2\Bigr] < \infty.
  \]
  As $\lambda_k = \sigma_0/\sigma_k = \tau_k/\tau_0$,
  we have in particular that $\lim_{k \to \infty}
  \frac1{\sigma_k}(\hat x^k - \bar x^{k+1}) = 0$ and
  $\lim_{k \to \infty} (\hat y^k - \bar y^{k+1}) = 0$.
  Furthermore, defining
  \[
  d^k(x',y') =
  \lambda_k
  \Bigl[
  \frac1{2\sigma_k} \norm{\hat x^{k} - x' + \sigma_{k} \mK^*y'}^2
  + \frac1{2\tau_k} \norm{\hat y^{k} - y' - \tau_{k} \mK x'}^2
  \Bigr],
  \]
  the limit $\lim_{k \to \infty} d^k(x',y') = d^*(x',y')$
  has to exist: On the one hand, the sequence
  is bounded from below and admits a finite limes
  inferior. On the other hand, traversing
  with $k_0$ a subsequence that converges to the
  limes inferior, the estimate~\eqref{eq:acc1_convergence_est}
  yields that limes superior and limes inferior have to coincide, hence,
  the sequence is convergent.
  Denoting by
  \[
  \xi^k = \hat x^{k} - x' + \sigma_{k} \mK^*y', \qquad
  \zeta^k = \hat y^{k} - y' - \tau_{k} \mK x',
  \]
  we further conclude that $\seq{\frac1{\sigma_k} \xi^k}$
  as well as $\seq{\zeta^k}$
  are bounded.
  Plugging in the iteration~\eqref{eq:accelerated-douglas-rachford:notable}
  gives the identities
  \begin{equation}
    \label{eq:acc1_inversion_step}
    \left\{
      \begin{aligned}
        x^k - x' - \sigma_k \mK^*(y^k - y') &= \xi^k
        + \vartheta_{k-1}^{-1} \sigma_k \mK^*(\bar y^k - \hat y^{k-1}), \\
        y^k - y' + \tau_k \mK(x^k - x') &= \zeta^k
        - \vartheta_{k-1} \tau_k \mK(\bar x^k - \hat x^{k-1}), \\
      \end{aligned}
    \right.
  \end{equation}
                which can be solved with respect to $x^k - x'$ and $y^k - y'$ yielding
  \[
  \left\{
    \begin{aligned}
      x^k - x' &=
      (\id + \sigma_0\tau_0\mK^*\mK)^{-1}\bigl[\xi^k
      + \sigma_k \mK^*\bigl(\zeta^k  +
      \vartheta_{k-1}^{-1}(\bar y^k - \hat y^{k-1}) \bigr)
      - \sigma_0\tau_0\vartheta_{k-1}
      \mK^*\mK(\bar x^k - \hat x^{k-1})
      \bigr], \\
      y^k - y' &=
      (\id + \sigma_0\tau_0\mK\mK^*)^{-1}\bigl[
      \zeta^k
      - \tau_k \mK\bigl(\xi^k +
      \vartheta_{k-1}(\bar x^k - \hat x^{k-1}) \bigr)
      - \sigma_0\tau_0 \vartheta_{k-1}^{-1}
      \mK\mK^*(\bar y^k - \hat y^{k-1}) \bigr].
    \end{aligned}
  \right.
  \]
  Regarding the norm of $x^k - x'$, the
  boundedness and convergence properties of the involved terms allow
  to conclude that
  \[
  \norm{x^k - x'}^2 \leq C \sigma_k^2 = \mO(1/k^2)
  \]
  and, in particular, $\lim_{k \to \infty} x^k = x'$
  as well as coincidence of the primal part for each saddle-point.
  Further, choosing a subsequence associated with the indices
  $\seq{k_i}$ such that
  $\wlim_{i \to \infty} \frac1{\sigma_{k_i}} \xi^{k_i} = \xi'$ and
  $\wlim_{i \to \infty} \zeta^{k_i} = \zeta'$
  (such a subsequence must exist), we see with
  $\tau_k = \sigma_0 \tau_0 /\sigma_k$
  that
  \[
  \begin{aligned}
    \wlim_{i \to \infty}\ \tfrac1{\sigma_{k_i}} (x^{k_i} - x')
    &= (\id + \sigma_0\tau_0 \mK^*\mK)^{-1}(\xi' + \mK^*\zeta'), \\
    \wlim_{i \to \infty}\ (y^{k_i} - y')
    &= (\id + \sigma_0\tau_0 \mK \mK^*)^{-1}(\zeta' - \sigma_0\tau_0 \mK \xi').
  \end{aligned}
  \]
  Weakening the statements, we
  conclude that $\wlim_{i \to \infty} (x^{k_i}, y^{k_i}) = (x', y^*)$
  for some $y^* \in Y$.
          Looking at the iteration~\eqref{eq:accelerated-douglas-rachford:notable} again
  yields
  \[
  \left\{
    \begin{aligned}
      \frac1{\sigma_{k_i-1}} (\hat x^{k_i - 1} - \bar x^{k_i})
      - \vartheta_{k_i-1}^{-1}\mK^*(\bar y^{k_i} - \hat y^{k_i - 1})
      &\in \mK^* y^{k_i} + \subgrad \mF(x^{k_i}), \\
      \frac1{\tau_{k_i-1}} (\hat y^{k_i-1} - \bar y^{k_i})
      + \vartheta_{k_i-1}\mK(\bar x^{k_i} - \hat x^{k_i-1})
      &\in  - \mK x ^{k_i} + \subgrad \mG(y^{k_i}),
    \end{aligned}
  \right.
  \]
  so by weak-strong closedness of maximally monotone operators, it follows
  that
  $(0,0) \in
  \bigl(\mK^* y^* + \subgrad \mF(x'), -\mK x' + \subgrad  \mG(y^*) \bigr)$,
  i.e., $(x', y^*)$ is a saddle-point. As the subsequence was arbitrary,
  each weak accumulation point of $\seq{(x^k,y^k)}$ is a saddle-point.

  Suppose that $(q^{*}, y^{*})$ and $(q^{**}, y^{**})$ are both
  weak accumulation points of $\seq{\bigl( \tfrac1{\sigma_k}(x^k - x'),
  y^k \bigr)}$, i.e.,
  $\wlim_{i \to \infty} \bigl( \tfrac1{\sigma_{k_i}}(x^{k_i} - x'),y^{k_i} \bigr)
  = (q^*,y^*)$ and
  $\wlim_{i \to \infty} \bigl( \tfrac1{\sigma_{k_i}} (x^{k'_i} - x'), y^{k'_i} \bigr)
  = (q^{**}, y^{**})$
  for some $\seq{k_i}$ and $\seq{k'_i}$, respectively.
  Denote by $(\xi^k, \zeta^k)$ as above but with $y'$ replaced
  by $y^*$.
                Without loss of
  generality, we may assume that
  \[
  \wlim_{i \to \infty}\ (\tfrac1{\sigma_{k_i}} \xi^{k_i}, \zeta^{k_i})
  = (\xi^{*}, \zeta^{*}),
  \qquad
  \wlim_{i \to \infty}\ (\tfrac1{\sigma_{k'_i}} \xi^{k'_i}, \zeta^{k'_i})
  = (\xi^{**}, \zeta^{**})
  \]
  for some $(\xi^{*}, \zeta^{*}), (\xi^{**}, \zeta^{**}) \in X \times Y$.
  Dividing the first identity in~\eqref{eq:acc1_inversion_step} by
  $\sigma_k$ and passing both identities to the respective weak
  subsequential limits yields
  \[
  \xi^* - \xi^{**} = q^* - q^{**} - \mK^*(y^* - y^{**}),
  \qquad \zeta^* - \zeta^{**} = y^* - y^{**} + \sigma_0\tau_0 \mK (q^* - q^{**}).
  \]
  Now, plugging in the definitions,
  we arrive at
  \begin{align*}
    d^k(x',y^*) - d^k(x',y^{**})
    &= \frac{\sigma_0}{2} \scp{\tfrac{2}{\sigma_k}\xi^k
      + \mK^*(y^{**} - y^*)}{
      \mK^*(y^* - y^{**})}
    + \frac1{2\tau_0} \scp{2\zeta^k + y^{*} - y^{**}}{y^{**} - y^{*}}
                          \end{align*}
  so that rearranging
      implies
  \begin{align*}
    \sigma_0 \scp{\tfrac1{\sigma_k}\xi^k}{\mK^*(y^* - y^{**})}
    + &\frac{1}{\tau_0}\scp{\zeta^k}{y^{**} - y^{*}}
    \\
    &=
      d^k(x', y^*) - d^k(x', y^{**})
      + \frac{\sigma_0}2 \norm{\mK^*(y^* - y^{**})}^2
      + \frac{1}{2\tau_0} \norm{y^* - y^{**}}^2.
  \end{align*}
  The right-hand side converges for the whole sequence, so passing
  the left-hand side to the respective subsequential weak limits
  and using the identities for $\xi^* - \xi^{**}$ and $\zeta^* - \zeta^{**}$
  gives
  \[
  0 = \sigma_0 \scp{\xi^* - \xi^{**}}{\mK^*(y^{**} -y^{*})}
  + \frac1{\tau_0} \scp{\zeta^* - \zeta^{**}}{y^{*} - y^{**}}
  = \sigma_0 \norm{\mK^*(y^* - y^{**})}^2 + \frac1{\tau_0}
  \norm{y^* - y^{**}}^2,
  \]
  and hence, $y^{**} = y^{*}$. Consequently, $y^k \weakto y^*$
  as $k \to \infty$ what was left to show.
                                                                \end{proof}

In addition to the convergence speed $\mO(1/k^2)$ for $\norm{x^k - x}^2$,
restricted primal-dual gaps also converge and a restricted
primal error of energy also admits a rate.

\begin{corollary}
  Let
  $X_0 \times Y_0 \subset \dom \mF \times \dom \mG$ be bounded and
  contain a saddle-point. Then, for the sequence generated
  by~\eqref{eq:accelerated-douglas-rachford_}, it holds that
  \[
  \gap_{X_0 \times Y_0}(x^k, y^k) \to 0 \quad
  \text{as} \quad k \to \infty, \qquad
  \err^p_{Y_0}(x^k) = \mo(1/k).
  \]
\end{corollary}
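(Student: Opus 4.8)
The plan is to read both statements off from the one-step estimate obtained in the course of proving Lemma~\ref{lem:acc_lagrangian_est}, namely~\eqref{eq:acc1_lagrangian_est0} with the parameters of~\eqref{eq:accelerated-douglas-rachford_} inserted ($\sigma=\sigma_k$, $\tau=\tau_k$, $\vartheta_p=\vartheta_k^{-1}$, $\vartheta_d=\vartheta_k$, $\gamma_2=0$), combined with what Proposition~\ref{prop:acc1_weak_convergence} already provides: $x^k\to x^*$ strongly with $\norm{x^k-x^*}^2=\mO(1/k^2)$, $y^k\weakto y^*$ (so $\seq{y^k}$ is bounded), $\tfrac1{\sigma_k}(\hat x^k-\bar x^{k+1})\to0$, $\hat y^k-\bar y^{k+1}\to0$, and — via the summable series there, whose $\hat x^k-\bar x^{k+1}$ term carries the weight $\lambda_k/\sigma_k=\lambda_k^2/\sigma_0$ — the stronger decay $\lambda_k\norm{\hat x^k-\bar x^{k+1}}\to0$; moreover $\sigma_k\to0$ (hence $\vartheta_k\to1$) and $\tau_k=\tau_0\lambda_k\geq c\,k$ for some $c>0$ by Lemma~\ref{lem:acc1_num_seq}. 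Note that~\eqref{eq:acc1_lagrangian_est0} holds for every $\rho\in[0,1]$ and every $k$ with no step-size restriction, so we may freely take $\rho=1$.

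For $\gap_{X_0\times Y_0}(x^k,y^k)\to0$ I would argue as in the corollary to Proposition~\ref{prop:dr-weak-convergence}. Taking $\rho=1$ (so $x_\rho=x$, $y_\rho=y$) in~\eqref{eq:acc1_lagrangian_est0} and discarding the nonpositive term $-\tfrac{\gamma_1}2\norm{x^{k+1}-x}^2$, the difference $\mL(x^{k+1},y)-\mL(x,y^{k+1})$ is bounded by a sum of four bilinear pairings, in each of which one factor — one of $\tfrac1{\sigma_k}(\hat x^k-\bar x^{k+1})$, $\hat y^k-\bar y^{k+1}$, $1-\vartheta_k^{-1}$, $1-\vartheta_k$ — tends to zero, while the other stays bounded uniformly over $(x,y)\in X_0\times Y_0$; for the latter one uses that $\bar x^{k+1}=x^{k+1}-\sigma_k\mK^*[y^{k+1}+\vartheta_k^{-1}(\bar y^{k+1}-\hat y^k)]$ converges, that $\tfrac1{\tau_k}\bar y^{k+1}=\tfrac1{\tau_k}y^{k+1}+\mK[x^{k+1}+\vartheta_k(\bar x^{k+1}-\hat x^k)]$ stays bounded, and that $\seq{x^{k+1}}$, $\seq{y^{k+1}}$, $\seq{\hat x^k-\bar x^{k+1}}$, $\seq{\hat y^k-\bar y^{k+1}}$ and $X_0\times Y_0$ are bounded. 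Hence the supremum over $(x,y)\in X_0\times Y_0$ of the right-hand side, which is $\gap_{X_0\times Y_0}(x^{k+1},y^{k+1})$, tends to $0$; non-negativity is Proposition~\ref{prop:restricted-error-func}.

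For $\err^p_{Y_0}(x^k)=\mo(1/k)$ I would fix the saddle point $(x^*,y^*)\in X_0\times Y_0$ from the hypothesis — its primal component is the limit $x^*$ by Proposition~\ref{prop:acc1_weak_convergence} — and split $\err^p_{Y_0}(x^{k+1})=[\mF(x^{k+1})-\mF(x^*)]+[\Phi(x^{k+1})-\Phi(x^*)]$ with $\Phi(x)=\sup_{y'\in Y_0}[\scp{\mK x}{y'}-\mG(y')]$; since $y^*\in Y_0$ and, by optimality, $\scp{\mK x^*}{y^*}-\mG(y^*)=\mG^*(\mK x^*)=\sup_{y\in Y}[\scp{\mK x^*}{y}-\mG(y)]$, one has $\Phi(x^*)=\mG^*(\mK x^*)$. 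From $\tfrac1{\sigma_k}(\hat x^k-x^{k+1})\in\subgrad\mF(x^{k+1})$, $\tfrac1{\tau_k}(\hat y^k-y^{k+1})\in\subgrad\mG(y^{k+1})$ and~\eqref{update:x:y:k+1} one gets $\tfrac1{\sigma_k}(\hat x^k-x^{k+1})=-\mK^*y^{k+1}+\tilde s^{k+1}$ and $\tfrac1{\tau_k}(\hat y^k-y^{k+1})=\mK x^{k+1}+\tilde r^{k+1}$ with $\tilde s^{k+1}=\tfrac1{\sigma_k}(\hat x^k-\bar x^{k+1})-\vartheta_k^{-1}\mK^*(\bar y^{k+1}-\hat y^k)$ and $\tilde r^{k+1}=\tfrac1{\tau_k}(\hat y^k-\bar y^{k+1})+\vartheta_k\mK(\bar x^{k+1}-\hat x^k)$; by the facts collected above, $\norm{\tilde s^{k+1}}\to0$ and $\norm{\tilde r^{k+1}}=\mo(1/k)$. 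Strong convexity gives $\mF(x^{k+1})-\mF(x^*)\leq\scp{-\mK^*y^{k+1}+\tilde s^{k+1}}{x^{k+1}-x^*}-\tfrac{\gamma_1}2\norm{x^{k+1}-x^*}^2$, in which $\scp{\tilde s^{k+1}}{x^{k+1}-x^*}=\mo(1/k)$ because $\norm{x^{k+1}-x^*}=\mO(1/k)$; and the subgradient inequality for $\mG$ tested against $y'\in Y_0$, after substituting the expression for $\tfrac1{\tau_k}(\hat y^k-y^{k+1})$ and taking the supremum, yields $\Phi(x^{k+1})\leq\scp{\mK x^{k+1}}{y^{k+1}}-\mG(y^{k+1})+\mo(1/k)$, the remainder being bounded by $\norm{\tilde r^{k+1}}\sup_{y'\in Y_0}\norm{y'-y^{k+1}}$. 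Adding the two estimates and using $\scp{-\mK^*y^{k+1}}{x^{k+1}-x^*}=-\scp{\mK x^{k+1}}{y^{k+1}}+\scp{\mK x^*}{y^{k+1}}$, the $\scp{\mK x^{k+1}}{y^{k+1}}$ terms cancel and one is left with
\[
\err^p_{Y_0}(x^{k+1})\leq\bigl[\scp{\mK x^*}{y^{k+1}}-\mG(y^{k+1})-\mG^*(\mK x^*)\bigr]-\tfrac{\gamma_1}2\norm{x^{k+1}-x^*}^2+\mo(1/k).
\]
The bracket is $\leq0$ since $\mG^*(\mK x^*)=\sup_y[\scp{\mK x^*}{y}-\mG(y)]$, so $\err^p_{Y_0}(x^{k+1})\leq\mo(1/k)$; together with $\err^p_{Y_0}\geq0$ (Proposition~\ref{prop:restricted-error-func}) this is the claim after an index shift.

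The uniform-boundedness bookkeeping in the second paragraph and the substitutions in the third are routine. The two genuine points are: extracting the \emph{faster} decay $\norm{\hat x^k-\bar x^{k+1}}=\mo(1/\lambda_k)$ from the summable series (this is what upgrades the naive $\mO(1/k)$ bounds to $\mo(1/k)$), and the reorganization by which the only contributions lacking an intrinsic rate — those carrying the weakly-only-convergent $y^{k+1}$ — collapse into the nonpositive Fenchel gap $\scp{\mK x^*}{y^{k+1}}-\mG(y^{k+1})-\mG^*(\mK x^*)$.
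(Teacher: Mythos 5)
Your proof is correct. The first claim is obtained essentially as in the paper: take $\rho=1$ in~\eqref{eq:acc1_lagrangian_est0}, show each bilinear pairing on the right has one factor tending to zero and the other bounded uniformly over $X_0\times Y_0$ (using the boundedness of $\seq{\xi^k/\sigma_k}$, $\seq{\zeta^k}$ and $\seq{(x^k,y^k)}$), and take the supremum. For the rate $\err^p_{Y_0}(x^k)=\mo(1/k)$ you take a genuinely different route. The paper specializes $X_0=\sett{x^*}$ so that the refined bound $\norm{\hat x^k-x^*+\sigma_k\mK^*y}=\mO(\sigma_k)$ upgrades the very same one-step estimate~\eqref{eq:acc1_nonergodic_est} to $\gap_{\sett{x^*}\times Y_0}(x^{k+1},y^{k+1})=\mo(1/k)$, and then invokes Proposition~\ref{prop:restricted-error-func} to pass to $\err^p_{Y_0}$. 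You instead decompose $\err^p_{Y_0}(x^{k+1})$ into its $\mF$- and $\Phi$-parts, re-derive the required inequalities directly from the resolvent inclusions $\tfrac1{\sigma_k}(\hat x^k-x^{k+1})\in\subgrad\mF(x^{k+1})$, $\tfrac1{\tau_k}(\hat y^k-y^{k+1})\in\subgrad\mG(y^{k+1})$ together with~\eqref{update:x:y:k+1}, and observe that after cancellation the only $y^{k+1}$-dependent remainder is the nonpositive Fenchel residual $\scp{\mK x^*}{y^{k+1}}-\mG(y^{k+1})-\mG^*(\mK x^*)$. The quantitative inputs are the same as the paper's ($\norm{x^k-x^*}=\mO(1/k)$, $\norm{\hat x^k-\bar x^{k+1}}=\mo(\sigma_k)$, $\norm{\hat y^k-\bar y^{k+1}}=\mo(1)$, $\tau_k\geq c\,k$ from Lemma~\ref{lem:acc1_num_seq}), but your rearrangement makes explicit exactly why no rate on $y^k$ is needed: the dual contribution collapses into a sign condition. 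The paper's argument is shorter because it reuses~\eqref{eq:acc1_nonergodic_est} wholesale; yours is more self-contained and structurally transparent about the role of the dual variable.
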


\begin{proof}
  From the estimate~\eqref{eq:acc1_lagrangian_est0} in the proof
  of Lemma~\ref{lem:acc_lagrangian_est} with $\rho = 1$
  as well as the
  estimates $1 - \vartheta_k \leq \sigma_k
  \gamma$ and $1/\vartheta_k - 1 \leq
  \sigma_k \gamma$  we infer for $(x,y) \in X_0 \times Y_0$ that
  \begin{align}
    \label{eq:acc1_nonergodic_est}
    \notag
    \mL(&x^{k+1},y)
    - \mL(x,y^{k+1}) \\
    \notag
    & \leq
      \sigma_k
      \norm{\tfrac1{\sigma_k}(\hat x^{k} - \bar x^{k+1})}^2
      +
      \norm{\tfrac1{\sigma_k}(\hat x^{k} - \bar x^{k+1})}
      \norm{\hat x^{k} - x + \sigma_k \mK^*y}
          \\
    \notag
    & \quad +
      \sigma_k \frac{1}{\sigma_0\tau_0}
      \bigl[
      \norm{\hat y^{k} - \bar y^{k+1}}^2 +
      \norm{\hat y^{k} - \bar y^{k+1}}
      \norm{\hat y^{k} - y - \tau_k \mK x}
      \bigr]
    \\
    & \quad
      + \sigma_k \gamma \norm{\mK} \bigl[
    \norm{\bar y^{k+1} - \hat y^{k}}
      \norm{x^{k+1} + \bar x^{k+1} - \hat x^{k} - x}
      +
      \norm{\bar x^{k+1} - \hat x^{k}}
      \norm{y^{k+1} + \bar y^{k+1} - \hat y^{k} - y} \bigr].
  \end{align}
  In the proof of Proposition~\ref{prop:acc1_weak_convergence}
  we have seen that
  $\frac1{\sigma_k} \norm{\hat x^{k} - \bar x^{k+1}} \to 0$ and
  $\norm{\hat y^{k} - \bar y^{k+1}} \to 0$ as $k \to \infty$.
  Moreover,
  denoting by $(x', y') \in X_0 \times Y_0$
  a saddle-point of~\eqref{eq:saddle-point-prob} and using the
  notation and results
  from the proof of
  Proposition~\ref{prop:acc1_weak_convergence}, it follows
  that
  \begin{align*}
    \norm{\hat x^k - x + \sigma_k \mK^*y}
    & \leq \norm{\xi^k} + \sup_{x \in X_0} \norm{x - x'} + \sigma_0
      \norm{\mK} \sup_{y \in Y_0} \norm{y - y'} \leq C,
      \pagebreak[0]
    \\
    \sigma_k \norm{\hat y^k - y - \tau_k \mK x}
    & \leq \sigma_0 \norm{\zeta^k} + \sigma_0 \sup_{y \in Y_0}
      \norm{y - y'} + \sigma_0 \tau_0 \norm{\mK} \sup_{x \in X_0}
      \norm{x - x'} \leq C
  \end{align*}
  for a suitable $C > 0$, so we see together with the boundedness
  of $\seq{(x^k,y^k)}$ that the right-hand
  side of~\eqref{eq:acc1_nonergodic_est} tends to zero as $k \to \infty$,
  giving the estimate on $\gap_{X_0\times Y_0}$.

  Regarding the rate on $\err^p_{Y_0}(x^k)$, choosing $X_0 = \sett{x'}$
  and employing the boundedness of $\seq{\tfrac1{\sigma_k}\xi^k}$,
  the refined estimates
  \[
  \norm{\hat x^k - x' + \sigma_k \mK^*y} \leq
  \sigma_k \bigl[ C' + \norm{\mK} \sup_{y \in Y_0} \norm{y- y'} \bigr],
  \quad
  \sigma_k \norm{\hat y^k - y - \tau_k \mK x'}
  \leq
  \sigma_k \bigl[ C' + \sup_{y \in Y_0} \norm{y - y'} \bigr]
  \]
  follow for some $C' > 0$.
  Plugged into~\eqref{eq:acc1_nonergodic_est}, one obtains
  with $\sigma_k = \mO(1/k)$ the rate   \[
  \gap_{\sett{x'} \times Y_0} (x^{k+1},y^{k+1}) =
  \sup_{y \in Y_0} \ \mL(x^{k+1},y) - \mL(x^*,y^{k+1}) = \mo(1/k).
  \]
  Proposition~\ref{prop:restricted-error-func}
  then yields the desired estimate for $\err^p_{Y_0}$.
            \end{proof}

Now, switching again to ergodic sequences, the optimal rate $\mO(1/k^2)$
can be obtained with the accelerated iteration.

\begin{theorem}
  For $X_0 \times Y_0 \subset \dom\mF \times \dom\mG$ bounded and
  containing a saddle-point $(x',y')$,
  the ergodic sequences according to
  \[
  x^k_{\erg} = \nu_k \sum_{k'=0}^{k-1} \lambda_{k'} x^{k'+1},
  \qquad
  y^k_{\erg} = \nu_k \sum_{k'=0}^{k-1} \lambda_{k'} y^{k'+1},
  \]
  where $\seq{(x^k,y^k)}$ is generated
  by~\eqref{eq:accelerated-douglas-rachford_} and $\lambda_k, \nu_k$
  are given by~\eqref{eq:acc1_erg_weights}, converge
  to the same limit $(x^*,y^*)$ as $\seq{(x^k,y^k)}$,
  in the sense that $\norm{x^k_{\erg} - x^*}^2 = \mO(1/k^2)$
  and $\wlim_{k \to \infty} y^k_{\erg} = y^*$.
  The associated
  restricted primal-dual gap satisfies, for some $k_0 \geq 0$
  and $\rho > 0$,
  the estimate
  \begin{align}
    \label{eq:acc1_erg_convergence}
    \notag
        \gap_{X_0 \times  Y_0}(x^k_{\erg}, y^k_{\erg}) \leq
    \nu_k \sup_{(x,y) \in X_0 \times Y_0}\ \Bigl[
    \frac{\sigma_0}{2\rho\sigma_{k_0}^2}
    &\norm{\hat x^{k_0} - x_\rho + \sigma_{k_0} \mK^*y_\rho}^2  +
      \frac{1}{2\rho\tau_{0}} \norm{\hat y^{k_0} - y_\rho - \tau_{k_0} \mK x_\rho}^2
    \\
    & + \sum_{k'=0}^{k_0 - 1} \lambda_{k'}\bigl(\mL(x^{k'+1}, y)
      - \mL(x, y^{k'+1})\bigr)
      \Bigr] = \mO(1/k^2),
  \end{align}
  where again $x_\rho = \rho x + (1-\rho)x'$ and $y_\rho = \rho y + (1-\rho) y'$.
\end{theorem}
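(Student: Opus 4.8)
The plan is threefold: (i) turn the one‑step estimate of Proposition~\ref{prop:acc1_lagrangian_est} into a telescoping inequality by weighting with $\lambda_k$, (ii) pass to the ergodic gap by convexity and read off the bound~\eqref{eq:acc1_erg_convergence} together with the $\mO(1/k^2)$ rate from Lemma~\ref{lem:acc1_num_seq}, and (iii) identify the limits of the ergodic sequences from the pointwise behaviour established in Proposition~\ref{prop:acc1_weak_convergence}.

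For step~(i) I would first fix a suitable $\rho>0$. Since~\eqref{eq:accelerated-douglas-rachford_} imposes $\gamma<2\gamma_1/(1+\sigma_0\tau_0\norm{\mK}^2)$ while $\sigma_k\tau_k=\sigma_0\tau_0$ and $\sigma_k\to0$ by Lemma~\ref{lem:acc1_num_seq}, the right‑hand side of~\eqref{eq:acc_gamma_restr} converges, as $\sigma\to0$, to $2\gamma_1/\bigl((1+\rho)(1+\sigma_0\tau_0\norm{\mK}^2)\bigr)$, so one can choose $\rho>0$ and then $k_0$ such that~\eqref{eq:acc_gamma_restr} holds for this $\rho$ and $\sigma=\sigma_k$ for all $k\geq k_0$ — the same reasoning as the one already used for $\rho=0$ in the proof of Proposition~\ref{prop:acc1_weak_convergence}. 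For such $k$, Proposition~\ref{prop:acc1_lagrangian_est} applies with step‑sizes $(\sigma_k,\tau_k)$ and $(\sigma',\tau')=(\sigma_{k+1},\tau_{k+1})=(\vartheta_k\sigma_k,\vartheta_k^{-1}\tau_k)$, $\vartheta=\vartheta_k$; discarding the non‑negative residual terms and abbreviating $D_k=\frac1{2\sigma_k}\norm{\hat x^k-x_\rho+\sigma_k\mK^*y_\rho}^2+\frac1{2\tau_k}\norm{\hat y^k-y_\rho-\tau_k\mK x_\rho}^2$, it reads $\rho\bigl(\mL(x^{k+1},y)-\mL(x,y^{k+1})\bigr)+\vartheta_k^{-1}D_{k+1}\leq D_k$. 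Multiplying by $\lambda_k$ and using $\lambda_{k+1}=\lambda_k/\vartheta_k$ produces the telescoping form $\lambda_k\rho\bigl(\mL(x^{k+1},y)-\mL(x,y^{k+1})\bigr)+\lambda_{k+1}D_{k+1}\leq\lambda_kD_k$.

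For step~(ii), summing this from $k_0$ to $k-1$, dropping $\lambda_kD_k\geq0$, and adding back the finitely many terms with $k'<k_0$ yields $\sum_{k'=0}^{k-1}\lambda_{k'}\bigl(\mL(x^{k'+1},y)-\mL(x,y^{k'+1})\bigr)\leq\frac{\lambda_{k_0}}{\rho}D_{k_0}+\sum_{k'=0}^{k_0-1}\lambda_{k'}\bigl(\mL(x^{k'+1},y)-\mL(x,y^{k'+1})\bigr)$. Since $\{\nu_k\lambda_{k'}\}_{k'=0}^{k-1}$ are convex‑combination weights and $(x',y')\mapsto\mL(x',y)-\mL(x,y')$ is convex, multiplying by $\nu_k$ and invoking Jensen's inequality bounds $\mL(x^k_{\erg},y)-\mL(x,y^k_{\erg})$ by $\nu_k$ times the same right‑hand side; using $\lambda_{k_0}=\sigma_0/\sigma_{k_0}$ and $\tau_{k_0}=\sigma_0\tau_0/\sigma_{k_0}$ to rewrite $\frac{\lambda_{k_0}}{\rho}D_{k_0}$ with the coefficients $\frac{\sigma_0}{2\rho\sigma_{k_0}^2}$ and $\frac1{2\rho\tau_0}$, and finally taking the supremum over $(x,y)\in X_0\times Y_0$, gives exactly~\eqref{eq:acc1_erg_convergence}. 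The supremum is finite because $X_0\times Y_0$ is bounded and, the convex lsc functionals $\mF,\mG$ being bounded below on bounded sets, each $\mL(x^{k'+1},y)-\mL(x,y^{k'+1})$ is bounded above on $X_0\times Y_0$; together with $\nu_k=\mO(1/k^2)$ from Lemma~\ref{lem:acc1_num_seq} the $\mO(1/k^2)$ rate follows.

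For step~(iii), the primal part uses that Proposition~\ref{prop:acc1_weak_convergence} gives $\norm{x^{k'+1}-x^*}\leq C/(k'+1)$ for a suitable $C$, while $\lambda_{k'}\leq1+k'\sigma_0\gamma/2$ by Lemma~\ref{lem:acc1_num_seq}, so $\lambda_{k'}\norm{x^{k'+1}-x^*}$ is uniformly bounded; hence $\norm{x^k_{\erg}-x^*}\leq\nu_k\sum_{k'=0}^{k-1}\lambda_{k'}\norm{x^{k'+1}-x^*}=\mO(k\nu_k)=\mO(1/k)$, i.e.\ $\norm{x^k_{\erg}-x^*}^2=\mO(1/k^2)$. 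For the dual part, $y^k\weakto y^*$ together with the Stolz--Ces\`aro theorem applied to $\scp{y^k_{\erg}}{y}=\bigl(\sum_{k'=0}^{k-1}\lambda_{k'}\scp{y^{k'+1}}{y}\bigr)\big/\bigl(\sum_{k'=0}^{k-1}\lambda_{k'}\bigr)$ — the denominator being strictly increasing and divergent since $\lambda_{k'}\geq1$ — gives $\scp{y^k_{\erg}}{y}\to\scp{y^*}{y}$ for every $y\in Y$, hence $y^k_{\erg}\weakto y^*$, just as in the proof of Theorem~\ref{thm:douglas-rachford-ergodic-rate}. The only genuinely delicate point is step~(i): guaranteeing that~\eqref{eq:acc_gamma_restr} can be met with a \emph{positive} $\rho$ along the vanishing step‑size sequence, and then tracking the constants so the resulting bound matches~\eqref{eq:acc1_erg_convergence} verbatim; everything else is telescoping, convexity, and elementary weighted averaging.
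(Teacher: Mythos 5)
Your proof is correct and takes essentially the same route as the paper: choose $\rho>0$ small and $k_0$ large so that~\eqref{eq:acc_gamma_restr} holds with step-size $\sigma_k$ for $k\geq k_0$, weight the one-step estimate~\eqref{eq:acc1_lagrangian_est} by $\lambda_{k}$, telescope using $\lambda_{k+1}=\lambda_k/\vartheta_k$, apply Jensen's inequality, and pull the ergodic rates from Lemma~\ref{lem:acc1_num_seq} and Proposition~\ref{prop:acc1_weak_convergence}. The only cosmetic difference is that you justify boundedness of the supremum by noting that proper lsc convex functionals are bounded below on bounded sets, whereas the paper invokes~\eqref{eq:acc1_lagrangian_est0} for the finitely many indices $k'<k_0$; both are valid.
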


\begin{proof}
  As each $x^k_{\erg}$ is a convex combination, the convergence
  $\norm{x^k - x^*} = \mO(1/k)$ implies, together with the
  estimates of Lemma~\ref{lem:acc1_num_seq}, that
  \[
  \norm{x^k_{\erg} - x^*} \leq \nu_k \sum_{k'=0}^{k-1}
  \lambda_{k'} \norm{x^{k'+1} - x^*} \leq C \nu_k \sum_{k'=0}^{k-1}
  \frac{\lambda_{k'}}{k'+1} \leq
  C' \nu_k k = \mO(1/k)
  \]
  for appropriate $C, C' > 0$.
  The weak convergence of $\seq{y^k_{\erg}}$ follows again by the
  Stolz--Ces\`aro theorem: Indeed, for $y \in Y$, we have
  \[
  \lim_{k \to \infty} \scp{y^k_{\erg}}{y} = \lim_{k \to \infty}
  \frac{\sum_{k'=0}^{k-1} \lambda_{k'} \scp{y^{k'+1}}{y}}
  {\sum_{k'=0}^{k-1} \lambda_{k'}} = \lim_{k \to \infty}
  \frac{\lambda_k \scp{y^{k}}{y}}{\lambda_k} = \scp{y^*}{y}.
  \]
  For proving the estimate on the restricted primal-dual gap,
  first observe that as~\eqref{eq:accelerated-douglas-rachford_} requires
  $\gamma < 2\gamma_1/(1 + \sigma_0\tau_0\norm{\mK}^2)$ we also have
  $\gamma < 2\gamma_1/(1 + \rho + (1 + \rho)\sigma_0\tau_0\norm{\mK}^2)$
  for some
  $\rho > 0$ and we can choose again
  $k_0$ such that Proposition~\ref{prop:acc1_lagrangian_est}
  can be applied for
  $k \geq k_0$.
  For $(x,y) \in X_0 \times Y_0$, convexity of
  $(x'',y'') \mapsto \mL(x'', y) - \mL(x,y'')$ and
  estimate~\eqref{eq:acc1_lagrangian_est} yield
  \begin{align*}
    \rho \bigl( \mL&(x^{k}_{\erg},y)
    - \mL(x,y^{k}_{\erg}) \bigr)
      \leq \nu_k \sum_{k'=0}^{k-1} \lambda_{k'}
      \rho \bigl(\mL(x^{k'+1},y) - \mL(x,y^{k'+1}) \bigr)\\
    & \leq \nu_k \sum_{k'=0}^{k_0-1} \lambda_{k'}
      \rho \bigl(\mL(x^{k'+1},y) - \mL(x,y^{k'+1}) \bigr) \\
    & \quad + \nu_k \sum_{k'=k_0}^{k-1}
      \lambda_{k'} \Bigl[ \frac1{2\sigma_{k'}} \norm{\hat x^{k'} - x_\rho + \sigma_{k'} \mK^*y_\rho}^2
      + \frac1{2\tau_{k'}} \norm{\hat y^{k'} - y_\rho - \tau_{k'} \mK x_\rho}^2 \Bigr] \\
    & \qquad \qquad
      - \lambda_{k'+1} \Bigl[ \frac1{2\sigma_{k'+1}} \norm{\hat x^{k'+1} - x_\rho + \sigma_{k'+1} \mK^*y_\rho}^2
      + \frac1{2\tau_{k'+1}} \norm{\hat y^{k'+1} - y_\rho - \tau_{k'+1} \mK x_\rho}^2 \Bigr] \\
    &= \nu_k \sum_{k'=0}^{k_0-1} \lambda_{k'}
      \rho\bigl(\mL(x^{k'+1},y) - \mL(x,y^{k'+1})\bigr) \\
    &\quad  + \nu_k \lambda_{k_0}
      \Bigl[ \frac1{2\sigma_{k_0}} \norm{\hat x^{k_0} - x_\rho + \sigma_{k_0} \mK^*y_\rho}^2
      + \frac1{2\tau_{k_0}} \norm{\hat y^{k_0} - y_\rho - \tau_{k_0} \mK x_\rho}^2 \Bigr] \\
    & \quad
      - \nu_k\lambda_k \Bigl[ \frac1{2\sigma_{k}} \norm{\hat x^{k} - x_\rho + \sigma_{k} \mK^*y_\rho}^2
      + \frac1{2\tau_{k}} \norm{\hat y^{k} - y_\rho - \tau_{k} \mK x_\rho}^2 \Bigr].
  \end{align*}
  The estimate from above for the restricted primal-dual gap
  in~\eqref{eq:acc1_convergence_est} is thus valid.
        Moreover, the rate follows by Lemma~\ref{lem:acc1_num_seq} as
  $\nu_k = \mO(1/k^2)$ and the fact that the supremum on the
  right-hand side of~\eqref{eq:acc1_erg_convergence} is bounded.
  Indeed, the latter follows, from
  applying~\eqref{eq:acc1_lagrangian_est0} for $k' = 0,\ldots, k_0-1$
  as well as taking the boundedness of $X_0$ and $Y_0$ into account.
\end{proof}

\begin{corollary}
  The full dual error obeys
  \[
  \err^d(y_{\erg}^k) = \mO(1/k^2).
  \]
  If, moreover, $\mG$ is strongly coercive, then additionally,
  the primal error satisfies
  \[
  \err^p(x_{\erg}^k) = \mO(1/k^2).
  \]
\end{corollary}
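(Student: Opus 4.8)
The plan is to deduce both rates from the $\mO(1/k^2)$ bound on the restricted primal--dual gap $\gap_{X_0\times Y_0}(x_{\erg}^k,y_{\erg}^k)$ proved in the preceding theorem (estimate~\eqref{eq:acc1_erg_convergence}), combining it with Lemma~\ref{lem:full-gap-errors} to identify the \emph{full} error functionals with suitable \emph{restricted} ones along the ergodic sequences and with Proposition~\ref{prop:restricted-error-func} to bound restricted errors by restricted gaps. The only non-automatic ingredient is that in the present situation $\mF$ is strongly coercive: since $\mF$ is strongly convex with modulus $\gamma_1>0$, choosing $x_0\in\dom\subgrad\mF$ and $\xi_0\in\subgrad\mF(x_0)$, the strong-convexity subgradient inequality gives $\mF(x)\geq\mF(x_0)+\scp{\xi_0}{x-x_0}+\tfrac{\gamma_1}2\norm{x-x_0}^2$, whence $\mF(x)/\norm{x}\to\infty$ as $\norm{x}\to\infty$, so Lemma~\ref{lem:full-gap-errors}(1) is applicable.

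For the dual error, I would argue as follows. The sequence $\seq{y_{\erg}^k}$ converges weakly (to $y^*$) and is therefore bounded, so $Y_0:=\seq{y_{\erg}^k : k\geq1}\cup\sett{y^*}$ is a bounded subset of $\dom\mG$. Applying Lemma~\ref{lem:full-gap-errors}(1) to $Y_0$ yields a bounded $X_0\subset\dom\mF$, which may be taken to contain $x^*$, such that $\err^d=\err^d_{X_0}$ on $Y_0$. For every $k$, this identity together with Proposition~\ref{prop:restricted-error-func} gives
\[
\err^d(y_{\erg}^k)=\err^d_{X_0}(y_{\erg}^k)\leq\gap_{X_0\times\sett{y^*}}(x_{\erg}^k,y_{\erg}^k).
\]
Since $X_0\times\sett{y^*}$ is bounded and contains the saddle-point $(x^*,y^*)$ (which is a saddle-point by Proposition~\ref{prop:acc1_weak_convergence}), the preceding theorem applied to $X_0\times\sett{y^*}$ shows $\gap_{X_0\times\sett{y^*}}(x_{\erg}^k,y_{\erg}^k)=\mO(1/k^2)$, hence $\err^d(y_{\erg}^k)=\mO(1/k^2)$.

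For the primal error under the additional hypothesis that $\mG$ is strongly coercive, I would proceed symmetrically via Lemma~\ref{lem:full-gap-errors}(2). The sequence $\seq{x_{\erg}^k}$ converges strongly (at rate $\mO(1/k)$) and is thus bounded, so $X_0:=\seq{x_{\erg}^k : k\geq1}\cup\sett{x^*}$ is a bounded subset of $\dom\mF$; Lemma~\ref{lem:full-gap-errors}(2) then provides a bounded $Y_0\subset\dom\mG$, which may be taken to contain $y^*$, with $\err^p=\err^p_{Y_0}$ on $X_0$. Combining this with Proposition~\ref{prop:restricted-error-func} and the preceding theorem applied to the bounded set $\sett{x^*}\times Y_0$ containing $(x^*,y^*)$ yields
\[
\err^p(x_{\erg}^k)=\err^p_{Y_0}(x_{\erg}^k)\leq\gap_{\sett{x^*}\times Y_0}(x_{\erg}^k,y_{\erg}^k)=\mO(1/k^2).
\]
The only points requiring care are the bookkeeping ones---verifying that the auxiliary sets $X_0$, $Y_0$ are bounded subsets of the respective effective domains and contain a saddle-point, so that Lemma~\ref{lem:full-gap-errors}, Proposition~\ref{prop:restricted-error-func} and estimate~\eqref{eq:acc1_erg_convergence} all apply---and since no genuinely new estimate is needed, I do not anticipate a substantial obstacle beyond assembling these ingredients correctly.
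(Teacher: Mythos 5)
Your proof follows the same route as the paper's: deduce strong coercivity of $\mF$ from its strong convexity, invoke Lemma~\ref{lem:full-gap-errors} to pass from the full error functional to a restricted one on a bounded set, then use Proposition~\ref{prop:restricted-error-func} to dominate it by a restricted primal--dual gap to which the preceding ergodic theorem applies, and argue symmetrically for $\err^p$ under strong coercivity of $\mG$. The only difference is that you spell out the bookkeeping (verifying $\mF$ is strongly coercive, enlarging $X_0$ to contain $x^*$, and checking boundedness of the auxiliary sets) that the paper leaves implicit; the argument itself is identical.
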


\begin{proof}
  Employing Lemma~\ref{lem:full-gap-errors} on a bounded $Y_0 \subset
  \dom \mG$ containing $y_{\erg}^k$ for all $k$ and, subsequently,
  Proposition~\ref{prop:restricted-error-func}, it is sufficient for
  the estimate on $\err^d$ that
  $\mF$ is strongly coercive. This follows immediately from
  the strong convexity.   The analogous argument can be used in order to obtain the rate for
  $\err^p$, however, we have to assume strong
  coercivity of $\mG$.
        \end{proof}

In particular, we have an optimization scheme for the
dual problem with the optimal rate as well as weak convergence,
comparable to the modified FISTA method presented in \cite{CD}.
Moreover, in many situations, even the primal problem obeys the
optimal rate.

\begin{remark}
  If~\eqref{eq:acc_gamma_restr} is already satisfied for $\sigma_0$,
  $\tau_0$ and $\rho > 0$,
  which is possible for fixed $\sigma_0\tau_0$ by
  choosing $\sigma_0$ and $\rho > 0$
  small enough, then one may choose $k_0 = 0$
  such that
  the estimate on the restricted primal-dual gap reads as
  \begin{equation}
    \label{eq:acc1-dr-O1k2-convergence}
    \gap_{X_0 \times Y_0}(x_{\erg}^k, y_{\erg}^k) \leq \nu_k
    \sup_{(x,y) \in X_0 \times Y_0} \ \Bigl[
    \frac{\norm{\hat x^0 - x_\rho + \sigma_0 \mK^* y_\rho}^2}{2\rho\sigma_0} +
    \frac{\norm{\hat y^0 - y_\rho - \tau_0 \mK x_\rho}^2}{2\rho\tau_0}
    \Bigr],
  \end{equation}
  which has the same structure as~\eqref{eq:dr-O1k-convergence},
  the corresponding estimate for the basic Douglas--Rachford iteration.
      However, for small $\sigma_0\gamma$,
  see~\eqref{eq:acc1_num_seq_est},
  the smallest constant $C > 0$ such that $\nu_k \leq C/k^2$ for
  all $k \geq 1$ might become large.
  This potentially leads to~\eqref{eq:acc1-dr-O1k2-convergence} being
  worse than~\eqref{eq:acc1_erg_convergence} for a
  greater $\sigma_0$.
  Thus, choosing $\sigma_0$ sufficiently small
  such~\eqref{eq:acc1-dr-O1k2-convergence} holds does not offer a clear
  advantage.
\end{remark}

\subsection{\texorpdfstring{Strong convexity of $\mF$ and $\mG$}{Strong convexity of F and G}}

Assume in the following that both $\mF$ and $\mG$ are strongly convex, i.e.,
$\gamma_1 > 0$ and $\gamma_2 > 0$. We obtain an analogous identity
to~\eqref{eq:acc_quadratic_terms}, however, with different $\vartheta$
and without changing the step-sizes $\sigma$ and $\tau$.

\begin{lemma}
  \label{lem:acc2_sqr_norm_repr}
  Let, in the situation of Lemma~\ref{lem:acc_lagrangian_est},
  the values $\gamma >0$ and $\gamma' > 0$ satisfy
  $\sigma\gamma = \tau\gamma'$.
  Then,
  \begin{align}
    \label{eq:acc2_quadratic_terms}
    \notag
    \frac1{2\sigma} \|\bar x^{k+1} - x_\rho &+ \sigma \mK^*y_\rho\|^2
    + \frac\gamma2 \norm{x^{k+1} - x_\rho}^2
      + \frac1{2\tau} \norm{\bar y^{k+1} - y_\rho - \tau \mK x_\rho}^2
      + \frac{\gamma'}2 \norm{y^{k+1} - y_\rho}^2 \\
    \notag
    & =
      \frac1{\vartheta} \Bigl[
      \frac1{2\sigma} \norm{\bar x^{k+1} - x_\rho + \sigma \mK^*y_\rho}^2
      + \frac1{2\tau} \norm{\bar y^{k+1} - y_\rho - \tau \mK x_\rho}^2
      \Bigr] \\
    \notag
    &  \quad
      + \frac{1 - \vartheta}{\vartheta}
      \Bigl[
      \frac{1}{\tau} \scp{y^{k+1} - y_\rho}{\tau \mK x_\rho + \tilde y^{k+1}}
      - \frac1\sigma \scp{x^{k+1} - x_\rho}{\sigma \mK^*y_\rho - \tilde x^{k+1}}
      \Bigr] \\
    & \quad
      - \frac{\gamma}{2}  \norm{\sigma \mK^* y_\rho - \tilde x^{k+1}}^2
      - \frac{\gamma'}{2}  \norm{\tau \mK x_\rho + \tilde y^{k+1}}^2
  \end{align}
  for
  \[
  \vartheta  = \frac{1}{1 + \sigma\gamma}
  = \frac{1}{1 + \tau \gamma'}.
  \]
\end{lemma}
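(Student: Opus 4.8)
The plan is to proceed exactly as in the proof of Lemma~\ref{lem:acc_sqr_norm_repr}: the asserted formula is a purely algebraic identity, so the whole argument consists of completing squares and matching terms, with no genuine estimate involved. The one structural difference to the previous lemma is that now the step-sizes are \emph{not} changed, so instead of collecting the rearranged terms into new squared norms based on $\sigma'$, $\tau'$ and $\hat x^{k+1}$, $\hat y^{k+1}$, I would simply factor $\tfrac1\vartheta = 1+\sigma\gamma$ out of the existing squared norms and keep track of the resulting error terms.

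First I would treat the primal block. Using the running convention $\tilde x^{k+1} = x^{k+1} - \bar x^{k+1}$ one has $\bar x^{k+1} - x_\rho + \sigma\mK^*y_\rho = (x^{k+1} - x_\rho) + (\sigma\mK^*y_\rho - \tilde x^{k+1})$, so applying the polarization identity $\norm{u}^2 - \norm{u+v}^2 = -2\scp{u}{v} - \norm{v}^2$ with $u = x^{k+1}-x_\rho$ and $v = \sigma\mK^*y_\rho - \tilde x^{k+1}$ yields
\begin{align*}
  \frac1{2\sigma}\norm{\bar x^{k+1} - x_\rho + \sigma\mK^*y_\rho}^2 + \frac\gamma2\norm{x^{k+1}-x_\rho}^2
  &= \frac{1+\sigma\gamma}{2\sigma}\norm{\bar x^{k+1} - x_\rho + \sigma\mK^*y_\rho}^2 \\
  & \quad - \gamma\scp{x^{k+1}-x_\rho}{\sigma\mK^*y_\rho - \tilde x^{k+1}}
  - \frac\gamma2\norm{\sigma\mK^*y_\rho - \tilde x^{k+1}}^2.
\end{align*}
The dual block is handled identically, this time with $\tilde y^{k+1} = y^{k+1} - \bar y^{k+1}$, so that $\bar y^{k+1} - y_\rho - \tau\mK x_\rho = (y^{k+1}-y_\rho) - (\tau\mK x_\rho + \tilde y^{k+1})$ and hence, with the sign of the cross term reversed because the coupling term now appears with a minus,
\begin{align*}
  \frac1{2\tau}\norm{\bar y^{k+1} - y_\rho - \tau\mK x_\rho}^2 + \frac{\gamma'}2\norm{y^{k+1}-y_\rho}^2
  &= \frac{1+\tau\gamma'}{2\tau}\norm{\bar y^{k+1} - y_\rho - \tau\mK x_\rho}^2 \\
  & \quad + \gamma'\scp{y^{k+1}-y_\rho}{\tau\mK x_\rho + \tilde y^{k+1}}
  - \frac{\gamma'}2\norm{\tau\mK x_\rho + \tilde y^{k+1}}^2.
\end{align*}

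To finish, I would invoke the definition $\vartheta = (1+\sigma\gamma)^{-1} = (1+\tau\gamma')^{-1}$, which is consistent precisely because of the assumption $\sigma\gamma = \tau\gamma'$, and record the elementary identities $\tfrac1\vartheta = 1+\sigma\gamma$ and $\tfrac{1-\vartheta}\vartheta = \tfrac1\vartheta - 1 = \sigma\gamma = \tau\gamma'$, so in particular $\gamma = \tfrac{1-\vartheta}\vartheta\cdot\tfrac1\sigma$ and $\gamma' = \tfrac{1-\vartheta}\vartheta\cdot\tfrac1\tau$. Substituting these coefficient identities into the two displays above and adding them reproduces the right-hand side of~\eqref{eq:acc2_quadratic_terms} term by term: the two leading squared-norm terms combine into $\tfrac1\vartheta[\,\cdots\,]$, the two cross terms combine into the indicated bracket multiplied by $\tfrac{1-\vartheta}\vartheta$, and the two negative squared terms already appear in the claimed form. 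There is essentially no obstacle here beyond bookkeeping; the only point that needs a moment of care is keeping the signs consistent in the two polarization identities — the primal cross term enters with a minus and the dual one with a plus, because $\bar x^{k+1}$ sits at $u+v$ while $\bar y^{k+1}$ sits at $p-q$ — together with checking that the coefficient identities for $\gamma$ and $\gamma'$ are exactly what the right-hand side demands. I expect the whole proof to take only a few lines, mirroring the completeness-sake computation already given for Lemma~\ref{lem:acc_sqr_norm_repr}.
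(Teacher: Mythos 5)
Your proof is correct and follows essentially the same route as the paper: the paper simply records the two block identities (one for the primal, one for the dual, with the sign of the cross term flipped), adds them, and substitutes $\tfrac1\vartheta = 1+\sigma\gamma = 1+\tau\gamma'$ and $\tfrac{1-\vartheta}\vartheta = \sigma\gamma = \tau\gamma'$ exactly as you do. Your derivation via expanding the square with $u = x^{k+1}-x_\rho$, $v = \sigma\mK^*y_\rho - \tilde x^{k+1}$ and $p = y^{k+1}-y_\rho$, $q = \tau\mK x_\rho + \tilde y^{k+1}$ is precisely the computation the paper leaves implicit behind the phrase ``follows immediately from the identities''.
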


\begin{proof}
  This follows immediately from the identities
  \begin{align*}
    \frac{1}{2\sigma} \|\bar x^{k+1} &- x_\rho
    + \sigma \mK^* y_\rho\|^2
      + \frac{\gamma}2 \norm{x^{k+1} - x_\rho}^2 \\
    &= \frac{1 + \sigma\gamma}{2\sigma} \norm{\bar x^{k+1} - x_\rho
      + \sigma \mK^* y_\rho}^2
      - \gamma \scp{x^{k+1} - x_\rho}{\sigma \mK^* y_\rho - \tilde x^{k+1}}
      - \frac{\gamma}2 \norm{\sigma \mK^* y_\rho - \tilde x^{k+1}}^2
  \end{align*}
  as well as
  \begin{align*}
    \frac{1}{2\tau} \|&\bar y^{k+1}
     - y_\rho
      - \tau \mK x_\rho \|^2
      + \frac{\gamma'}2 \norm{y^{k+1} - y_\rho}^2 \\
    &= \frac{1 + \tau\gamma'}{2\tau} \norm{\bar y^{k+1} - y_\rho - \tau \mK x_\rho}^2
      + \gamma' \scp{y^{k+1} - y_\rho}{\tau \mK x_\rho + \tilde y^{k+1}}
      - \frac{\gamma'}2 \norm{\tau \mK x_\rho + \tilde y^{k+1}}^2.
      \qedhere
  \end{align*}
\end{proof}

\begin{proposition}
  \label{prop:acc2_lagrangian_est}
  In the situation of Lemma~\ref{lem:acc2_sqr_norm_repr},
  with $\vartheta_p = \vartheta_d = \vartheta$,
  $\rho \in [0,1]$ and
  $\gamma > 0$, $\gamma' > 0$ chosen such that
  \begin{equation}
    \label{eq:acc2_gamma_restr}
    \gamma \leq \frac{2\gamma_1}{1 +
      \rho + (1 + \rho + 2\sigma\gamma_1) \sigma\tau \norm{\mK}^2},
    \qquad
    \gamma' = \frac{\gamma\gamma_2}{\gamma_1}
      \end{equation}
  there is a $0 < c < 1$ and a $c' > 0$ such that
  the following estimate holds:
  \begin{align}
    \label{eq:acc2_lagrangian_est}
    \notag
    \rho \bigl( \mL(x^{k+1}&, y)
    - \mL(x, y^{k+1}) \bigr)
      + \frac1\vartheta \Bigl[
      \frac1{2\sigma} \norm{\bar x^{k+1} - x_\rho + \sigma \mK^*y_\rho}^2
      + \frac1{2\tau} \norm{\bar y^{k+1} - y_\rho - \tau \mK x_\rho}^2
      \Bigr]
    \\
    \notag
    & \quad + (1-c) \Bigl[
      \frac1{2\sigma} \norm{\hat x^k - \bar x^{k+1}}^2 +
      \frac1{2\tau} \norm{\hat y^k - \bar y^{k+1}}^2 \Bigr]
      + \frac{c'}{2} \norm{x^{k+1} - x_\rho}^2
      + \frac{c'}{2}\norm{y^{k+1} - y_\rho}^2
    \\
    & \leq \frac1{2\sigma}
      \norm{\hat x^k - x_\rho + \sigma \mK^*y_\rho}^2 + \frac1{2\tau}
      \norm{\hat y^k - y_\rho - \tau \mK x_\rho}^2.
  \end{align}
\end{proposition}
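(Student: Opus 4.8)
The plan is to follow the same scheme as in the proof of Proposition~\ref{prop:acc1_lagrangian_est}: combine the Lagrangian estimate~\eqref{eq:acc_lagrangian_est} of Lemma~\ref{lem:acc_lagrangian_est}, specialised to the symmetric choice $\vartheta_p = \vartheta_d = \vartheta$, with the quadratic-term identity~\eqref{eq:acc2_quadratic_terms} of Lemma~\ref{lem:acc2_sqr_norm_repr}, and then absorb the resulting error terms into a non-negative remainder. First note that~\eqref{eq:acc2_gamma_restr} together with $\gamma' = \gamma\gamma_2/\gamma_1$ forces $\gamma \leq \tfrac{2\gamma_1}{1+\rho}$, hence $\tfrac\gamma2 \leq \tfrac{\gamma_1}{1+\rho}$ and $\tfrac{\gamma'}{2} \leq \tfrac{\gamma_2}{1+\rho}$. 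This allows one to split the strong-convexity budgets in~\eqref{eq:acc_lagrangian_est} as $\tfrac{\gamma_1}{1+\rho}\norm{x^{k+1}-x_\rho}^2 = \tfrac\gamma2\norm{x^{k+1}-x_\rho}^2 + (\tfrac{\gamma_1}{1+\rho}-\tfrac\gamma2)\norm{x^{k+1}-x_\rho}^2$ and analogously for $y^{k+1}-y_\rho$ with $\gamma'$, and then to replace the block $\tfrac1{2\sigma}\norm{\bar x^{k+1}-x_\rho+\sigma\mK^*y_\rho}^2 + \tfrac\gamma2\norm{x^{k+1}-x_\rho}^2 + \tfrac1{2\tau}\norm{\bar y^{k+1}-y_\rho-\tau\mK x_\rho}^2 + \tfrac{\gamma'}{2}\norm{y^{k+1}-y_\rho}^2$ by the right-hand side of~\eqref{eq:acc2_quadratic_terms}.

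The decisive observation is that, after this substitution, all scalar-product terms cancel. One rewrites $\tfrac1\tau\scp{y^{k+1}-y_\rho}{\tau\mK x_\rho+\tilde y^{k+1}} - \tfrac1\sigma\scp{x^{k+1}-x_\rho}{\sigma\mK^*y_\rho-\tilde x^{k+1}}$ by means of~\eqref{update:x:y:k+1} as $\scp{\hat y^k-\bar y^{k+1}}{\mK[\vartheta_p(x_\rho-x^{k+1})+\bar x^{k+1}-\hat x^k]} - \scp{-\vartheta_d(y^{k+1}-y_\rho)+\bar y^{k+1}-\hat y^k}{\mK[\hat x^k-\bar x^{k+1}]}$ --- exactly as at the beginning of the proof of Lemma~\ref{lem:acc_error_term_est}, this rewriting being valid for any $\vartheta_p,\vartheta_d$ entering~\eqref{update:x:y:k+1} --- adds it with weight $\tfrac{1-\vartheta}{\vartheta}$ to the coupling terms $(1-\vartheta_p)\scp{\hat y^k-\bar y^{k+1}}{\mK[x^{k+1}+\bar x^{k+1}-\hat x^k-x_\rho]}$ and $-(1-\vartheta_d)\scp{y^{k+1}+\bar y^{k+1}-\hat y^k-y_\rho}{\mK[\hat x^k-\bar x^{k+1}]}$ already present in~\eqref{eq:acc_lagrangian_est}, and collects the coefficients. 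With $\vartheta_p = \vartheta_d = \vartheta$, the contributions proportional to $x^{k+1}-x_\rho$ and to $y^{k+1}-y_\rho$ drop out, and the remaining $\tfrac{1-\vartheta^2}{\vartheta}\scp{\hat y^k-\bar y^{k+1}}{\mK[\bar x^{k+1}-\hat x^k]} - \tfrac{1-\vartheta^2}{\vartheta}\scp{\bar y^{k+1}-\hat y^k}{\mK[\hat x^k-\bar x^{k+1}]}$ vanishes because $\scp{\bar y^{k+1}-\hat y^k}{\mK[\hat x^k-\bar x^{k+1}]} = \scp{\hat y^k-\bar y^{k+1}}{\mK[\bar x^{k+1}-\hat x^k]}$. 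This cancellation is precisely where the symmetric choice $\vartheta_p = \vartheta_d$, as opposed to $\vartheta_p = \vartheta^{-1}$ in Lemma~\ref{lem:acc_error_term_est}, is used. Consequently the only genuine error terms surviving on the left-hand side are the two negative quadratic terms $-\tfrac\gamma2\norm{\sigma\mK^*y_\rho-\tilde x^{k+1}}^2$ and $-\tfrac{\gamma'}{2}\norm{\tau\mK x_\rho+\tilde y^{k+1}}^2$ produced by~\eqref{eq:acc2_quadratic_terms}.

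It then remains to dominate these by the leftover budgets $(\tfrac{\gamma_1}{1+\rho}-\tfrac\gamma2)\norm{x^{k+1}-x_\rho}^2$, $(\tfrac{\gamma_2}{1+\rho}-\tfrac{\gamma'}{2})\norm{y^{k+1}-y_\rho}^2$ and by the residuals $\tfrac1{2\sigma}\norm{\hat x^k-\bar x^{k+1}}^2$, $\tfrac1{2\tau}\norm{\hat y^k-\bar y^{k+1}}^2$, up to the desired remainder $\tfrac{c'}{2}\norm{x^{k+1}-x_\rho}^2 + \tfrac{c'}{2}\norm{y^{k+1}-y_\rho}^2$ and $(1-c)$-fractions of the last two. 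To this end I would use~\eqref{update:x:y:k+1} to write $\sigma\mK^*y_\rho-\tilde x^{k+1} = \sigma\mK^*[(y_\rho-y^{k+1})+\vartheta(\hat y^k-\bar y^{k+1})]$ and $\tau\mK x_\rho+\tilde y^{k+1} = \tau\mK[(x_\rho-x^{k+1})+\vartheta(\hat x^k-\bar x^{k+1})]$, and then apply $\norm{\mK^*} = \norm{\mK}$ together with Young's inequality with a free parameter $\varepsilon > 0$; the relation $\sigma\gamma = \tau\gamma'$ makes the resulting coefficients symmetric and expressible through $q = \sigma\tau\norm{\mK}^2$. As in Proposition~\ref{prop:acc1_lagrangian_est}, the proportionality $\gamma' = \gamma\gamma_2/\gamma_1$ is exactly what makes the primal and dual positivity conditions collapse to the single requirement $\gamma(1+q(1+\varepsilon)) < \tfrac{2\gamma_1}{1+\rho}$, while the $\norm{\hat x^k-\bar x^{k+1}}^2$, $\norm{\hat y^k-\bar y^{k+1}}^2$ contributions force $c \geq \sigma\gamma q\,(1+\varepsilon^{-1})\vartheta^2$. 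The main obstacle --- completely analogous to the constant bookkeeping at the end of the proof of Proposition~\ref{prop:acc1_lagrangian_est} --- is to verify that~\eqref{eq:acc2_gamma_restr}, which is equivalent to $\gamma(1+\rho)(1+q) \leq 2\gamma_1(1-\sigma\gamma q)$ and hence implies $\sigma\gamma q < 1$, also leaves the admissible interval for $\varepsilon$ nonempty, so that $\varepsilon > 0$ small and $c < 1$ can be chosen simultaneously; then $c' := \min(1,\gamma_2/\gamma_1)\bigl(\tfrac{\gamma_1}{1+\rho}-\tfrac\gamma2-\tfrac{\gamma q(1+\varepsilon)}{2}\bigr) > 0$ works, and collecting all estimates yields~\eqref{eq:acc2_lagrangian_est}.
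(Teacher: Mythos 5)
Your proposal follows the paper's proof essentially step by step: combine~\eqref{eq:acc_lagrangian_est} with~\eqref{eq:acc2_quadratic_terms}, observe that the symmetric choice $\vartheta_p=\vartheta_d=\vartheta$ makes all coupling scalar products cancel identically, then absorb the two remaining negative squared-norm terms by Young's inequality into the residual and strong-convexity budgets. The cancellation argument is correct (you reuse the algebraic rewriting from Lemma~\ref{lem:acc_error_term_est}, while the paper re-derives the equivalent identity directly), and the observation that $\sigma\gamma=\tau\gamma'$ collapses the primal and dual positivity requirements to one is exactly the paper's point. The only place you stop short is the final constant check: you acknowledge that one must verify that the admissible interval for your free Young parameter $\varepsilon$ is nonempty under~\eqref{eq:acc2_gamma_restr}, but do not carry it out. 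This is a bit more delicate than the remark suggests, because the lower bound on $\varepsilon$ coming from $c<1$ and the upper bound coming from the budget must be compatible; the paper sidesteps it by parametrizing in $c$ directly, requiring $c>\max(\sigma\gamma\,\sigma\tau\norm{\mK}^2,\,(1+\sigma\gamma)^{-2})$, and using monotonicity of the resulting coefficient in $c$ to let $c\to1$, where the comparison with~\eqref{eq:acc2_gamma_restr} becomes transparent. Your route can be made to work, but the paper's $c$-first parametrization avoids this extra bookkeeping; the factor of $2$ missing in your displayed $c'$ is immaterial.
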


\begin{proof}
  Plugging in the definitions~\eqref{update:x:y:k+1}, the
  scalar-product terms in~\eqref{eq:acc2_quadratic_terms} can
  be written as
  \begin{multline*}
    \frac{1-\vartheta}{\vartheta} \Bigl[ \frac1{\tau} \scp{y^{k+1} - y_\rho}{\tau \mK x_\rho + \tilde y^{k+1}}
    - \frac1\sigma \scp{x^{k+1} - x_\rho}{\sigma \mK^* y_\rho - \tilde x^{k+1}} \Bigr] \\
    = (1 - \vartheta) \bigl[ \scp{\mK[x^{k+1} - x_\rho]}{\bar y^{k+1} - \hat y^k}
    -  \scp{y^{k+1} - y_\rho}{\mK[\bar x^{k+1} - \hat x^k]} \bigr].
  \end{multline*}
  Adding the scalar-product terms in \eqref{eq:acc_lagrangian_est},
  we arrive at
  \begin{align*}
    (1 - \vartheta)
    &\scp{\hat y^k - \bar y^{k+1}}{\mK[x^{k+1} + \bar x^{k+1}
      - \hat x^k - x_\rho]} \\
    & \quad - (1-\vartheta) \scp{y^{k+1} + \bar y^{k+1} - \hat y^k - y_\rho}
      {\mK[\hat x^k - \bar x^{k+1}]} \\
    & \quad + (1- \vartheta) \scp{\mK[x^{k+1} - x_\rho]}{\bar y^{k+1} - \hat y^k}
      - (1 - \vartheta) \scp{y^{k+1} - y_\rho}{\mK[\bar x^{k+1} - \hat x^k]} \\
    &= (1 - \vartheta)\scp{\hat y^k - \bar y^{k+1}}{\mK[\bar x^{k+1} - \hat x^k]}
      - (1 - \vartheta)\scp{\bar y^{k+1} - \hat y^k}
      {\mK[\hat x^k - \bar x^{k+1}]} \\
    &= 0,
  \end{align*}
  so one only needs to estimate the squared-norm terms on the right-hand
  side of~\eqref{eq:acc2_quadratic_terms}. Doing so, employing
  the restriction on $\gamma$ yields $\sigma\gamma\sigma\tau \norm{\mK}^2 < 1$,
  so choosing $c$ according to
  \[
  \max \Bigl( \sigma\gamma\sigma\tau \norm{\mK}^2,
  \frac{1}{(1 + \sigma\gamma)^2} \Bigr) < c < 1
  \]
  allows to obtain
  \begin{align*}
    \frac\gamma2 \norm{\sigma \mK^* y_\rho - \tilde x^{k+1}}^2
    &= \frac{\sigma^2\gamma}2\norm{\mK^*[y^{k+1} - y_\rho + \vartheta(\bar y^{k+1}
      - \hat y^k)]}^2 \\
    &\leq \frac{c}{2\tau} \norm{\bar y^{k+1} - \hat y^k}^2
      + \frac{c(1 + \sigma\gamma)^2\sigma^2\gamma \norm{\mK}^2}
      {2(c(1 + \sigma\gamma)^2 - \sigma^2\tau \gamma \norm{\mK}^2)}
      \norm{y^{k+1} - y_\rho}^2,
  \end{align*}
  as well as
  \[
  \frac{\gamma'}{2} \norm{\tau \mK x_\rho + \tilde y^{k+1}}^2
  \leq \frac{c}{2\sigma} \norm{\bar x^{k+1} - \hat x^k}^2 +
  \frac{c(1 + \tau\gamma')^2\tau^2\gamma' \norm{\mK}^2}
  {2(c(1 + \tau\gamma')^2 - \sigma\tau^2 \gamma' \norm{\mK}^2)}
  \norm{x^{k+1} - x_\rho}^2.
  \]
  As before, we would like to have that the factors in front of
  $\norm{x^{k+1} - x_\rho}^2$ and $\norm{y^{k+1} - y_\rho}^2$ are strictly below
  $\frac{\gamma_1}{1+\rho} - \frac{\gamma}{2}$ and
  $\frac{\gamma_2}{1+\rho} - \frac{\gamma'}{2}$,
  respectively.
  Indeed, with the restriction on $\gamma$, $\gamma_2 = \tfrac{\sigma}{\tau}
  \gamma_1$, $\gamma' = \tfrac{\sigma}{\tau} \gamma$, strict
  monotonicity as well as
  $c(1+\sigma\gamma)^2 > 1$, one sees that
  \[
  \frac{\sigma^2\gamma\norm{\mK}^2}{1 - \sigma\gamma\sigma\tau \norm{\mK}^2}
  \leq \frac2{1+\rho}\gamma_2 - \gamma'
  \quad
  \text{and}
  \quad
  \frac{c(1 + \sigma\gamma)^2\sigma^2\gamma \norm{\mK}^2}
  {2(c(1 + \sigma\gamma)^2 - \sigma^2\tau \gamma \norm{\mK}^2)}
  \leq \frac{\gamma_2}{1 + \rho} - \frac{\gamma'}{2} - \frac{c'_1}{2}
  \]
  for a $c'_1 > 0$. Analogously, again with
  $\gamma' = \tfrac{\sigma}{\tau} \gamma$, it also follows that
  \[
  \frac{\tau^2\gamma'\norm{\mK}^2}{1 - \tau\gamma'\sigma\tau \norm{\mK}^2}
  \leq \frac{2}{1 + \rho}\gamma_1 - \gamma
  \quad
  \text{and}
  \quad
  \frac{c(1 + \tau\gamma')^2\tau^2\gamma' \norm{\mK}^2}
  {2(c(1 + \tau\gamma')^2 - \sigma \tau^2 \gamma' \norm{\mK}^2)}
  \leq \frac{\gamma_1}{1 + \rho} - \frac{\gamma}{2} - \frac{c'_2}{2}
  \]
  for a $c_2' > 0$. Choosing $c' = \min(c_1',c_2')$ and
  combining~\eqref{eq:acc_lagrangian_est},~\eqref{eq:acc2_quadratic_terms}
  with the above estimates thus yields~\eqref{eq:acc2_lagrangian_est}.
\end{proof}

Letting $\hat x^{k+1} = \bar x^{k+1}$ and $\hat y^{k+1} = \bar y^{k+1}$,
we arrive at the \emph{accelerated Douglas--Rachford method for
  strongly convex-concave saddle-point problems}
\begin{equation}
  \label{eq:acc-douglas-rachford-sc}
  \left\{
    \begin{aligned}
      x^{k+1} &= (\id + \sigma\subgrad \mF)^{-1}(\bar x^k), \\
      y^{k+1} &= (\id + \tau\subgrad \mG)^{-1}(\bar y^k), \\
      \bar x^{k+1} &= x^{k+1} - \sigma \mK^*[y^{k+1} + \vartheta(\bar y^{k+1} - \bar y^k)], \\
      \bar y^{k+1} &= y^{k+1} + \tau \mK[x^{k+1} + \vartheta(\bar x^{k+1} - \bar x^k)],
    \end{aligned}
  \right.
\end{equation}
which, together with the restrictions $\sigma\gamma = \tau\gamma'$
and~\eqref{eq:acc2_gamma_restr},
is also summarized in Table~\ref{tab:acc-douglas-rachford-sc}.
\begin{table}
  \centering
  \begin{tabular}{p{0.15\linewidth}p{0.75\linewidth}}
    \toprule

    \multicolumn{2}{l}{\textbf{aDR$^{sc}$}\ \ \ \ \textbf{Objective:}
    \hfill    Solve  \ \ $\min_{x\in \dom \mF} \max_{y \in  \dom \mG}
    \ \langle \mK x,y\rangle + \mF(x) - \mG(y)$
    \hfill\mbox{} }
    \\
    \midrule

    Prerequisites:
 &
   $\mF, \mG$ are strongly convex with respective moduli $\gamma_1,\gamma_2
   > 0$
    \\[\smallskipamount]
    Initialization:
 &
   $(\bar x^0, \bar y^0) \in X \times Y$
   initial guess, $\sigma, \tau > 0$ step sizes with $\sigma\gamma_1
   = \tau\gamma_2$, \\
 &
   $\gamma \leq
   \frac{2\gamma_1}{1 + (1 + 2\sigma\gamma_1)\sigma\tau\norm{\mK}^2}$
   acceleration factor,
   $\vartheta = \frac1{1 + \sigma\gamma}$
    \\[\medskipamount]

    Iteration:
 &
   \raisebox{0.5em}[0mm][7.5\baselineskip]{\begin{minipage}[t]{1.0\linewidth}
       {       \renewcommand{\theHequation}{adrsc}       \begin{equation}\label{eq:acc-douglas-rachford-sc_}\tag{aDR$^{sc}$}
         \left\{
           \begin{aligned}
             x^{k+1} &= (\id + \sigma \subgrad \mF)^{-1}(\bar x^k) \\
             y^{k+1} &= (\id + \tau \subgrad \mG)^{-1}(\bar y^k) \\
             b^{k+1} &=
             (\tfrac{1 + \vartheta}{\vartheta} x^{k+1} - \bar x^k)
             - \vartheta \sigma
             \mK^*(\tfrac{1 + \vartheta}{\vartheta}y^{k+1} - \bar y^k)
             \\
             d^{k+1} &= (\id + \vartheta^2\sigma \tau \mK^*\mK)^{-1}b^{k+1} \\
             \bar x^{k+1} &= \bar x^k - \vartheta^{-1} x^{k+1} + d^{k+1} \\
             \bar y^{k+1} &= y^{k+1} + \vartheta \tau \mK d^{k+1}
           \end{aligned}
         \right.
       \end{equation}}
     \end{minipage}}
    \\
    \vspace*{-\smallskipamount}
    Output:
 &
   \vspace*{-\smallskipamount}
   $\seq{(x^k,y^k)}$  primal-dual sequence  \\
    \bottomrule
  \end{tabular}
  \caption{The accelerated
    Douglas--Rachford iteration for the solution of
    strongly convex-concave saddle-point problems of
    type~\eqref{eq:saddle-point-prob}.}
  \label{tab:acc-douglas-rachford-sc}
\end{table}
As we will see in the following,
the estimate~\eqref{eq:acc2_lagrangian_est} implies R-linear
convergence of the sequences $\seq{(x^k,y^k)}$ and
$\seq{(\bar x^k, \bar y^k)}$, i.e., an exponential rate.

\begin{theorem}
  The iteration~\eqref{eq:acc-douglas-rachford-sc_} converges
  R-linearly to the unique
  saddle-point $(x^*,y^*)$ in the following sense:
  \[
  \lim_{k \to \infty} (x^k,y^k) = (x^*,y^*), \qquad
  \norm{x^k - x^*}^2 = \mo(\vartheta^{k}), \qquad
  \norm{y^k - y^*}^2 = \mo(\vartheta^{k}).
  \]
  Furthermore, denoting $\bar x^* = x^* - \sigma \mK^* y^*$ and
  $\bar y^* = y^* + \tau \mK x^*$, we have
  \[
  \lim_{k \to \infty} (\bar x^k, \bar y^k) = (\bar x^*, \bar y^*), \qquad
  \norm{\bar x^{k} - \bar x^*}^2 = \mo(\vartheta^k),
  \qquad
  \norm{\bar y^{k} - \bar y^*}^2 = \mo(\vartheta^k).
  \]
\end{theorem}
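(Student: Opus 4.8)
The plan is to read the one-step estimate~\eqref{eq:acc2_lagrangian_est} of Proposition~\ref{prop:acc2_lagrangian_est} as a geometrically contracting Lyapunov inequality. First I would record that, since $\mF$ and $\mG$ are both strongly convex, the operator $B$ in~\eqref{eq:saddle-point-data} is strongly monotone, so that $A + B$ is strongly monotone and, as $A$ has full domain, maximally monotone; hence $0 \in Az + Bz$ has a unique zero, which furnishes the unique saddle-point $(x^*,y^*)$ of~\eqref{eq:saddle-point-prob}. With $\bar x^* = x^* - \sigma\mK^* y^*$ and $\bar y^* = y^* + \tau\mK x^*$ as in the statement, note that $\bar x^k - x^* + \sigma\mK^* y^* = \bar x^k - \bar x^*$ and $\bar y^k - y^* - \tau\mK x^* = \bar y^k - \bar y^*$, so that the quadratic quantities appearing in~\eqref{eq:acc2_lagrangian_est} are exactly squared distances to $(\bar x^*, \bar y^*)$.

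Next I would apply Proposition~\ref{prop:acc2_lagrangian_est} with $(x',y') = (x^*,y^*)$, with $\rho = 0$ (so that $x_\rho = x^*$, $y_\rho = y^*$ and the Lagrangian difference drops out entirely), and with $\hat x^{k+1} = \bar x^{k+1}$, $\hat y^{k+1} = \bar y^{k+1}$ as in~\eqref{eq:acc-douglas-rachford-sc}; the restriction~\eqref{eq:acc2_gamma_restr} specialized to $\rho = 0$ is precisely the admissibility condition on $\gamma$ (together with $\gamma' = \gamma\gamma_2/\gamma_1$ and $\sigma\gamma_1 = \tau\gamma_2$, whence $\sigma\gamma = \tau\gamma'$) imposed in Table~\ref{tab:acc-douglas-rachford-sc}. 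Writing $D_k = \frac1{2\sigma}\norm{\bar x^k - \bar x^*}^2 + \frac1{2\tau}\norm{\bar y^k - \bar y^*}^2$, the estimate becomes $\tfrac1\vartheta D_{k+1} + S_{k+1} \leq D_k$, where $S_{k+1} \geq 0$ collects the remaining non-negative left-hand side terms, i.e.\ $(1-c)\bigl[\tfrac1{2\sigma}\norm{\bar x^k - \bar x^{k+1}}^2 + \tfrac1{2\tau}\norm{\bar y^k - \bar y^{k+1}}^2\bigr] + \tfrac{c'}2\norm{x^{k+1}-x^*}^2 + \tfrac{c'}2\norm{y^{k+1}-y^*}^2$. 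Since $0 < \vartheta < 1$, this already gives $D_{k+1} \leq \vartheta D_k$, hence $D_k \leq \vartheta^k D_0$, yielding the geometric $\mO(\vartheta^k)$ bounds on $\norm{\bar x^k - \bar x^*}^2$ and $\norm{\bar y^k - \bar y^*}^2$.

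To sharpen this to the claimed $\mo(\vartheta^k)$ rates I would multiply the inequality by $\vartheta^{-k}$: then $\vartheta^{-k}D_k$ is non-increasing and telescoping yields $\sum_{k\geq 0}\vartheta^{-(k+1)}S_{k+1} \leq D_0 < \infty$, so in particular $\vartheta^{-k}S_{k+1} \to 0$. As $S_{k+1}$ dominates $\tfrac{c'}2\bigl(\norm{x^{k+1}-x^*}^2 + \norm{y^{k+1}-y^*}^2\bigr)$, a shift of index gives $\norm{x^k - x^*}^2 = \mo(\vartheta^k)$ and $\norm{y^k - y^*}^2 = \mo(\vartheta^k)$, hence $x^k \to x^*$ and $y^k \to y^*$; and since $S_{k+1}$ also dominates the increments, $\norm{\bar x^k - \bar x^{k+1}}^2 = \mo(\vartheta^k)$ and $\norm{\bar y^k - \bar y^{k+1}}^2 = \mo(\vartheta^k)$. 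Finally I would rewrite the update equations of~\eqref{eq:acc-douglas-rachford-sc} as $\bar x^{k+1} - \bar x^* = (x^{k+1}-x^*) - \sigma\mK^*\bigl[(y^{k+1}-y^*) + \vartheta(\bar y^{k+1}-\bar y^k)\bigr]$ and symmetrically for $\bar y^{k+1}-\bar y^*$, so that the triangle inequality together with the rates just obtained forces $\norm{\bar x^k - \bar x^*}^2 = \mo(\vartheta^k)$ and $\norm{\bar y^k - \bar y^*}^2 = \mo(\vartheta^k)$, in particular convergence to $(\bar x^*,\bar y^*)$. I expect the only genuinely delicate point to be this upgrade from the bare geometric bound $D_k \leq \vartheta^k D_0$ to the little-$\mo$ statements, which hinges on the summability of $\vartheta^{-k}S_{k+1}$, together with the bookkeeping needed to transfer the rate from $(x^k,y^k)$ and from the increments $\bar z^k - \bar z^{k+1}$ back to $(\bar x^k,\bar y^k)$ through the iteration; the rest is a routine specialization of Proposition~\ref{prop:acc2_lagrangian_est}.
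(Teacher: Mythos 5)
Your proof is correct and follows essentially the same route as the paper's: specialize Proposition~\ref{prop:acc2_lagrangian_est} to $\rho = 0$ and $(x',y') = (x^*,y^*)$, multiply the resulting contraction inequality by $\vartheta^{-k}$, telescope to obtain summability of $\vartheta^{-k}$ times the residual terms, read off the $\mo(\vartheta^k)$ rates for $\norm{x^k-x^*}^2$, $\norm{y^k-y^*}^2$ and the increments $\norm{\bar z^k - \bar z^{k+1}}^2$, and transfer them to $(\bar x^k,\bar y^k)$ via the update formulas~\eqref{eq:acc-douglas-rachford-sc}. The only difference is cosmetic (an extra factor of $\vartheta^{-1}$ in the telescoped sum, which is immaterial), so there is nothing substantive to add.
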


\begin{proof}
  As $\mF$ and $\mG$ are strongly convex, a unique saddle-point
  $(x^*,y^*)$ must exist. Letting $(x',y') = (x^*,y^*)$,
  the choice of $\gamma$ in~\eqref{eq:acc-douglas-rachford-sc_} leads to
  Proposition~\ref{prop:acc2_lagrangian_est} being applicable for $\rho = 0$,
  such that~\eqref{eq:acc2_lagrangian_est} yields
  \begin{align*}
    \frac1{\vartheta^k}\Bigl[
    &\frac1{2\sigma} \norm{\bar x^{k} - x^*
      + \sigma\mK^*y^*}^2 + \frac1{2\tau} \norm{\bar y^{k} - y^*
      - \tau \mK x^*}^2 \Bigr]
    \\
    & \quad + \sum_{k'=0}^{k-1}
      \frac1{\vartheta^{k'}}
      \biggl[ (1-c) \Bigl[
      \frac{\norm{\bar x^{k'} - \bar x^{k'+1}}^2}{2\sigma}
      + \frac{\norm{\bar y^{k'} - \bar y^{k'+1}}^2}{2\tau} \Bigr]
      + c' \frac{\norm{x^{k'+1} - x^*}^2 + \norm{y^{k'+1} - y^*}^2}{2}
       \biggr] \\
    & \leq
      \frac1{2\sigma} \norm{\bar x^{0} - x^*
      + \sigma\mK^*y^*}^2 + \frac1{2\tau} \norm{\bar y^{0} - y^*
      - \tau \mK x^*}^2
  \end{align*}
  for each $k$. Consequently,
  the sequences
  $\seq{\vartheta^{-k}\norm{\bar x^k - \bar x^{k+1}}^2}$,
  $\seq{\vartheta^{-k}\norm{\bar y^k - \bar y^{k+1}}^2}$ as well as
  $\seq{\vartheta^{-k}\norm{x^k - x^*}^2}$,
  $\seq{\vartheta^{-k}\norm{y^k - y^*}^2}$ are summable. This implies the
  stated rates for $\seq{x^k}$ and $\seq{y^k}$ as well as
  $\norm{\bar x^{k+1} - \bar x^{k}}^2 = \mo(\vartheta^k)$ and
  $\norm{\bar y^{k+1} - \bar y^{k}}^2 = \mo(\vartheta^k)$. The convergence
  statements and rates for $\seq{\bar x^k}$ and $\seq{\bar y^k}$
  are then a consequence of~\eqref{eq:acc-douglas-rachford-sc}.
\end{proof}

\begin{corollary}
  The primal-dual gap obeys
  \[
  \gap(x^k,y^k) = \mo(\vartheta^{k/2}).
  \]
\end{corollary}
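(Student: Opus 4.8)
The plan is to control the full primal--dual gap at the iterates by the consecutive differences $\norm{\bar x^k - \bar x^{k+1}}$ and $\norm{\bar y^k - \bar y^{k+1}}$, which the preceding theorem shows to decay like $\mo(\vartheta^{k/2})$.

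First I would reduce the full gap to a restricted one over a bounded set. Since $\mF$ and $\mG$ are strongly convex they are strongly coercive, and by the preceding theorem $\seq{(x^k,y^k)}$ converges, hence lies in some bounded $X_0 \times Y_0 \subset \dom\mF \times \dom\mG$. By item~3 of Lemma~\ref{lem:full-gap-errors} there are bounded $X_0' \times Y_0' \subset \dom\mF \times \dom\mG$ with $\gap = \gap_{X_0' \times Y_0'}$ on $X_0 \times Y_0$; in particular $\gap(x^k,y^k) = \gap_{X_0' \times Y_0'}(x^k,y^k)$ for every $k$.

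Next I would apply the estimate~\eqref{eq:acc1_lagrangian_est0} from the proof of Lemma~\ref{lem:acc_lagrangian_est}, specialized to the iteration~\eqref{eq:acc-douglas-rachford-sc} by taking $\rho = 1$ (so $x_\rho = x$, $y_\rho = y$, and the left-hand side becomes $\mL(x^{k+1},y) - \mL(x,y^{k+1})$), $\hat x^k = \bar x^k$, $\hat y^k = \bar y^k$ and $\vartheta_p = \vartheta_d = \vartheta$. Discarding the two non-positive terms $-\tfrac{\gamma_1}2\norm{x^{k+1}-x}^2$ and $-\tfrac{\gamma_2}2\norm{y^{k+1}-y}^2$ and applying the Cauchy--Schwarz inequality to each of the four remaining scalar products gives
\[
\mL(x^{k+1},y) - \mL(x,y^{k+1}) \leq C_k(x,y)\,\bigl(\norm{\bar x^k - \bar x^{k+1}} + \norm{\bar y^k - \bar y^{k+1}}\bigr),
\]
where $C_k(x,y)$ collects $\sigma$, $\tau$, $\norm{\mK}$, $1 - \vartheta$ together with the norms $\norm{\bar x^{k+1} - x + \sigma\mK^*y}$, $\norm{\bar y^{k+1} - y - \tau\mK x}$, $\norm{x^{k+1} + \bar x^{k+1} - \bar x^k - x}$ and $\norm{y^{k+1} + \bar y^{k+1} - \bar y^k - y}$. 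Since all iterates converge, hence are bounded, and $(x,y)$ ranges over the bounded set $X_0' \times Y_0'$, the quantity $C_k(x,y)$ is bounded by a constant $C$ independent of $k$ and $(x,y)$. Taking the supremum over $(x,y) \in X_0' \times Y_0'$ therefore yields $\gap_{X_0' \times Y_0'}(x^{k+1},y^{k+1}) \leq C\bigl(\norm{\bar x^k - \bar x^{k+1}} + \norm{\bar y^k - \bar y^{k+1}}\bigr)$.

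Finally, the preceding theorem gives $\norm{\bar x^k - \bar x^{k+1}}^2 = \mo(\vartheta^k)$ and $\norm{\bar y^k - \bar y^{k+1}}^2 = \mo(\vartheta^k)$, hence $\norm{\bar x^k - \bar x^{k+1}} + \norm{\bar y^k - \bar y^{k+1}} = \mo(\vartheta^{k/2})$. Combining this with the two displays above and re-indexing ($\vartheta^{(k-1)/2} = \vartheta^{-1/2}\vartheta^{k/2}$) yields $\gap(x^k,y^k) = \gap_{X_0' \times Y_0'}(x^k,y^k) = \mo(\vartheta^{k/2})$. The one delicate point is the loss of a square root: the gap is bounded by the \emph{first} power of the consecutive differences $\bar z^k - \bar z^{k+1}$, since the scalar products in~\eqref{eq:acc1_lagrangian_est0} depend linearly on them, whereas it is the squared differences that decay like $\vartheta^k$; hence $\mo(\vartheta^{k/2})$ is the natural rate from this argument and one should not expect a better one here. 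The reduction from $\gap$ to $\gap_{X_0' \times Y_0'}$ via strong coercivity and the uniform bound on $C_k(x,y)$ are routine.
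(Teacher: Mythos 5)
Your proof is correct and takes essentially the same approach as the paper: reduce the full gap to a restricted gap over a bounded set via strong coercivity and Lemma~\ref{lem:full-gap-errors}, specialize~\eqref{eq:acc1_lagrangian_est0} at $\rho=1$, $\hat z^k=\bar z^k$, $\vartheta_p=\vartheta_d=\vartheta$, drop the strong-convexity terms, Cauchy--Schwarz, and invoke $\norm{\bar z^{k}-\bar z^{k+1}}^2=\mo(\vartheta^k)$ from the preceding theorem. Your remark about the loss of a square root also matches why the paper only claims $\mo(\vartheta^{k/2})$ for the non-ergodic gap.
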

\begin{proof}
  As $\mF$ and $\mG$ are strongly convex, these
  functionals are also strongly coercive and Lemma~\ref{lem:full-gap-errors}
  yields that $\gap(x^k,y^k) = \gap_{X_0 \times Y_0}(x^k,y^k)$
  for some bounded $X_0 \times Y_0 \subset \dom \mF \times \dom \mG$
  containing the saddle-point $(x^*,y^*)$ and all $k$.
  For $(x,y) \in X_0 \times Y_0$, the
  estimate~\eqref{eq:acc1_lagrangian_est0} gives, for $\rho = 1$,
  \begin{align*}
    \mL(x^{k+1},y) - \mL(x,y^{k+1})
    &\leq \norm{\bar x^k - \bar x^{k+1}} \Bigl[
      \frac1\sigma \norm{\bar x^{k+1} - x + \sigma \mK^*y}
      + \norm{\mK^*(y^{k+1} + \bar y^{k+1} - \bar y^k - y)}
      \Bigr] \\
    & \quad
      + \norm{\bar y^k - \bar y^{k+1}}
      \Bigl[
      \frac1\tau \norm{\bar y^{k+1} - y - \tau \mK x} +
      \norm{\mK(x^{k+1} + \bar x^{k+1} - \bar x^k - x)}
      \Bigr],
  \end{align*}
  so by boundedness, $\gap_{X_0 \times Y_0}(x^{k+1},y^{k+1})
  \leq C (\norm{\bar x^k - \bar x^{k+1}} + \norm{\bar y^k - \bar y^{k+1}} )
  = \mo(\vartheta^{k/2})$.
\end{proof}

\begin{remark}
  Considering the ergodic sequences
  \[
  x^k_{\erg} = \nu_k \sum_{k'=0}^{k-1} \vartheta^{-k'} x^{k'+1},
  \qquad
  y^k_{\erg} = \nu_k \sum_{k'=0}^{k-1} \vartheta^{-k'} y^{k'+1},
  \qquad
  \nu_k = \frac{(1 - \vartheta)\vartheta^{k-1}}{1 - \vartheta^k}
  = \Bigl[\sum_{k'=0}^{k-1} \vartheta^{-k'} \Bigr]^{-1},
  \]
  gives the slightly worse rates $\norm{x^k_{\erg} - x^*}^2 = \mO(\vartheta^k)$
  and $\norm{y^k_{\erg} - y^*}^2 = \mO(\vartheta^k)$.
  Moreover, if the restriction on
  $\gamma$ in~\eqref{eq:acc-douglas-rachford-sc_} is not sharp,
  i.e., $\gamma < \frac{2\gamma_1}{1 + (1 + 2\sigma\gamma_1)
    \sigma\tau\norm{\mK}^2}$, then \eqref{eq:acc2_gamma_restr}
  holds for some $\rho > 0$ and
  the estimate~\eqref{eq:acc2_lagrangian_est} also implies
  \begin{align*}
    \rho \bigl( \mL(x^{k}_{\erg},y) - \mL(x,y^{k}_{\erg}) \bigr)
    & \leq
      \nu_k
      \Bigl[ \frac1{2\sigma} \norm{\bar x^{0} - x_\rho + \sigma \mK^*y_\rho}^2
      + \frac1{2\tau} \norm{\bar y^{0} - y_\rho - \tau \mK x_\rho}^2 \Bigr] \\
    & \quad
      - \nu_k\vartheta^{-k} \Bigl[ \frac1{2\sigma} \norm{\bar x^{k} - x_\rho + \sigma \mK^*y_\rho}^2
      + \frac1{2\tau} \norm{\bar y^{k} - y_\rho - \tau \mK x_\rho}^2 \Bigr].
  \end{align*}
  This leads to the following estimate on the restricted
  primal-dual gap
  \begin{align*}
    \gap_{X_0 \times  Y_0}(x^k_{\erg}, y^k_{\erg}) \leq
    \nu_k \Bigl[ \sup_{X_0 \times Y_0}
    &\frac{1}{2\rho\sigma}
      \norm{\bar x^0 - x_\rho + \sigma \mK^*y_\rho}^2  +
      \frac{1}{2\rho\tau} \norm{\bar y^0 - y_\rho - \tau \mK x_\rho}^2 \Bigr],
  \end{align*}
  and, choosing a bounded $X_0 \times Y_0$ such that
  $\gap(x^k_{\erg}, y^k_{\erg}) = \gap_{X_0 \times Y_0}(x^k_{\erg}, y^k_{\erg})$
  for all $k$,
  to the rate
  \[
  \gap(x^k_{\erg}, y^k_{\erg})
  = \mO(\vartheta^{k}).
  \]
  Thus, while having roughly the same qualitative convergence
  behavior, the ergodic sequences admit a better rate in the
  exponential decay.
\end{remark}

\begin{remark}
  With the choice
  $\sigma \gamma_1 = \tau \gamma_2$,
  the best $\vartheta$ with respect to the
  restrictions~\eqref{eq:acc2_gamma_restr} for fixed
  $\sigma, \tau > 0$ is given by
  \[
  \vartheta = \frac1{1 + \sigma\gamma},
  \qquad
  \gamma = \frac{2\gamma_1}{1 + (1 + 2\sigma\gamma_1) \sigma\tau \norm{\mK}^2}.
  \]
  For varying $\sigma$ and $\tau$,
  this choice of $\vartheta$ becomes minimal when
  $\sigma\gamma$ becomes maximal. Performing the maximization
  in the non-trivial case $\mK \neq 0$
  leads to
  the problem of
  finding the positive root of the third-order polynomial
  $\sigma \mapsto 1 - \tfrac{\gamma_1}{\gamma_2} \norm{\mK}^2 \sigma^2
  -4 \tfrac{\gamma_1^2}{\gamma_2} \norm{\mK}^2 \sigma^3$.
  A very rough approximation of this root
  can be obtained by dropping the cubic term, leading to the easily
  computable parameters
  \[
  \sigma = \sqrt{\frac{\gamma_2}{\gamma_1\norm{\mK}^2}},
  \qquad
  \tau = \sqrt{\frac{\gamma_1}{\gamma_2\norm{\mK}^2}},
  \qquad
  \gamma = \frac{\gamma_1
    \norm{\mK}}{\norm{\mK} + \sqrt{\gamma_1\gamma_2}},
  \qquad
  \gamma' = \frac{\gamma_2
    \norm{\mK}}{\norm{\mK} + \sqrt{\gamma_1\gamma_2}}
  \]
  which, in turn, can be expected to give reasonable convergence properties.
  \end{remark}

\subsection{Preconditioning}

The iterations~\eqref{eq:accelerated-douglas-rachford_}
and~\eqref{eq:acc-douglas-rachford-sc_} can now be preconditioned
with the same technique as for the basic iteration (see
Subsection~\ref{subsec:dr-precon}), i.e., based on the amended saddle-point
problem~\eqref{eq:precon-saddle-point}.

Let us first discuss preconditioning
of~\eqref{eq:accelerated-douglas-rachford_}: With $\mF$ being strongly
convex, the primal functional in~\eqref{eq:precon-saddle-point} is strongly
convex, so we may indeed employ the accelerated Douglas--Rachford
iteration to this problem and
obtain, in particular, an ergodic convergence rate
of $\mO(1/k^2)$.
Taking the explicit structure into account similar to
Subsection~\ref{subsec:dr-precon},
one arrives at the \emph{preconditioned accelerated Douglas--Rachford}
method according to Table~\ref{tab:pre:accelerated-douglas-rachford}.
In particular,
the update step for $d^{k+1}$ again corresponds to an iteration
step for the solution of $Td^{k+1} = b^{k+1}$ with respect to the matrix
splitting $T = M + (T - M)$.
In order to obtain convergence,
one needs that $M$ is a feasible preconditioner for $T =
\id + \sigma_0\tau_0 \mK^*\mK$ and has choose the parameter
$\gamma$ according to $\gamma < 2\gamma_1/(1 +
\sigma_0\tau_0\norm{\mK^*\mK + \mH^*\mH}) = 2\gamma_1 / \norm{M}$.

\begin{table}
  \centering
  \begin{tabular}{p{0.15\linewidth}p{0.75\linewidth}}
    \toprule

    \multicolumn{2}{l}{\textbf{paDR}\ \ \ \ \textbf{Objective:}
    \hfill    Solve  \ \ $\min_{x\in \dom \mF} \max_{y \in  \dom \mG}
    \ \langle \mK x,y\rangle + \mF(x) - \mG(y)$
    \hfill\mbox{} }
    \\
    \midrule

    Prerequisites:
 &
   $\mF$ is strongly convex with modulus $\gamma_1 > 0$     \\[\smallskipamount]
    Initialization:
 &
   $(\hat x^0, \hat y^0) \in X \times Y$
   initial guess, $\sigma_0 > 0, \tau_0 > 0$ initial step sizes, \\
 &
   $M$ feasible preconditioner for
   $T = \id + \sigma_0\tau_0 \mK^*\mK$, \\
 &
   $0 < \gamma < \frac{2\gamma_1}{\norm{M}}$
   acceleration factor,
   $\vartheta_0 = \frac1{\sqrt{1 + \sigma_0\gamma}}$
    \\[\medskipamount]
    Iteration:
 &
   \raisebox{1.6em}[0mm][9.1\baselineskip]{\begin{minipage}[t]{1.0\linewidth}
       {       \renewcommand{\theHequation}{padr}       \begin{equation}\label{eq:pre:accelerated-douglas-rachford}\tag{paDR}
         \left\{
           \begin{aligned}
             x^{k+1} &= (\id + \sigma_k\subgrad \mF)^{-1}(\hat x^k) \\
             y^{k+1} &= (\id + \tau_k\subgrad \mG)^{-1}(\hat y^k) \\
             b^{k+1} &= ((1 + \vartheta_k) x^{k+1} - \vartheta_k\hat x^k)
             - \sigma_k\mK^*((1 + \vartheta_k)y^{k+1} - \hat y^k) \\
             d^{k+1} &= d^{k} + M^{-1} (b^{k+1} - T d^{k}) \\
             \hat x^{k+1} &= \vartheta_k(\hat x^k - x^{k+1}) + d^{k+1} \\
             \hat y^{k+1} &= y^{k+1} + \vartheta_k^{-1} \tau_k \mK d^{k+1} \\
             \sigma_{k+1} &= \vartheta_k\sigma_k,
             \quad \tau_{k+1} = \vartheta_k^{-1}\tau_k,
             \quad \vartheta_{k+1} = \tfrac1{\sqrt{1 + \sigma_{k+1}\gamma}} \\
           \end{aligned}
         \right.
       \end{equation}}
     \end{minipage}}
    \\
    \vspace*{-\smallskipamount}
    Output:
 &
   \vspace*{-\smallskipamount}
   $\seq{(x^k,y^k)}$  primal-dual sequence  \\
    \bottomrule
  \end{tabular}
  \caption{The preconditioned accelerated Douglas--Rachford iteration for the solution of
    convex-concave saddle-point problems of
    type~\eqref{eq:saddle-point-prob}.}
  \label{tab:pre:accelerated-douglas-rachford}
\end{table}

The latter condition, however, requires an estimate
on $\norm{M}$ which usually has to be
obtained case by case. One observes, nevertheless, the following properties
for the feasible preconditioner $M_n$ corresponding
to the $n$-fold application of the feasible preconditioner $M$.
In particular, it will turn out that
applying a preconditioner multiple times only requires
an estimate on $\norm{M}$.
Furthermore, the norms $\norm{M_n}$ approach $\norm{T}$ as $n \to \infty$.

\begin{proposition}\label{multiple:precon:norm}
  Let $M$ be a feasible preconditioner for $T$.
  For each $n \geq 1$, denote
  by $M_n$ the $n$-fold application of $M$, i.e., such that
  $d^{k+1} = d^k + M_n^{-1}(b^{k+1} - Td^k)$ holds for $d^{k+1}$
  in~\eqref{eq:precon-nfold} of Proposition~\ref{prop:feasible-precon}.
  Then,
  the norm of the $M_n$ is monotonically decreasing for increasing
  $n$, i.e.,
  \[
  \|M_{n}\| \leq \|M_{n-1}\| \leq \ldots \leq \norm{M_1} = \norm{M}.
  \]
  Furthermore, if $T$ is invertible, the spectral radius of
  $\id - M^{-1}T$ obeys
  $\rho(\id - M^{-1}T) < 1$ and
  \begin{equation}
    \label{eq:preconditioner_norm_est}
    \norm{M_n} \leq \frac{\norm{T}}{1
      -
                  \norm{M^{-1}} \rho(\id - M^{-1}T)^n
    }
  \end{equation}
  whenever the expression on the right-hand side is non-negative.
\end{proposition}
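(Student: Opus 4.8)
The plan is to exploit the recursive structure of the $n$-fold preconditioner. Writing $R = \id - M^{-1}T$, one iteration of the $n$-fold scheme amounts to $d \mapsto d + M_n^{-1}(b - Td)$ where, by telescoping the inner iterations in~\eqref{eq:precon-nfold}, the error operator is $\id - M_n^{-1}T = (\id - M^{-1}T)^n = R^n$; this gives the algebraic identity $M_n^{-1} = (\id - R^n)T^{-1}$, or equivalently $M_n = T(\id - R^n)^{-1}$ when $T$ is invertible. First I would establish that $M_n$ is indeed a feasible preconditioner for $T$ (which is already asserted in Proposition~\ref{prop:feasible-precon}(ii)), so in particular $M_n$ is self-adjoint and positive definite and $M_n - T$ is positive semi-definite; feasibility of $M_n$ also supplies $M_{n-1} - M_n$ being positive semi-definite when one compares the $(n-1)$-fold to the $n$-fold scheme (apply feasibility of $M$ for $M_{n-1}$, noting $M_{n-1} - M$ is one fewer application). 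From $0 \preceq T \preceq M_n \preceq M_{n-1} \preceq \dots \preceq M$ and self-adjointness, the operator norms satisfy $\norm{T} \le \norm{M_n} \le \norm{M_{n-1}} \le \dots \le \norm{M}$, which is the first claim.

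For the quantitative bound, assume $T$ invertible. Then $M$ feasible for $T$ with $M-T \succeq 0$, $M \succeq T \succ 0$ boundedly invertible, forces $R = \id - M^{-1}T$ to have spectral radius $< 1$: indeed $R$ is similar to the self-adjoint operator $\id - M^{-1/2}TM^{-1/2}$ whose spectrum lies in $[0,1)$ because $0 \prec M^{-1/2}TM^{-1/2} \preceq \id$ (the upper bound from $T \preceq M$, the strict lower bound from $T \succ 0$ boundedly invertible and $M$ bounded). Hence $\rho(R) = \rho(\id - M^{-1}T) < 1$ and $R^n \to 0$; moreover $\norm{R^n}$ is controlled via $M_n - T = T(\id - R^n)^{-1} - T = T(\id - R^n)^{-1}R^n$, but it is cleaner to bound $\norm{M_n}$ directly from $M_n^{-1} = (\id - R^n)T^{-1}$, i.e. $T^{-1} - M_n^{-1} = R^n T^{-1}$, giving $M_n^{-1} = T^{-1} - R^n T^{-1}$. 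Taking norms on $M_n = T(\id - R^n)^{-1}$ and using the Neumann series $(\id - R^n)^{-1} = \sum_{j\ge 0} R^{nj}$, one gets $\norm{M_n} \le \norm{T}\,\norm{(\id - R^n)^{-1}}$; to turn $\norm{R^n}$ into $\rho(R)^n$ one writes $R^{nj}$ through $M^{-1}$ and $T$, using $R^n = M^{-1}(M-T)\cdots$ — more directly, since $R = M^{-1}(M-T)$ and, for the self-adjoint conjugate, $\norm{R^n} \le \norm{M^{-1}}\,\norm{(M-T)(\,\cdot\,)}$ after peeling one factor — the cleanest route is $\norm{M_n} \le \norm{T}/(1 - \norm{(\id-R^n)^{-1} - \id}^{-1}\dots)$; rather, estimate $\norm{(\id - R^n)^{-1}} \le 1/(1 - \norm{R^n})$ and then $\norm{R^n} = \norm{M^{-1}(M-T)R^{n-1}} \le \norm{M^{-1}}\,\norm{(M-T)R^{n-1}}$, and $(M-T)R^{n-1}$ is self-adjoint-conjugate to a power of the self-adjoint $\id - M^{-1/2}TM^{-1/2}$, hence has norm $\le \rho(R)^{n}$ up to the factor already extracted. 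Collecting, $\norm{M_n} \le \norm{T}/\bigl(1 - \norm{M^{-1}}\rho(\id - M^{-1}T)^n\bigr)$, valid when the denominator is positive, i.e. when the right-hand side of~\eqref{eq:preconditioner_norm_est} is non-negative.

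The main obstacle I anticipate is the bookkeeping in the last step: converting the crude operator-norm estimate $\norm{R^n} \le \norm{M^{-1}}\,\rho(R)^n$ into a rigorous inequality requires carefully exploiting that although $R = \id - M^{-1}T$ is generally \emph{not} self-adjoint, it is similar via $M^{1/2}$ to the self-adjoint contraction $S = \id - M^{-1/2}TM^{-1/2}$ with $\norm{S} = \rho(S) = \rho(R) < 1$, so that $R^n = M^{-1/2}S^n M^{1/2}$ and $M^{-1}R^{n-1}$-type products can be regrouped to extract exactly one factor of $\norm{M^{-1}}$ (or $\norm{M^{-1/2}}^2$) and $n$ factors from $\norm{S} = \rho(R)$; one must also confirm $M^{1/2}$, $M^{-1/2}$ exist and are bounded, which follows from $M$ being self-adjoint, positive definite and boundedly invertible. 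Everything else — the telescoping identity $\id - M_n^{-1}T = R^n$, the monotone chain of operator inequalities, and the Neumann-series bound — is routine once the similarity structure is in place.
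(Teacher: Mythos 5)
Your high-level plan follows the same route as the paper's proof: the telescope identity $\id - M_n^{-1}T = (\id - M^{-1}T)^n$, the similarity $\id - M^{-1}T = M^{-1/2}(\id - M^{-1/2}TM^{-1/2})M^{1/2}$ with a self-adjoint contraction inside, and a Neumann-type bound for $\norm{M_n}$. However, two steps do not actually close.

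\emph{Monotonicity.} Your justification that $M_n \preceq M_{n-1}$ --- ``apply feasibility of $M$ for $M_{n-1}$, noting $M_{n-1} - M$ is one fewer application'' --- does not parse as an argument; feasibility is a statement about $M$ versus $T$, not about $M_n$ versus $M_{n-1}$. The clean route (and the paper's) is to compare \emph{inverses}: from the telescoping one gets the recursion
\[
M_{n+1}^{-1} - M_n^{-1} \;=\; (\id - M^{-1}T)^n M^{-1}
\;=\; M^{-1/2}\bigl(\id - M^{-1/2}TM^{-1/2}\bigr)^n M^{-1/2} \;\succeq\; 0,
\]
since the middle factor is a power of a positive semi-definite self-adjoint operator. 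Hence $M_{n+1}^{-1} \succeq M_n^{-1}$, and because all the $M_n$ are self-adjoint positive definite this is equivalent to $M_{n+1} \preceq M_n$, so $\norm{M_{n+1}} \leq \norm{M_n}$. Note this argument does not require $T$ to be invertible, whereas your formula $M_n^{-1} = (\id - R^n)T^{-1}$ does; the monotonicity statement holds without invertibility of $T$, so you need the recursion form.

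\emph{Norm bound.} The obstacle you flag is genuine and cannot be repaired by the bookkeeping you propose. Writing $S = \id - M^{-1/2}TM^{-1/2}$, one has $R^n = M^{-1/2}S^n M^{1/2}$ and hence, since $S$ is self-adjoint with $\norm{S} = \rho(S) = \rho(R)$,
\[
\norm{R^n} \;\leq\; \norm{M^{-1/2}}\,\norm{S}^n\,\norm{M^{1/2}}
\;=\; \sqrt{\norm{M^{-1}}\,\norm{M}}\;\rho(\id - M^{-1}T)^n.
\]
The condition-number factor $\sqrt{\norm{M^{-1}}\norm{M}}$ cannot be reduced to $\norm{M^{-1}}$ in general: with your regrouping $\norm{R^n} = \norm{M^{-1}(M-T)R^{n-1}}$, the inner factor is $(M-T)R^{n-1} = M^{1/2}S^n M^{1/2}$, so one actually gets $\norm{(M-T)R^{n-1}} \leq \norm{M}\rho(R)^n$, and the combined bound is $\kappa(M)\rho(R)^n$, worse still. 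You should be aware that the paper's own proof performs a similar conversion from the $M$-norm back to the Euclidean norm via the identity $\norm{\id - M_n^{-1}T} = \norm[M]{M^{-1/2}(\id - M_n^{-1}T)M^{-1/2}}$, and this identity does not hold (the right-hand side equals $\norm{(\id - M_n^{-1}T)M^{-1}}$). A sanity check with $T = \id$, $M = 2\id$ gives $M_1 = M$, $\norm{M_1} = 2$, but the right-hand side of~\eqref{eq:preconditioner_norm_est} for $n = 1$ evaluates to $4/3$; so the stated constant $\norm{M^{-1}}$ cannot be correct, and the natural constant emerging from the similarity argument is $\sqrt{\norm{M}\norm{M^{-1}}}$. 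Your unease about the final step is therefore well-founded: the gap is not merely in the bookkeeping but in the targeted inequality itself.
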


\begin{proof}
  The proof is adapted from the proof of Proposition 2.15 of \cite{BS1}
  from which we need the following two conclusions.
    One is that for any linear, self-adjoint and positive definite
  pair of operators $B$ and $S$ in the Hilbert space $X$,
  the following three statements are equivalent:
  $B - S \geq 0$, $\id - B^{-1/2}SB^{-1/2} \geq 0$ and $\sigma(\id - B^{-1}S)
  \subset {[{0,\infty}[}$ where $\sigma$ denotes the spectrum.
  Here, $B^{1/2}$ is the square root of $B$, and $B^{-1/2}$ is its inverse.
  Actually, we also have
  \[
  \sigma(\id - B^{-1}S) = \sigma \bigl(B(\id - B^{-1}S)B^{-1} \bigr) 
  = \sigma(\id - SB^{-1})\subset {[{0,\infty}[},
  \]
  while $\sigma(\id - SB^{-1})\subset {[{0,\infty}[}$ also means
  $S^{-1} \geq B^{-1}$.

  The other conclusion from Proposition 2.15 of \cite{BS1} is the recursion relation for $M_n$ according to
  \begin{equation*}
  M_{n+1}^{-1}   = M_n^{-1} + (\id - M^{-1}T)^nM^{-1}
  = M_n^{-1} + M^{-1/2}(\id - M^{-1/2}TM^{-1/2})^n M^{-1/2}
  \end{equation*}
  with $\id - M^{-1/2}TM^{-1/2}$ being a positive semidefinite operator. Actually, since $M^{-1/2}$ is also self-adjoint, we have that $\id - M^{-1/2}TM^{-1/2}$ is also self-adjoint, and it follows that
  \[
  M^{-1/2}(\id - M^{-1/2}TM^{-1/2})^n M^{-1/2} \geq 0 \quad
  \Rightarrow  \quad M_{n+1}^{-1}  \geq M_{n}^{-1}.
  \]
  By the above equivalence, this leads to
  \[
  M_{n+1} \leq M_{n}, \quad \text{and}  \quad \|M_{n+1}\| \leq \|M_{n}\|
 \]
 from which the monotonicity behavior follows by induction.

 For the estimate on $\norm{M_n}$ in case $T$ is invertible, introduce
 an equivalent norm on $X$ associated with the $M$-scalar product
 $\scp[M]{x}{x'} = \scp{Mx}{x'}$. Then, $\id - M_n^{-1}T$ is
 self-adjoint with respect to the $M$-scalar product:
 \begin{align*}
   \scp[M]{(\id - M_n^{-1}T)x}{x'}
   = \scp{M(\id - M^{-1}T)^nx}{x'} &=
     \scp{(\id - TM^{-1})^n M x}{x'} \\
   &= \scp{Mx}{(\id - M^{-1}T)^n x'}
     =\scp[M]{x}{(\id - M_n^{-1}T)x'}
 \end{align*}
 for $x,x' \in X$.
  Let $0 \leq q \leq 1$ and suppose that $x \neq 0$
 is chosen such that
 $\norm[M]{(\id - M^{-1}T)x}^2 \geq q^2 \norm[M]{x}^2$. Then,
 \[
 (1-q^2) \scp{Mx}{x} =
 (1 - q^2)\norm[M]{x}^2 \geq \scp{Tx}{x}
 + \scp{(\id - T^{1/2}M^{-1}T^{1/2}) T^{1/2} x}{T^{1/2}x}
 \geq \scp{Tx}{x},
 \]
 as $\id - T^{1/2}M^{-1}T^{1/2}$ is positive semi-definite due to
 $\sigma(\id - T^{1/2}M^{-1}T^{1/2}) = \sigma(\id - M^{-1}T)
 \subset [0,\infty[$.
 Consequently, since $T$ is continuously invertible,
 \[
 \norm{T^{-1}}^{-1} \norm{x}^2 \leq (1-q^2) \norm{M} \norm{x}^2
 \qquad \Rightarrow
 \qquad
 q^2 \leq 1 - \norm{T^{-1}}^{-1}\norm{M}^{-1}.
 \]
 Hence, $q$ must be bounded away from $1$ and, consequently,
 \[
 \rho(\id - M^{-1}T) = \norm[M]{\id - M^{-1}T} < 1
 \]
 as the contrary would result in a contradiction. Now,
 $\norm[M]{\id - M_n^{-1}T} \leq \norm[M]{\id - M^{-1}T}^n
 = \rho(\id-M^{-1}T)^n$ as well as
 \begin{multline*}
   \norm{\id - M_n^{-1}T}
   = \norm[M]{M^{-1/2}(\id - M_n^{-1}T)M^{-1/2}}
   \\
   \leq \norm[M]{M^{-1/2}}^2 \rho(\id - M^{-1}T)^n
   = \norm{M^{-1}} \rho(\id - M^{-1}T)^n.
 \end{multline*}
 Eventually, estimating
 \[
 \norm{M_n} \leq \norm{T} + \norm{M_n}\norm{\id - M_n^{-1}T}
 \leq \norm{T} + \norm{M_n}\norm{M^{-1}} \rho(\id - M^{-1}T)^n
 \]
 gives, if $\norm{M^{-1}} \rho(\id - M^{-1}T)^n < 1$,
 the desired estimate~\eqref{eq:preconditioner_norm_est}.
                        \end{proof}

\begin{remark}
    Let us shortly
  discuss possible ways to  estimate $\norm{M}$ for the classical
  preconditioners in Table~\ref{tab:choice-preconditioner}.
  \begin{itemize}
  \item For the Richardson preconditioner, it is clear that $\|M\| = \lambda$ with $\lambda \geq \|T\|$ being an estimate on $\|T\|$.

  \item For the damped Jacobi preconditioner,
    one has to choose $\lambda \geq \lambda_{\max}(T - D)$
    where $\lambda_{\max}$ denotes the greatest eigenvalue,
    which results in
    $\norm{M} = (\lambda + 1) \norm{D}$. The norm of the diagonal matrix
    $D$ is easily obtainable as the maximum of the entries.
                
  \item
    For symmetric SOR preconditioners, which include the
    symmetric Gauss--Seidel case, we can express $M - T$ as
    \begin{equation}\label{eq:preconditioner:SSOR:general}
    M - T = \frac{\omega}{2 - \omega} \bigl( \tfrac{1 - \omega}{\omega} D
    + E \bigr) D^{-1} \bigl( \tfrac{1 - \omega}{\omega} D + E^* \bigr)
    \end{equation}
    for $T = D - E - E^*$, $D$ diagonal and $E$ lower triangular
    matrix of $T$, respectively, as well as $\omega \in {]{0,2}[}$.
    Hence, $\norm{M - T}$ can be estimated by estimating
    the squared norm of $\tfrac{1-\omega}{\omega} D^{1/2} + D^{-1/2}E^*$.
    This can, for instance, be done as follows.
    Denoting by $t_{ij}$ the entries of $T$ and performing some
    computational steps, we get for fixed $i$ that
    \begin{align*}
    (\tfrac{1-\omega}{\omega} D^{1/2}x + D^{-1/2}E^*x)_i^2
      &=
        \Bigl(\frac{1-\omega}{\omega} t_{ii}^{1/2} x_i -
        \frac{1}{t_{ii}^{1/2}}
        \sum_{j > i} t_{ij} x_j \Bigr)^2
      \\
      &\leq
      \Bigl[ \Bigabs{\frac{1-\omega}{\omega}}^2 t_{ii}
        + \Bigl( \sum_{j > i} \Bigabs{\frac{t_{ij}}{t_{ii}}} \Bigr)
        \max_{j > i} \abs{t_{ij}} \Bigr] \norm{x}^2
    \end{align*}
    leading to
    \[
    \bignorm{(\tfrac{1-\omega}{\omega} D^{1/2} + D^{-1/2}E^*)}^2
    \leq \abs{\tfrac{1 - \omega}{\omega}}^2 \norm{D} +
    \norm[\infty]{\diag(D^{-1}E^{*})} \max_{j \neq i} \abs{t_{ij}}
    = K.
    \]
    where $\norm[\infty]{\,\cdot\,}$ is the maximum row-sum norm
    and symmetry of $T$ has been used.
    This results in
    \[
    \norm{M} \leq \norm{T} + \frac{\omega}{2-\omega} K.
    \]
    In particular, in case that $T$ is weakly diagonally dominant, then
    $\norm[\infty]{D^{-1}E} \leq 1$ such that for the Gauss--Seidel
    preconditioner one obtains the easily computable estimate
    $\norm{M} \leq \norm{T} + \max_{j \neq i} \abs{t_{ij}}$.

\end{itemize}
\end{remark}

Let us next discuss preconditioning of~\eqref{eq:acc-douglas-rachford-sc_}.
It is immediate that for
strongly convex $\mF$ and $\mG$, the acceleration strategy in~\eqref{eq:acc-douglas-rachford-sc} may be
employed for the
modified saddle-point problem~\eqref{eq:precon-saddle-point}. As before,
this results in a preconditioned version
of~\eqref{eq:acc-douglas-rachford-sc_} with $M = \id + \vartheta^2
\sigma\tau(\mK^*\mK + \mH^*\mH)$ being a feasible preconditioner for
$T= \id + \vartheta^2 \sigma\tau \mK^*\mK$. However, as one usually
constructs $M$ for given $T$, it will in this case
depend on $\vartheta$ and, consequently,
on the restriction on $\gamma$ which reads in this context as
\begin{equation}
  \label{eq:precon_strong_convex_gamma}
  \gamma \leq \frac{2\gamma_1}{1 + (1 + 2\sigma\gamma_1)\vartheta^{-2}(\norm{M} - 1)}.
\end{equation}
As $\vartheta = \frac{1}{1 + \sigma\gamma}$, the condition on $\gamma$ is implicit and it is not clear whether it will be satisfied for a given $M$.
Nevertheless, if the latter is the case, preconditioning yields the iteration
scheme~\eqref{eq:pre:acc-douglas-rachford-sc_}
in Table~\ref{tab:pre:acc-douglas-rachford-sc-pre} which inherits all
convergence properties of the unpreconditioned iteration.
So, in order to satisfy the conditions on $M$ and $\gamma$, we
introduce and discuss a notion that leads to sufficient
conditions.

\begin{table}
  \centering
  \begin{tabular}{p{0.15\linewidth}p{0.75\linewidth}}
    \toprule

    \multicolumn{2}{l}{\textbf{paDR$^{sc}$}\ \ \ \ \textbf{Objective:}
    \hfill    Solve  \ \ $\min_{x\in \dom \mF} \max_{y \in  \dom \mG}
    \ \langle \mK x,y\rangle + \mF(x) - \mG(y)$
    \hfill\mbox{} }
    \\
    \midrule

    Prerequisites:
 &
   $\mF, \mG$ are strongly convex with respective moduli $\gamma_1,\gamma_2,
   > 0$
       \\[\smallskipamount]
    Initialization:
 &
   $(\bar x^0, \bar y^0) \in X \times Y$
   initial guess, $\sigma, \tau > 0$ step sizes with $\sigma\gamma_1
   = \tau\gamma_2$, \\
 &
   $\gamma$ acceleration factor, $M$
   feasible preconditioner
   for $T = \id + \vartheta^2\sigma\tau \mK^*\mK$
    \\
 &
   and such that
   $\gamma \leq \frac{2\gamma_1}{1 + (1 + 2\sigma\gamma_1)\vartheta^{-2}(\norm{M} - 1)}$
   where
   $\vartheta = \frac1{1 + \sigma\gamma}$
    \\[\medskipamount]

    Iteration:
 &
   \raisebox{0.9em}[0mm]{\begin{minipage}[t]{1.0\linewidth}
       \begin{equation}\label{eq:pre:acc-douglas-rachford-sc_}\tag{paDR$^{sc}$}
         \left\{
           \begin{aligned}
             x^{k+1} &= (\id + \sigma \subgrad \mF)^{-1}(\bar x^k) \\
             y^{k+1} &= (\id + \tau \subgrad \mG)^{-1}(\bar y^k) \\
             b^{k+1} &=
             (\tfrac{1 + \vartheta}{\vartheta} x^{k+1} - \bar x^k)
             - \vartheta \sigma
             \mK^*(\tfrac{1 + \vartheta}{\vartheta}y^{k+1} - \bar y^k)
             \\
             d^{k+1} &= d^{k} + M^{-1}(b^{k+1} - Td^{k}) \\
             \bar x^{k+1} &= \bar x^k - \vartheta^{-1} x^{k+1} + d^{k+1} \\
             \bar y^{k+1} &= y^{k+1} + \vartheta \tau \mK d^{k+1}
           \end{aligned}
         \right.
       \end{equation}
     \end{minipage}}
    \\
    \vspace*{-\smallskipamount}
    Output:
 &
   \vspace*{-\smallskipamount}
   $\seq{(x^k,y^k)}$  primal-dual sequence  \\
    \bottomrule
  \end{tabular}
  \caption{The preconditioned accelerated
    Douglas--Rachford iteration for the solution of
    strongly convex-concave saddle-point problems of
    type~\eqref{eq:saddle-point-prob}.}
  \label{tab:pre:acc-douglas-rachford-sc-pre}
\end{table}

\begin{definition}
  \label{def:theta_norm_monotone}
  Let $(M_\vartheta)_\vartheta$, $\vartheta \in [0,1]$
  be a family of feasible preconditioners for
  $T_\vartheta = \id + \vartheta^2 T'$ where $T' \geq 0$. We
  call
  this family \emph{$\vartheta$-norm-monotone with respect to $T'$}, if
  $\vartheta \leq \vartheta'$ implies $\vartheta^{-2}(\norm{M_\vartheta} - 1)
  \leq (\vartheta')^{-2}(\norm{M_{\vartheta'}}-1)$ 
  for $\vartheta,\vartheta' \in [0,1]$.
\end{definition}

\begin{lemma}
  \label{lem:strong_convex_gamma_choice}
  If $(M_\vartheta)_\vartheta$ is $\vartheta$-norm-monotone with respect to
  $\sigma\tau \mK^*\mK$ and
  $\gamma_0 > 0$ satisfies, for some $\vartheta' \in [0,1]$,
  \[
  0 <
  \gamma_0 \leq \frac{2\gamma_1}{1 + (1 + 2\sigma\gamma_1)(\vartheta')^{-2}(\norm{M_{\vartheta'}} - 1)},
  \qquad
  \frac1{1+\sigma\gamma_0} \leq \vartheta',
  \]
  then, $\gamma = \gamma_0$
  satisfies~\eqref{eq:precon_strong_convex_gamma} for
  $M = M_{\vartheta}$ with $\vartheta = \frac1{1 + \sigma\gamma_0}$ and
  we have, for each $\gamma > 0$, the implication
  \[
  \gamma_0 \leq \gamma \leq
  \frac{2\gamma_1}{1 + (1 + 2\sigma\gamma_1)\vartheta^{-2}(\norm{M_{\vartheta}} - 1)}
  \qquad
  \Rightarrow
  \qquad
  \frac1{1 + \sigma\gamma} \leq \vartheta.
  \]
\end{lemma}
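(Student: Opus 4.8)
The statement is pure bookkeeping built on two monotonicities, and the plan is to isolate them first and then read off both assertions. The two facts are: (i) $\gamma \mapsto (1+\sigma\gamma)^{-1}$ is strictly decreasing on $[0,\infty[$; and (ii) by $\vartheta$-norm-monotonicity of $(M_\vartheta)_\vartheta$ with respect to $\sigma\tau\mK^*\mK$, the quantity $\vartheta^{-2}(\norm{M_\vartheta}-1)$ is non-decreasing in $\vartheta$. I would also record up front that each $M_\vartheta$, being feasible for $T_\vartheta = \id + \vartheta^2\sigma\tau\mK^*\mK \geq \id$, satisfies $M_\vartheta \geq \id$, hence $\norm{M_\vartheta} \geq 1$ and $\vartheta^{-2}(\norm{M_\vartheta}-1) \geq 0$ for $\vartheta > 0$; this non-negativity is what makes the fraction manipulations below go through.

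For the first assertion I would set $\vartheta = \frac1{1+\sigma\gamma_0}$ and note that the hypothesis $\frac1{1+\sigma\gamma_0} \leq \vartheta'$ is exactly $\vartheta \leq \vartheta'$. Then (ii) gives $0 \leq \vartheta^{-2}(\norm{M_\vartheta}-1) \leq (\vartheta')^{-2}(\norm{M_{\vartheta'}}-1)$; multiplying by $1+2\sigma\gamma_1 > 0$ and adding $1$ keeps the ordering of the (positive) denominators, so
\[
\frac{2\gamma_1}{1 + (1 + 2\sigma\gamma_1)\vartheta^{-2}(\norm{M_\vartheta} - 1)}
\;\geq\;
\frac{2\gamma_1}{1 + (1 + 2\sigma\gamma_1)(\vartheta')^{-2}(\norm{M_{\vartheta'}} - 1)}
\;\geq\; \gamma_0,
\]
where the last inequality is the first hypothesis on $\gamma_0$. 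This chain is precisely~\eqref{eq:precon_strong_convex_gamma} for $\gamma = \gamma_0$ and $M = M_\vartheta$, $\vartheta = \frac1{1+\sigma\gamma_0}$.

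For the second assertion, still with $\vartheta = \frac1{1+\sigma\gamma_0}$, I would take $\gamma > 0$ with $\gamma_0 \leq \gamma \leq \frac{2\gamma_1}{1 + (1+2\sigma\gamma_1)\vartheta^{-2}(\norm{M_\vartheta}-1)}$ and simply invoke (i): from $\gamma \geq \gamma_0$ and $\sigma > 0$ we get $1+\sigma\gamma \geq 1+\sigma\gamma_0 > 0$, hence $\frac1{1+\sigma\gamma} \leq \frac1{1+\sigma\gamma_0} = \vartheta$, which is the claimed implication. It is then worth remarking that consequently $\frac1{1+\sigma\gamma} \leq \vartheta \leq \vartheta'$, so by the same comparison as in the first part every $\gamma$ in the interval $[\gamma_0, \ldots]$ still meets a bound of the form~\eqref{eq:precon_strong_convex_gamma}, i.e.\ the whole interval is usable in the iteration.

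There is no genuine obstacle; the single point requiring care is the orientation of (ii): decreasing $\vartheta$ decreases $\vartheta^{-2}(\norm{M_\vartheta}-1)$ and therefore enlarges the right-hand side of~\eqref{eq:precon_strong_convex_gamma}, so that a bound verified at $\vartheta'$ automatically holds at all smaller $\vartheta$. Together with the feasibility observation $\norm{M_\vartheta} \geq 1$, everything reduces to elementary inequalities.
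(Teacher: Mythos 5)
Your proof is correct and follows essentially the same route the paper takes: first apply $\vartheta$-norm-monotonicity (with $\vartheta = (1+\sigma\gamma_0)^{-1} \leq \vartheta'$) to transfer the bound on $\gamma_0$ from $\vartheta'$ to $\vartheta$, then use the monotone decrease of $\gamma \mapsto (1+\sigma\gamma)^{-1}$ for the implication. The preliminary observation that feasibility forces $\norm{M_\vartheta}\geq 1$ (so the compared quantities are non-negative) is a helpful clarification that the paper leaves implicit.
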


\begin{proof}
  By the choice of $\gamma_0$ and $\vartheta$-norm-monotonicity we
  immediately see that $\gamma_0 \leq 2\gamma_1/\bigl(1 + (1 + 2\sigma\gamma_1)
  \vartheta^{-2}(\norm{M_{\vartheta}} - 1)\bigr)$
  for $\vartheta = (1 + \sigma\gamma_0)^{-1}$,
  giving~\eqref{eq:precon_strong_convex_gamma} as stated. The remaining
  conclusion is just a consequence of the monotonicity of $\gamma
  \mapsto (1 + \sigma\gamma)^{-1}$.
\end{proof}

\begin{remark}
  \label{rem:norm_monotone_iteration}
  The result of Lemma~\ref{lem:strong_convex_gamma_choice} can be used
  as follows. Choose a family of
  feasible preconditioners $(M_\vartheta)_{\vartheta}$
  for $\id + \vartheta^2 \sigma\tau \mK^*\mK$
  which is
  $\vartheta$-norm-monotone with respect to $\sigma\tau\mK^*\mK$.
  Starting with $\vartheta' = 1$ and $\norm{M_1}$,
  choosing $\gamma_0$ according to
  the lemma already establishes~\eqref{eq:precon_strong_convex_gamma} for
  $\gamma = \gamma_0$. However, the corresponding
  $\norm{M_\vartheta}$ yields an improved bound for $\gamma_0$ and a greater
  $\gamma_0$ can be used instead without
  violating~\eqref{eq:precon_strong_convex_gamma}. This procedure
  may be iterated in order to obtain a good acceleration factor $\gamma$.
  Observe that the argumentation remains valid if the norm $\norm{M_\vartheta}$
  is replaced by some estimate $K_\vartheta \geq \norm{M_\vartheta}$
  in Definition~\ref{def:theta_norm_monotone} and
  Lemma~\ref{lem:strong_convex_gamma_choice}.
\end{remark}

\begin{remark}
  Let us discuss feasible preconditioners for
  $T_\vartheta = \id + \vartheta^2T'$, $T' \geq 0$ that
  are $\vartheta$-norm-monotone with respect to $T'$.
  \begin{itemize}
  \item For the Richardson preconditioner, choose $\lambda: [0,1] \to \RR$
    for which $\vartheta \mapsto \vartheta^{-2} \lambda(\vartheta)$
    is monotonically increasing and
    $\lambda(\vartheta) \geq \vartheta^2\norm{T'}$ for each $\vartheta
    \in [0,1]$. Then, $M_\vartheta = (\lambda(\vartheta) + 1)\id$ defines
    a family of feasible preconditioners for $T_\vartheta$ which is
    $\vartheta$-norm-monotone as $\vartheta^{-2}(\norm{M_\vartheta}-1) =
    \vartheta^{-2}\lambda(\vartheta)$
    for each $\vartheta \in {]{0,1}]}$.
  \item
    For the damped Jacobi preconditioner, we see that for $D_\vartheta$
    the diagonal matrix of $T_\vartheta$ we have
    $\norm{D_\vartheta} = 1 + \vartheta^2 \norm{D'}$ where $D'$ is the diagonal
    matrix of $T'$. Furthermore, $T_\vartheta - D_\vartheta =
    \vartheta^2(T' - D')$, so choosing $\lambda: [0,1] \to \RR$ such that
    $\vartheta \mapsto \vartheta^{-2}\lambda(\vartheta)$
    is monotonically increasing and $\lambda(\vartheta) \geq
    \vartheta^2 \norm{T' - D'}$ yields feasible
    $M_\vartheta = (\lambda(\vartheta) + 1)D_\vartheta$. This family
    is $\vartheta$-norm-monotone with respect to $T'$
    as $\vartheta^{-2}(\norm{M_\vartheta}-1) = \vartheta^{-2}\lambda(\vartheta)
    + (\lambda(\vartheta) + 1) \norm{D'}$ is monotonically increasing.
  \item
    For the symmetric Gauss--Seidel preconditioner, we have
    $M_{\vartheta} = (D_\vartheta - E_\vartheta)D_\vartheta^{-1}(D_\vartheta
    - E_\vartheta^*) = T_\vartheta + E_\vartheta D_\vartheta^{-1} E_\vartheta^*$
    where $T_\vartheta = D_\vartheta - E_\vartheta - E_\vartheta^*$, $D_\vartheta$
    is the
    diagonal and $E_\vartheta$ lower diagonal
    matrix of $T_\vartheta$, respectively. Obviously, $D_\vartheta$ and
    $E_\vartheta$ admit the form $D_\vartheta = \id + \vartheta^2 D'$
    and $E_\vartheta = \vartheta^2 E'$ for respective $D'$ and $E'$.
    Observe that
    $\vartheta \mapsto \vartheta^2/(1 + \vartheta^2 d)$ is monotonically
    increasing for each $d \geq 0$. Consequently,
    it follows from
    $\vartheta \leq \vartheta'$ that $\vartheta^2 D_{\vartheta}^{-1}
    \leq (\vartheta')^2 D_{\vartheta'}^{-1}$ and, further that
    \[
    \vartheta^{-2} E_{\vartheta}D_{\vartheta}^{-1}E_\vartheta^* = E'
    \vartheta^2D^{-1}_\vartheta (E')^* \leq
    E' (\vartheta')^2D^{-1}_{\vartheta'} (E')^* =
    (\vartheta')^{-2} E_{\vartheta'}D_{\vartheta'}^{-1}E_{\vartheta'}^*.
    \]
    As $\vartheta^{-2} (T_\vartheta - \id)$ is independent from $\vartheta$,
    we get $\vartheta^{-2}(M_{\vartheta} - \id)
    \leq (\vartheta')^{-2}(M_{\vartheta'} - \id)$.
    In particular, $\vartheta^{-2}(\norm{M_\vartheta} - 1)
    \leq (\vartheta')^{-2}(\norm{M_{\vartheta'}}- 1)$
    which shows that the symmetric
    Gauss--Seidel preconditioners
    $(M_\vartheta)_\vartheta$ constitute a $\vartheta$-norm-monotone
    family.
  \item
    The SSOR methods are in general not $\vartheta$-norm-monotone,
    except for the symmetric Gauss--Seidel case $\omega = 1$,
    as discussed above.
                                                                                                  \end{itemize}
\end{remark}

\section{Numerical experiments}
\label{sec:numerics}

In order to assess the performance of the discussed Douglas--Rachford
algorithms, we performed numerical experiments for two specific
convex optimization problems in imaging: image denoising with
$L^2$-discrepancy and $\TV$ regularization as well as
$\TV$-Huber approximation, respectively.

\subsection{\texorpdfstring{$L^2$-$\TV$ denoising}{L2-TV denoising}}

We tested the basic and the accelerated Douglas--Rachford iteration
for a discrete $L^2$-$\TV$-denoising problem (also called \emph{ROF model})
according to
\begin{equation}
  \label{eq:rof_primal}
  \min_{u \in X} \ \frac{\norm[2]{u - f}^2}{2} + \alpha \norm[1]{\grad u},
\end{equation}
with $X$ the space of functions on a 2D regular rectangular grid and
$\grad: X \to Y$ a forward finite-difference gradient operator with
homogeneous Neumann conditions.
The minimization problem may be written as the
saddle-point problem~\eqref{eq:saddle-point-prob} with
primal variable $x = u \in X$, dual variable $y = p \in Y$,
$\mF(u) = \|u-f\|_{2}^2/2$, $\mG(p) = \mI_{\{ \|p\|_{\infty} \leq \alpha\}}(p)$
and $\mK = \grad$.
The associated dual problem then reads as
\begin{equation}\label{ROF:dual}
\min_{p\in Y} \ \frac{\norm[2]{\Div p + f}^2}{2}
  - \frac{\norm[2]{f}^2}{2} + \mI_{\{\norm[\infty]{p} \leq \alpha\}}(p).
\end{equation}
such that the primal-dual gap, which is used as the stopping
criterion, becomes
\begin{equation}
  \label{eq:l2-tv-gap}
  \gap_{L^2\text{-}\TV}(u,p) = \frac{\norm[2]{u-f}^2}{2}
  + \alpha \norm[1]{\grad u} + \frac{\norm[2]{\Div p + f}^2}{2}
  - \frac{\norm[2]{f}^2}{2} + \mI_{\{\norm[\infty]{p} \leq \alpha\}}(p).
\end{equation}
As $\mF$ is strongly convex, the accelerated
methods~\eqref{eq:accelerated-douglas-rachford_}
and~\eqref{eq:pre:accelerated-douglas-rachford} proposed in this
paper are applicable for solving $L^2$-TV denoising problems, in addition
to the basic methods~\eqref{eq:douglas-rachford_}
and~\eqref{eq:precon-douglas-rachford_}.
The computational
building blocks for the respective implementations are well-known, see
\cite{BS1,BS2,CP1}, for instance.
We compare with first-order
methods which also achieve $\mO(1/k^2)$ convergence rate in some sense,
namely ALG2 from \cite{CP1}, FISTA \cite{BT} and the fast Douglas--Rachford
iteration in \cite{PSB}.
The parameter settings for the $\mO(1/k^2)$-methods are as follows:
\begin{itemize}
\item ALG2: Accelerated primal-dual algorithm
  introduced in \cite{CP1} with parameters
  $\tau_{0} = 1/L$,
  $\tau_{k} \sigma_{k} L^2 = 1$, $\gamma = 0.35$ and $L = \sqrt{8}$.

\item FISTA:  Fast iterative shrinkage thresholding algorithm~\cite{BT}
  on the dual problem~\eqref{ROF:dual}. The Lipschitz constant estimate
  $L$ as in \cite{BT} here is chosen as $L = 8$.

\item FDR: Fast Douglas--Rachford splitting method proposed in \citep{PSB}
  on the dual problem~\eqref{ROF:dual} with parameters
    $L_{f} = 8$, $\gamma = \frac{\sqrt{2}-1}{L_{f}}$, $\lambda_{k} = \lambda = \frac{1-\gamma L_{f}}{1+\gamma L_{f}}$, $\beta_{0} = 0$, and $\beta_{k} = \frac{k-1}{k+2}$ for $k>0$.

\item \ref{eq:accelerated-douglas-rachford_}: Accelerated
  Douglas--Rachford method as in Table \ref{tab:accelerated-douglas-rachford}.
  Here, the discrete cosine transform (DCT) is used for solving
  the discrete elliptic equation with operator $\id
  - \sigma_0 \tau_0 \Delta$.
  The parameters read as
  $L = \sqrt{8}$, $\sigma_{0} = 1$, $\tau_{0} = 15/\sigma_{0}$, $\gamma  = 1/(1 + \sigma_{0}\tau_{0}L^2)$. 
\item \ref{eq:pre:accelerated-douglas-rachford}: Preconditioned accelerated
  Douglas--Rachford method as in Table
  \ref{tab:pre:accelerated-douglas-rachford} with two steps
  of the symmetric Red-Black Gauss--Seidel iteration
  as preconditioner and parameters
  $\sigma_{0} = 1$, $\tau_{0} = 15/\sigma_{0}$, $\gamma_{1} = 1$,
  $L = \sqrt{8}$.
  The norm $\norm{M}$ is estimated as $\norm{M} < \norm[\est]{T}
  + \norm[\est]{M - T}$ with
  $\norm[\est]{T} = 1+\sigma_0\tau_0 L^2$ and
  $\norm[\est]{M - T} = 4(\sigma_0\tau_0)^2/(1+4\sigma_0\tau_0)$.
  The parameter $\gamma$ is set as $\gamma = 2\gamma_1/(\norm[\est]{T}
  + \norm[\est]{M - T})$.
\end{itemize}
We also tested the $\mO(1/k)$ basic Douglas--Rachford
algorithms using the following parameters.
\begin{itemize}
\item \ref{eq:douglas-rachford_}: Douglas-Rachford method as in Table \ref{tab:douglas-rachford}. Again, the discrete cosine transform (DCT) is used
  for applying the inverse of the elliptic operator $\id - \sigma \tau \Delta$.
    Here, $\sigma= 1$, $\tau = 15/\sigma$.

\item \ref{eq:precon-douglas-rachford_}: Preconditioned Douglas-Rachford
  method as in Table \ref{tab:precon-douglas-rachford}. Again, the
  preconditioner is given by   two steps of the symmetric Red-Black Gauss--Seidel iteration.
  The step sizes are chosen as
  $\sigma = 1$, $\tau = 15/\sigma$.
\end{itemize}
Computations were performed for the image \emph{Taj Mahal}
(size 1920$\times$1080
pixels), additive Gaussian noise (noise level $0.1$) and different
regularization parameters
$\alpha$ in~\eqref{eq:rof_primal} using an Intel Xeon CPU (E5-2690v3,
2.6 GHz, 12 cores).
Figure \ref{mahal:denoise} includes the
original image, the noisy image and denoised images with different
regularization parameters.
\begin{figure}  \begin{center}
    \subfloat[Original image]
    {\includegraphics[width=0.49\textwidth]{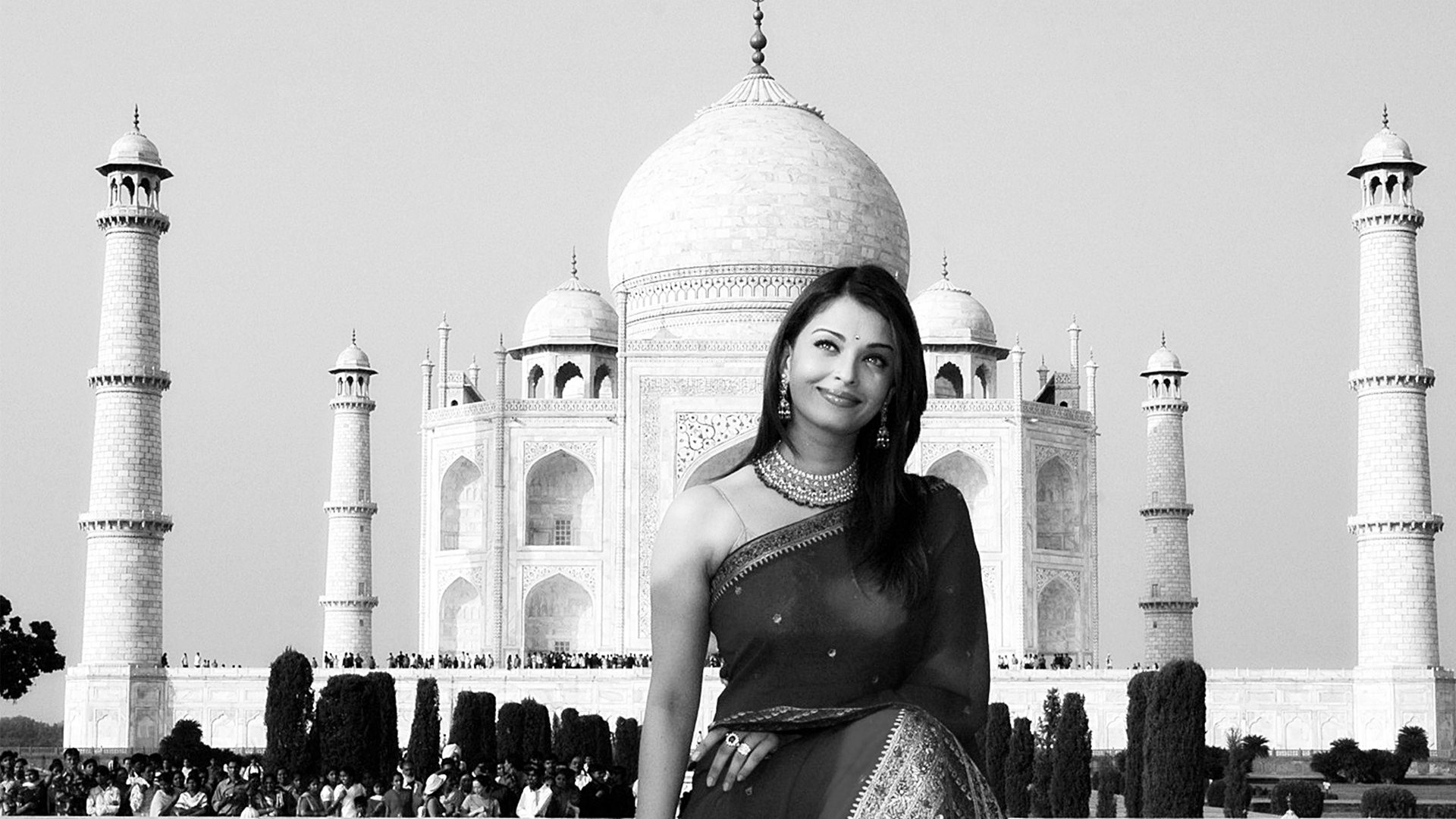}}\
    \subfloat[Noise level: $\sigma = 0.1$]
    {\includegraphics[width=0.49\textwidth]{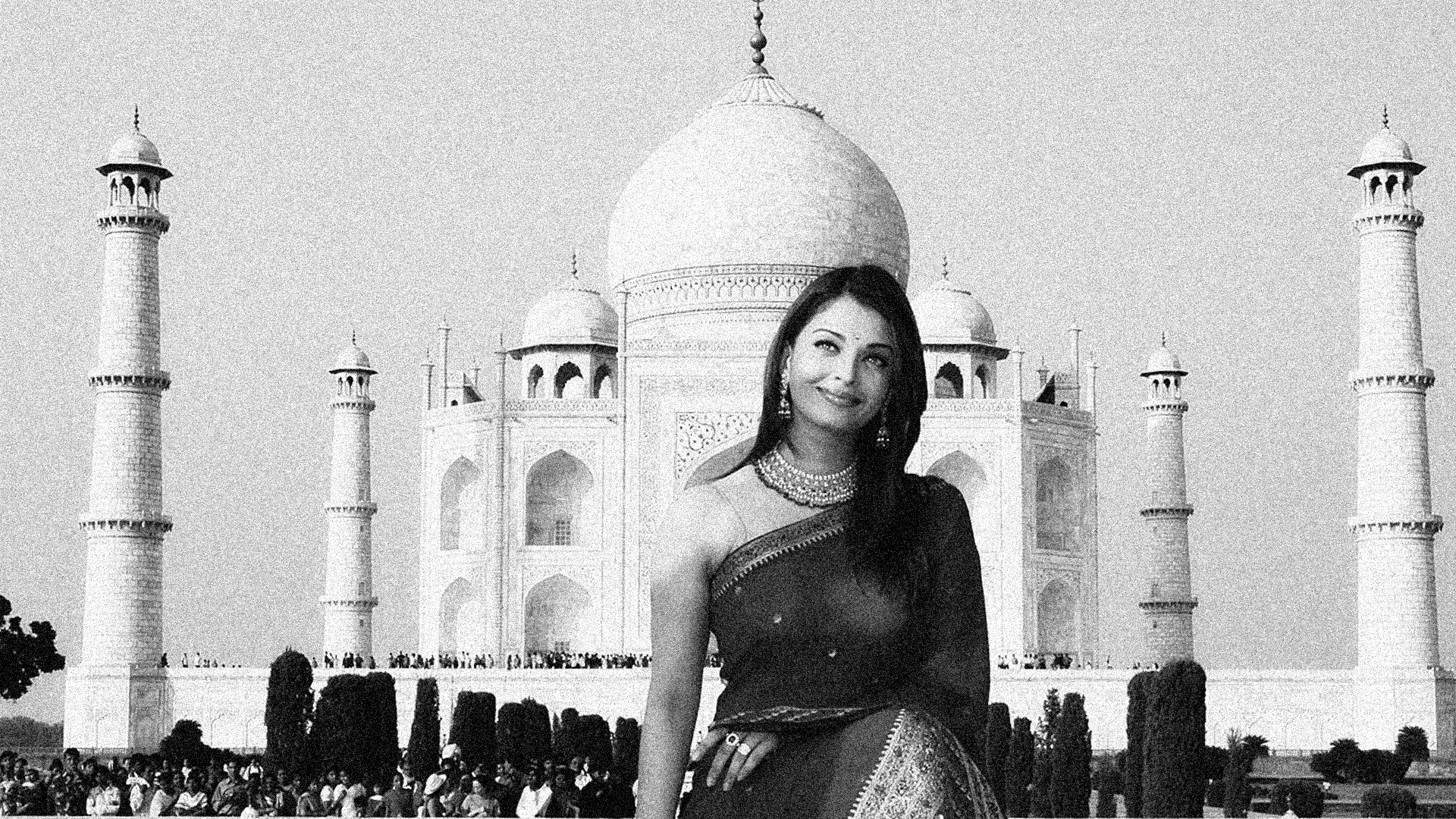}}\\[-0.5em]
    \subfloat[$\alpha = 0.2$]
    {\includegraphics[width=0.49\textwidth]{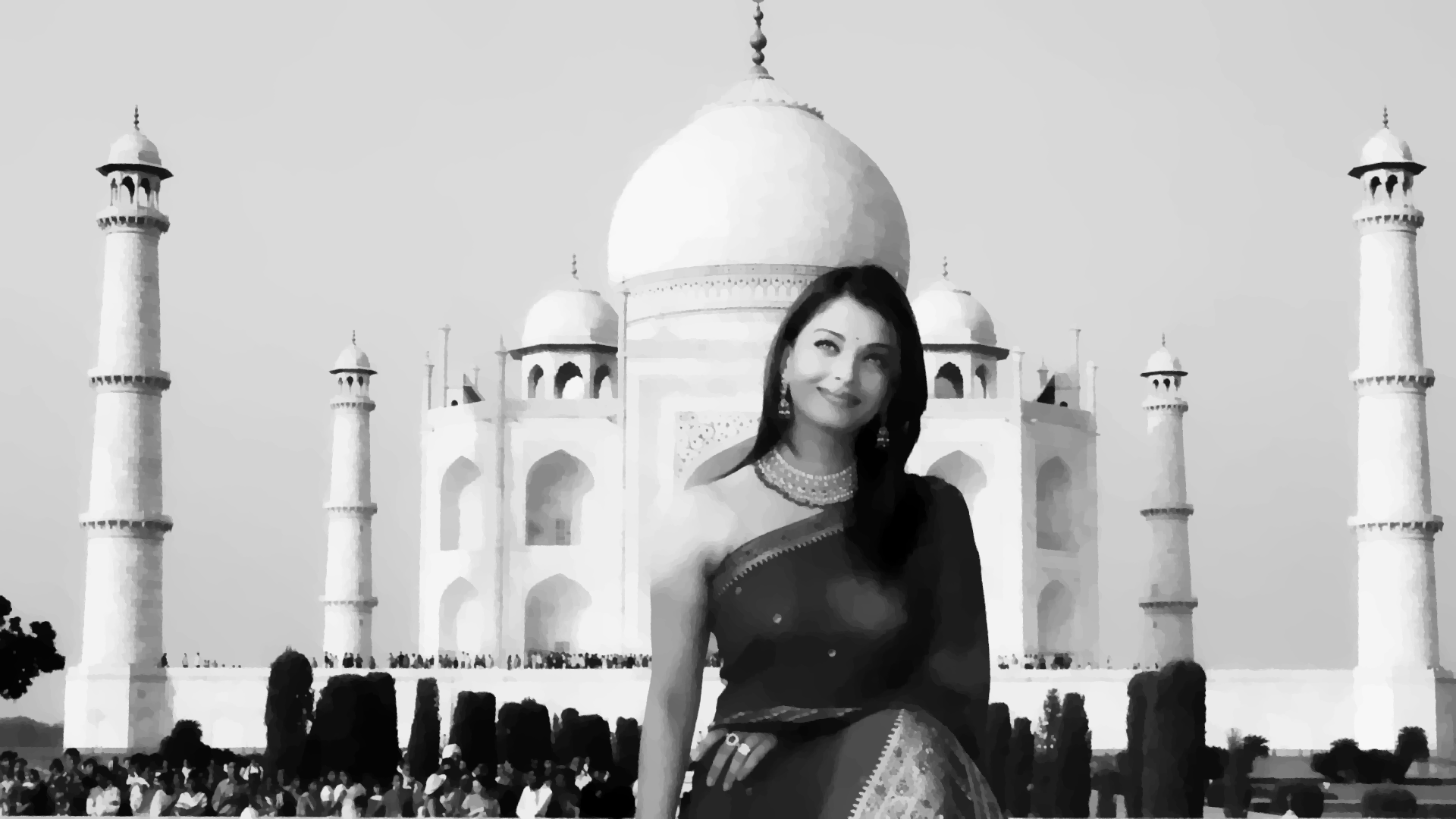}}\
    \subfloat[$\alpha = 0.5$]
    {\includegraphics[width=0.49\textwidth]{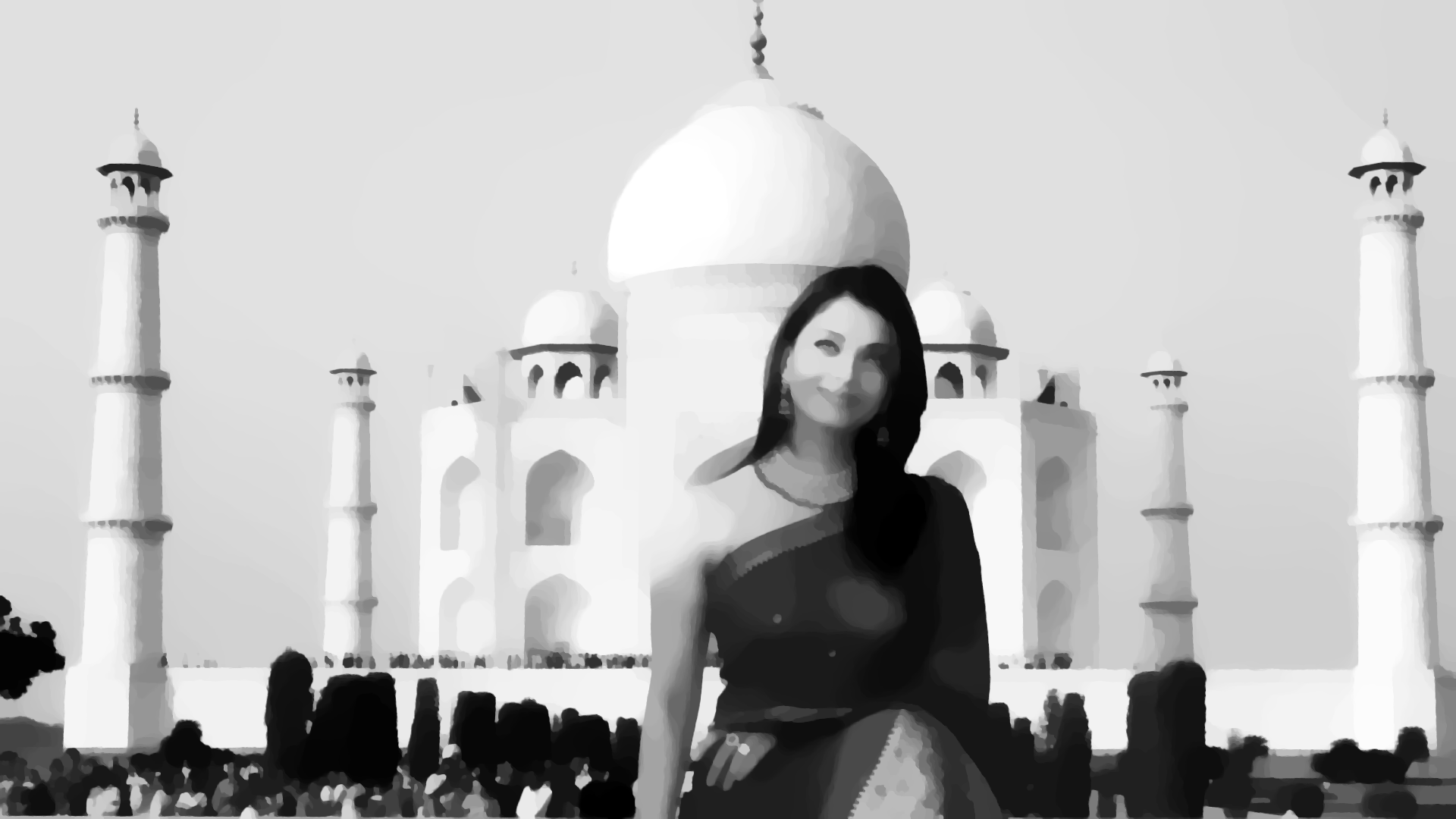}}
  \end{center}
  \vspace*{-0.5em}
  \caption{Results for variational $L^2$-TV denoising.
    All denoised images are obtained with the
    \ref{eq:pre:accelerated-douglas-rachford} algorithm
    which was stopped once the primal-dual gap normalized
    by the number of pixels became less
    than $10^{-7}$. (a) is the original image: \emph{Taj Mahal}
    (1920 $\times$ 1080 pixels, gray). (b) shows a noise-perturbed version
    of (a) (additive Gaussian noise, standard deviation $0.1$), (c)
    and (d) are the denoised images with $\alpha = 0.2$ and $\alpha = 0.5$,
    respectively.}
\label{mahal:denoise}
\end{figure}
It can be seen from Table~\ref{tab:l2tv_denoising} as well as
Figure~\ref{maj:denoise:rate} that the proposed algorithms
\ref{eq:precon-douglas-rachford_} and \ref{eq:pre:accelerated-douglas-rachford}
are competitive both in terms of iteration number and computation time,
especially \ref{eq:pre:accelerated-douglas-rachford}. While in terms of
iteration numbers, \ref{eq:douglas-rachford_} and
\ref{eq:precon-douglas-rachford_} as well as
\ref{eq:accelerated-douglas-rachford_} and
\ref{eq:pre:accelerated-douglas-rachford} perform roughly equally well,
the preconditioned variants massively benefit from the preconditioner
in terms of computation time with each iteration taking only a fraction
compared to the respective unpreconditioned variants.

\begin{table}\centering \begin{tabular}{lr@{\,}r@{\,}lr@{\,}r@{\,}lr@{\,}r@{\,}lr@{\,}r@{\,}l} \toprule
& \multicolumn{6}{c}{$\alpha = 0.2$}
& \multicolumn{6}{c}{$\alpha = 0.5$}\\
\cmidrule{2-7} \cmidrule{8-13}
& \multicolumn{3}{c}{$\varepsilon = 10^{-5}$}
& \multicolumn{3}{c}{$\varepsilon = 10^{-7}$}
& \multicolumn{3}{c}{$\varepsilon = 10^{-5}$}
& \multicolumn{3}{c}{$\varepsilon = 10^{-7}$}\\
\cmidrule{1-13} ALG2&& 134&(1.52s)  && 810&(8.77s) && 368&(3.99s) && 2174&(23.17s)\\
FISTA &&221&(2.80s) && 1659&(20.75s) && 812&(10.14s) && 4534&(56.27s)\\
FDR &&730&(74.39s) &&4927&(494.95s) &&2280&(221.76s) && 12879&(1245.19s)\\
\ref{eq:accelerated-douglas-rachford_}&& 68&(6.00s)  && 357&(31.11s) && 104&(9.24s) && 757&(66.22s)\\
\ref{eq:pre:accelerated-douglas-rachford}&& 75&(1.30s)  && 369&(6.15s) && 128&(2.15s) && 832&(13.70s)\\
\midrule
\ref{eq:douglas-rachford_}  && 57&(5.41s)  && 596&(54.45s) && 133&(12.50s) && 3067&(288.27s)\\
\ref{eq:precon-douglas-rachford_}  && 65&(1.07s)  && 638&(10.06s) && 142&(2.26s) && 3071&(47.87s)\\
\bottomrule \end{tabular}
\vspace*{-0.5em}
\caption{
  Numerical results for the $L^2$-TV image denoising (ROF) problem
  \eqref{eq:rof_primal} with noise level $0.1$ and regularization parameters
  $\alpha = 0.2$, $\alpha = 0.5$. For all algorithms, we use the pair
  $k(t)$ to denote the iteration number $k$ and CPU time cost $t$.
  The iteration is performed until the primal-dual gap~\eqref{eq:l2-tv-gap}
  normalized by the number of pixels
  is below $\varepsilon$.}
\label{tab:l2tv_denoising}
\end{table}

\begin{figure}\begin{center}
\subfloat[Convergence with respect to iteration number.]
{\includegraphics[width=0.49\textwidth,page=1]{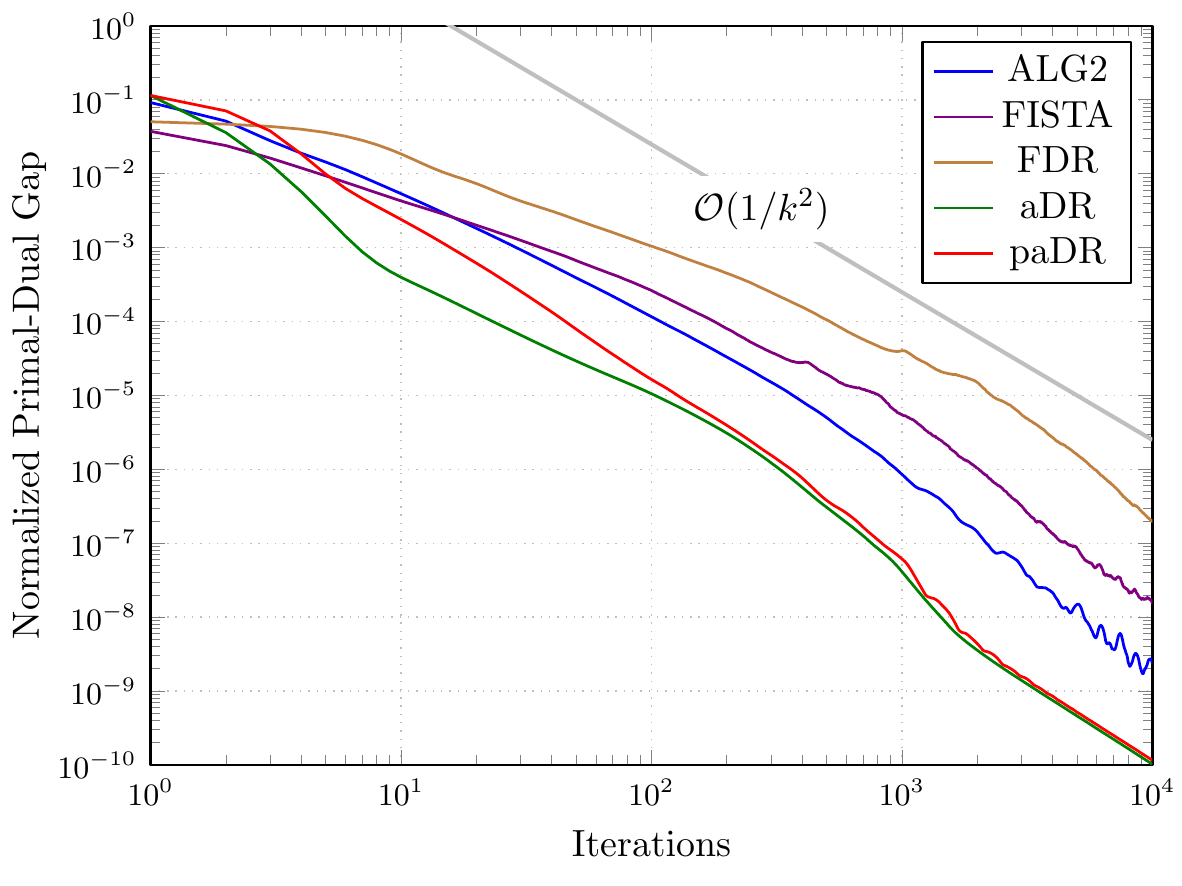}}\hfill
\subfloat[Convergence with respect to computation time.]
{\includegraphics[width=0.49\textwidth,page=2]{figures.pdf}}
\end{center}
\caption{Numerical convergence rate compared with iteration number and
  iteration time for the ROF model with regularization parameter
  $\alpha = 0.5$. The plot in (a) utilizes a double-logarithmic scale
  while the plot in (b) is semi-logarithmic.}
\label{maj:denoise:rate}
\end{figure}

\subsection{\texorpdfstring{$L^2$-Huber-$\TV$ denoising}{L2-Huber-TV denoising}}
There are several ways to approximate the
non-smooth $1$-norm term in~\eqref{eq:rof_primal} by a smooth
functional. One possibility is employing the so-called
\emph{Huber loss function} instead of the Euclidean vector norm,
resulting in
\[
\norm[\alpha,\lambda]{p} = \sum_{i \in \text{pixels} }
\abs[\alpha,\lambda]{p_i}, \qquad
\abs[\alpha,\lambda]{p_i} =
\begin{cases}
\frac{|p_i|^2}{2\lambda} & \text{if} \  |p_i| \leq \alpha \lambda, \\
\alpha |p_i| - \frac{\lambda \alpha^2}{2} & \text{if} \ |p_i| > \alpha\lambda,
\end{cases}
\]
and the associated $L^2$-Huber-$\TV$ minimization problem
\begin{equation}
  \label{eq:huber_primal}
  \min_{u \in X} \ \frac{\norm[2]{u - f}^2}{2} +
  \norm[\alpha,\lambda]{\grad u}.
\end{equation}
An appropriate saddle-point formulation can be obtained from the
saddle-point formulation for the ROF model by replacing $\mG$ by
the functional
$\mG(p) =  \mI_{\{ \|p\|_{\infty} \leq \alpha\}}(p)
+ \frac{\lambda}{2}\|p\|_{2}^2$.
The primal-dual gap
consequently reads as
\begin{equation}
  \label{eq:l2-tv-huber-gap}
  \gap_{\text{$L^2$-Huber-$\TV$}}(u,p) = \frac{\norm[2]{u-f}^2}{2}
  + \|\grad u\|_{\alpha, \lambda} + \frac{\norm[2]{\Div p + f}^2}{2}
  - \frac{\norm[2]{f}^2}{2} + \mI_{\{ \|p\|_{\infty} \leq \alpha\}}
  + \frac{\lambda}{2}\|p\|_{2}^2.
\end{equation}
Since both $\mF$ and $\mG$ are strongly convex functions with respective moduli $\gamma_1 = 1$ and $\gamma_2 = \lambda$, we can use
\eqref{eq:acc-douglas-rachford-sc_} and~\eqref{eq:pre:acc-douglas-rachford-sc_}
to solve the saddle-point problem.
Additionally, we compare to ALG3 from \cite{CP1}, a solution
algorithm for the
same class of strongly convex saddle-point problems.
All algorithms admit an $\mO(\vartheta^k)$-convergence rate for the
primal-dual gap.

\begin{itemize}
\item ALG3:   Accelerated primal-dual algorithm ALG3 for strongly convex
  saddle-point problems as in \cite{CP1}, using parameters
  $\gamma=1$, $\delta = \lambda$, $\mu = 2\sqrt{\gamma \delta}/L$,
  $L = \sqrt{8}$, $\vartheta= 1/(1+\mu)$.

\item \ref{eq:acc-douglas-rachford-sc_}: Accelerated Douglas--Rachford
  method for strongly convex saddle-point problems as in Table
  \ref{tab:acc-douglas-rachford-sc}, using parameters
  $\gamma_{1} = 1$, $\gamma_{2} = \lambda$, $\sigma = 0.2$,
  $\tau = \sigma \gamma_{1}/\gamma_{2}$,
  $\gamma =  2\gamma_1/\bigl(1 + (1 + 2\sigma\gamma_1)\sigma\tau L^2 \bigr)$
  with $L = \sqrt{8}$.

\item \ref{eq:pre:acc-douglas-rachford-sc_}:   Preconditioned accelerated Douglas--Rachford method for strongly convex
  saddle-point problems as in Table \ref{tab:pre:acc-douglas-rachford-sc-pre}.
  Computing three steps of the symmetric Red-Black Gauss--Seidel method
  is employed as preconditioner.
  The parameters are chosen as:  $\gamma_{1} = 1$,
  $\gamma_{2} = \lambda$, $\sigma = 0.15$, $\tau = \sigma \gamma_{1}/\gamma_{2}$,
  $L = \sqrt{8}$. The values $\vartheta$ and
  $\gamma$ are obtained by the procedure
  outlined in Remark~\ref{rem:norm_monotone_iteration}:
  Setting $\vartheta = 1$ first, then performing
  10 times the update       $\norm[\est]{T} \leftarrow 1 + \vartheta^2\sigma\tau L^2$,
  $\norm[\est]{M - T} 
  \leftarrow 4\vartheta^4(\sigma\tau)^2/(1 + 4\vartheta^2\sigma\tau)$,
  $\gamma \leftarrow 2\gamma_1/\bigl(1 + (1 + 2\sigma\gamma_1)\vartheta^{-2}(\norm[\est]{T} 
  + \norm[\est]{M - T}  - 1)\bigr)$, 
  $\vartheta \leftarrow 1/(1 + \sigma\gamma)$.
\end{itemize}
Numerical experiments have been performed for the image
\emph{Man} (1024$\times$1024 pixels, gray) which has been contaminated
with additive Gaussian noise (noise level $0.25$) using the same CPU
as for the $L^2$-$\TV$-denoising experiments. The results for
the parameters $\alpha = 1.0$ and $\lambda = 0.05$ are depicted in
Figure~\ref{fig:man_denoise}.
\begin{figure}\begin{center}
\subfloat[Original image]
{\includegraphics[width=0.24\textwidth]{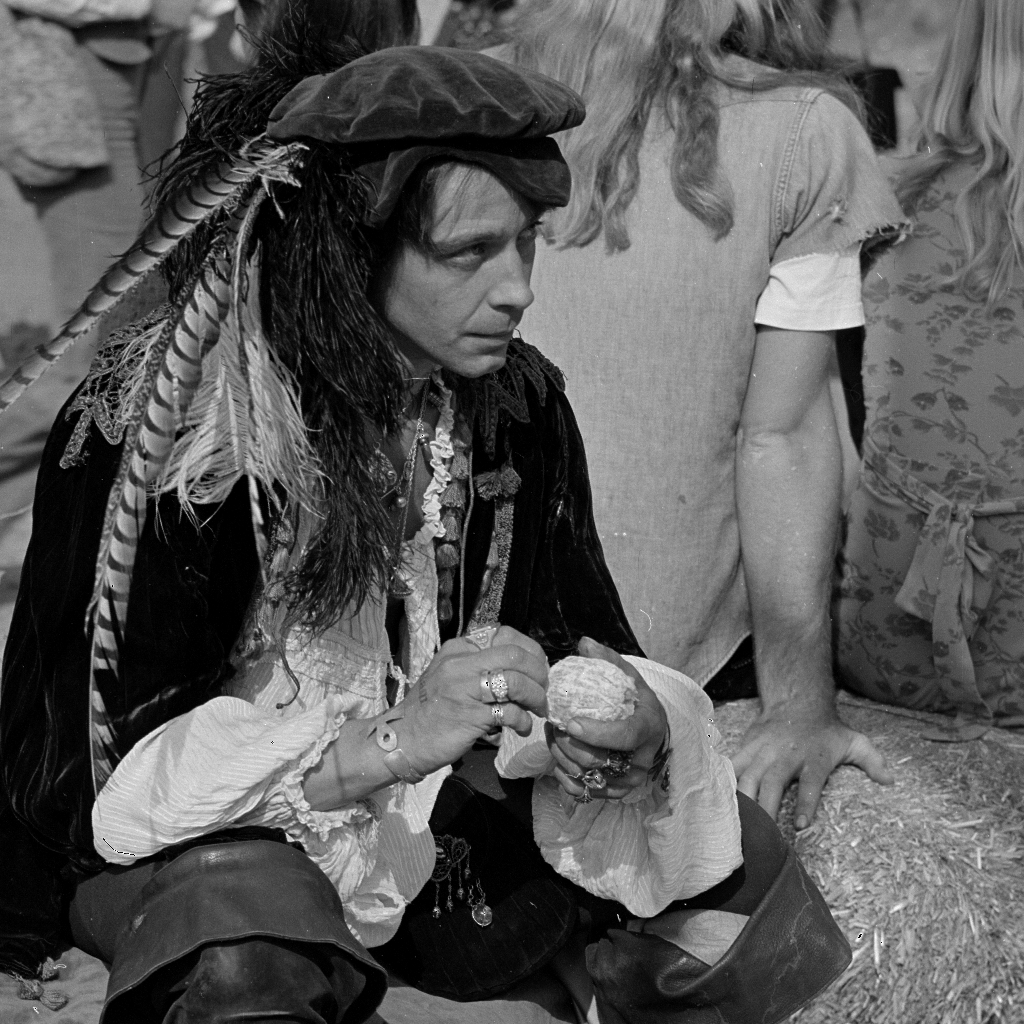}}\
\subfloat[Noise level: $\sigma = 0.25$]
{\includegraphics[width=0.24\textwidth]{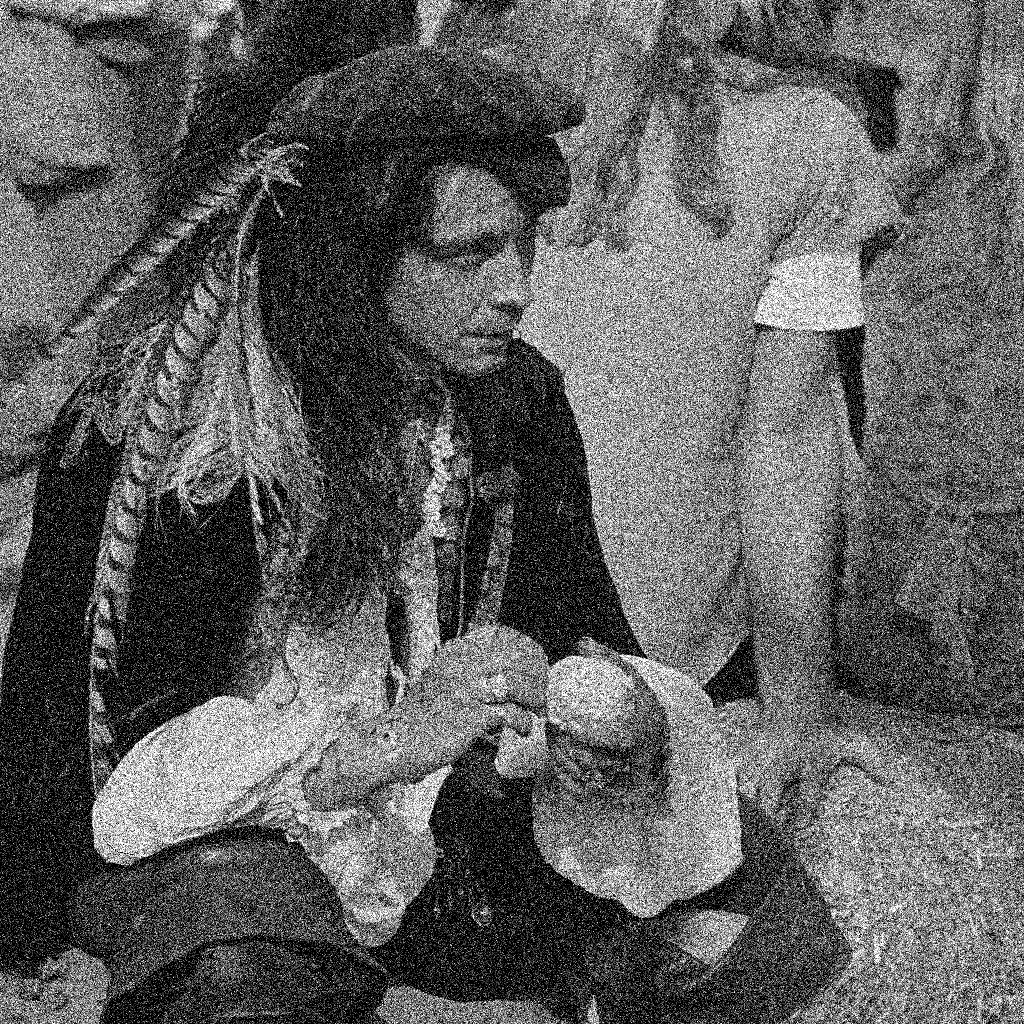}}\
\subfloat[ALG3]
{\includegraphics[width=0.24\textwidth]{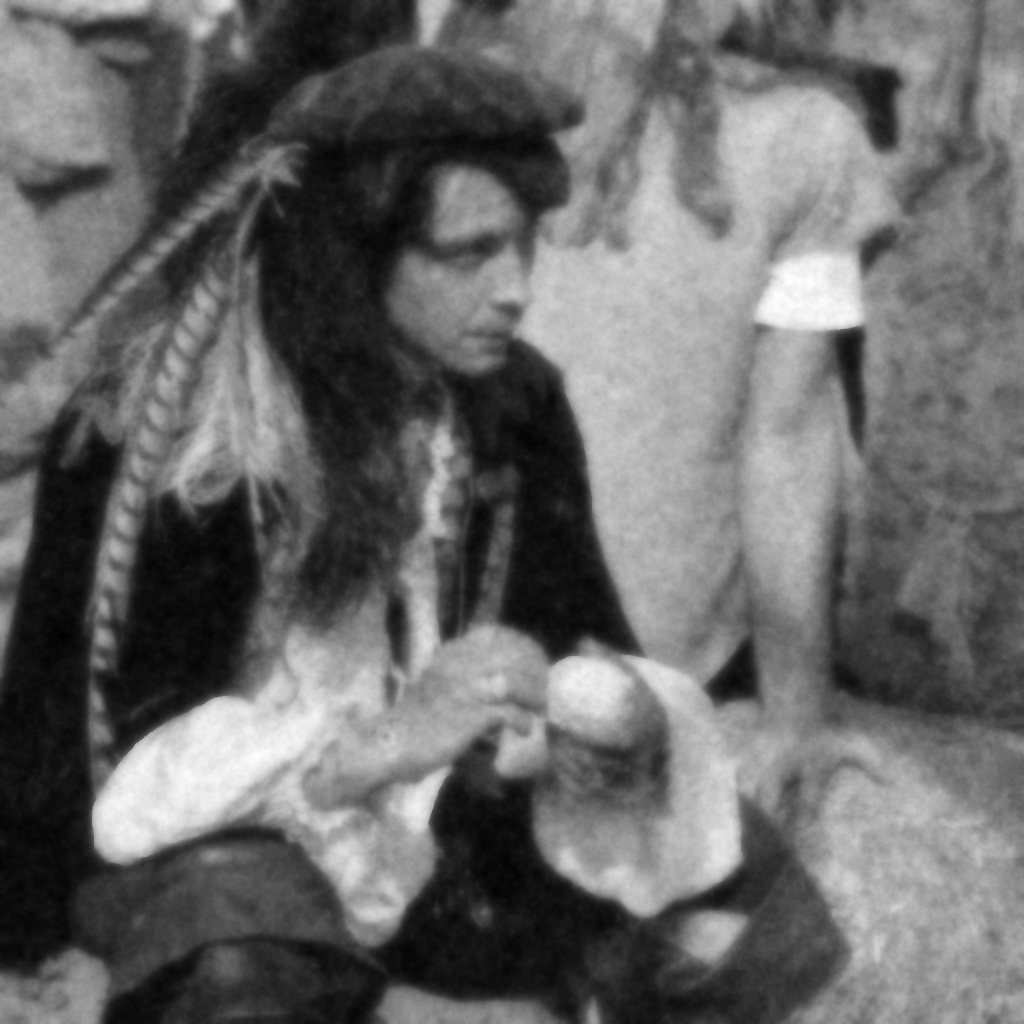}}\
\subfloat[paDR$^{sc}$]
{\includegraphics[width=0.24\textwidth]{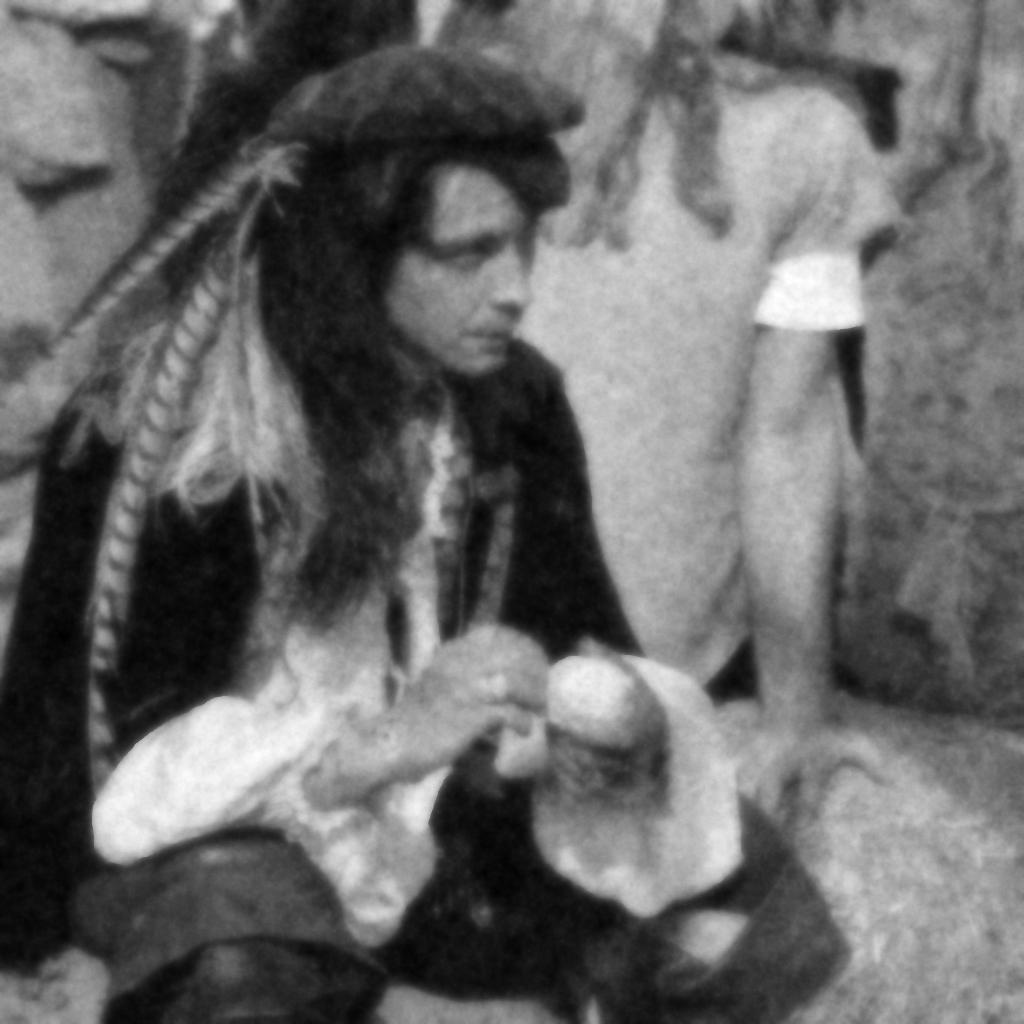}}
\end{center}
\caption{Results for variational $L^2$-Huber-$\TV$ denoising.
  (a) is the original image:
  \emph{Man} (1024 $\times$ 1024 pixels, gray).
  (b) shows a noise-perturbed version of (a) (additive Gaussian noise,
  standard deviation $0.25$), (c) and (d) are the denoised images
  with $\alpha = 1.0$ and
  $\lambda = 0.05$ obtained with algorithm ALG3 and
  \ref{eq:pre:acc-douglas-rachford-sc_}, respectively.
  The iteration was stopped as soon as the primal-dual
  gap normalized by the number of pixels was less than $10^{-14}$.}
\label{fig:man_denoise}
\end{figure}
The table (a) in Figure~\ref{fig:huber} indicates that
\ref{eq:pre:acc-douglas-rachford-sc_} is competitive in comparison
to a well-established fast algorithm suitable for the same class of
saddle-point problems with speed benefits again coming from the
preconditioner. Figure~\ref{fig:huber} (b) moreover shows that the
observed geometric reduction factor for the primal-dual gap is
lower for \ref{eq:acc-douglas-rachford-sc_} and
\ref{eq:pre:acc-douglas-rachford-sc_} than for
ALG3, with the former being roughly equal.
\begin{figure}
  \centering   \subfloat[Comparison of iteration numbers and CPU time cost.]{  \begin{minipage}{0.58\linewidth}    \vspace*{1em}    \begin{tabular}{l@{\,}r@{\,}l@{}r@{\,}l@{}r@{\,}l@{}r@{\,}l}       \toprule
      \multicolumn{9}{c}{$\alpha = 0.05$, $\lambda = 1.0$} \\
      \cmidrule{2-9}    & \multicolumn{2}{c}{\!$\varepsilon = 10^{-8}$\!}
   & \multicolumn{2}{c}{\!$\varepsilon = 10^{-10}$\!}
   & \multicolumn{2}{c}{\!$\varepsilon = 10^{-12}$\!}
   & \multicolumn{2}{c}{\!$\varepsilon = 10^{-14}$\!\!\!}\\
      \cmidrule{1-9}       \!\!ALG3 &82&(0.48s) & 111&(0.51s) & 140&(0.65s)  & 169&(0.78s)\\       \!\!aDR$^{sc}$  &43&(2.00s)& 56&(2.81s) & 71&(3.30s)  & 94&(4.36s)\\
      \!\!paDR$^{sc}$ &44&(0.35s)& 60&(0.47s) & 76&(0.60s)  & 92&(0.72s)\\
      \bottomrule     \end{tabular}
                                                                                        \vspace*{1em}  \end{minipage}}
    \subfloat[Convergence with respect to iteration number.]{
  \begin{minipage}{0.41\linewidth}
    \includegraphics[width=\textwidth,page=3]{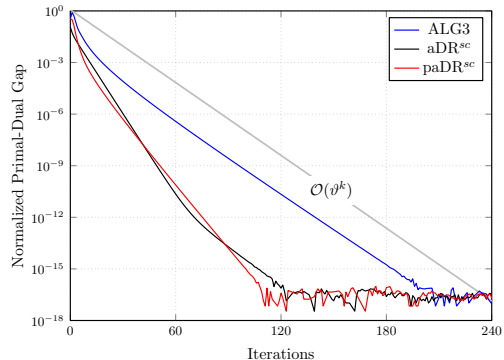}    \vspace*{-0.5em}  \end{minipage}}
    \caption{Convergence results for the $L^2$-Huber-$\TV$ denoising problem.
                          }   \label{fig:huber}
\end{figure}

\section{Summary and conclusions}
\label{sec:summary}

We introduced and studied
novel accelerated algorithms of Douglas--Rachford type
for the solution of structured convex-concave saddle-point problems.
They base on splitting the linear operator and the subgradient operators
in the optimality condition, leading to linear solution steps as well as
proximal mappings in the respective iterations. The acceleration strategies
extensively make use of this specific splitting and rely on strong convexity
assumptions, leading to the same optimal rates that have previously been
reported in the literature, see Table~\ref{tab:summary} for a detailed
overview. All accelerated algorithms may flexibly be preconditioned such
that the corresponding linear iteration step becomes fast and easy to
compute. Numerical experiments indicate that by using
suitable preconditioners such as the symmetric Red-Black
Gauss--Seidel iteration, the proposed algorithms are competitive with
respect to the state of the art for first-order proximal methods. In
particular, considerable speed improvements may be achieved for non-smooth
variational
image-denoising problems for images in the megapixel regime.
Future directions of research may include an extension of the framework to
overrelaxed or inertial variants, the development of adaptive strategies
for step-size parameters update as well as an integration of
instationary preconditioners such as the conjugate gradient (CG) method
into the iteration.

\begin{table}
  \centering
  \begin{tabular}{lllll}
    \toprule
    Method & Prerequisites & Quantity & Type & Rate \\
    \midrule
    Basic~\eqref{eq:douglas-rachford_}/    Preconditioned~\eqref{eq:precon-douglas-rachford_}
           &--- & $(x^k,y^k)$ & weak \\
                & & $\gap_{X_0 \times Y_0}$ & non-ergodic & $\mo(1)$ \\
           & & $\gap_{X_0 \times Y_0}$ & ergodic & $\mO(1/k)$ \\[\medskipamount]
    Accelerated~\eqref{eq:accelerated-douglas-rachford_}/\eqref{eq:pre:accelerated-douglas-rachford}
           & $\mF$ strongly & $\norm{x^k - x^*}^2$
                                      & strong & $\mO(1/k^2)$ \\
           & convex & $y^k$ & weak \\
           & & $\gap_{X_0 \times Y_0}$ & non-ergodic & $\mo(1)$ \\
           & & $\err^p_{Y_0}$ & non-ergodic & $\mo(1/k)$ \\
           & & $\gap_{X_0 \times Y_0}$ & ergodic & $\mO(1/k^2)$ \\
           & & $\err^d$ & ergodic & $\mO(1/k^2)$ \\[\medskipamount]
    Accelerated~\eqref{eq:acc-douglas-rachford-sc_}/\eqref{eq:pre:acc-douglas-rachford-sc_}
           & $\mF$, $\mG$ strongly & $\norm{x^k - x^*}^2$
                                      & strong & $\mo(\vartheta^{k})$ \\
           & convex &  $\norm{y^k - y^*}^2$
                                      & strong & $\mo(\vartheta^{k})$ \\
           & & $\gap$ & non-ergodic & $\mo(\vartheta^{k/2})$ \\
           & & $\gap$ & ergodic & $\mO(\vartheta^k)$ \\
    \bottomrule
  \end{tabular}
  \caption{Summary of the convergence properties of the
    discussed Douglas--Rachford iterations.     Each
    convergence property
    for the non-ergodic sequences also holds for the ergodic sequences
    except for~\eqref{eq:acc-douglas-rachford-sc_}
    and~\eqref{eq:pre:acc-douglas-rachford-sc_} where only
    $\norm{x_{\erg}^k - x^*}^2 = \mO(\vartheta^k)$ and
    $\norm{y_{\erg}^k - y^*}^2 = \mO(\vartheta^k)$ can be obtained.
          }
  \label{tab:summary}
\end{table}

\bibliographystyle{plain}
\bibliography{paper}

\end{document}